\theoremstyle{definition}
\newtheorem{thm}{Theorem}[section]
\newtheorem{definition}[thm]{Definition}
\newtheorem{prop}[thm]{Proposition}
\newtheorem{lem}[thm]{Lemma}
\newtheorem{rem}[thm]{Remark}
\newtheorem{cor}[thm]{Corollary}
\newtheorem{ex}[thm]{Example}
\newtheorem{ques}[thm]{Question}
\newtheorem*{ack}{Acknowledgement}
\newtheorem*{DA}{Data availability}
\newtheorem*{COI}{Conflict of interest}
\numberwithin{equation}{section}
\newcommand{\Z}{\mathbb{Z}}
\newcommand{\Q}{\mathbb{Q}}
\newcommand{\Spec}{\operatorname{Spec}}
\newcommand{\Frac}{\operatorname{Frac}}
\newcommand{\Aut}{{\rm Aut}}
\newcommand{\uAut}{\underline{{\rm Aut}}}
\newcommand{\Dim}{\operatorname{dim}}
\newcommand{\End}{{\rm End}}
\newcommand{\GL}{\mathrm{GL}}
\newcommand{\SL}{\mathrm{SL}}
\newcommand{\fm}{\mathfrak{m}}
\newcommand{\Ker}{\operatorname{Ker}}
\newcommand{\im}{\operatorname{Im}}
\newcommand{\Det}{\operatorname{det}}
\newcommand{\scrO}{\mathscr{O}}
\newcommand{\calG}{\mathcal{G}}
\newcommand{\cB}{\mathcal{B}}
\newcommand{\Def}{\overset{{\rm def}}{=}}
\newcommand{\fppf}{{\rm fppf}}
\newcommand{\frkm}{\mathfrak{m}}
\newcommand{\frkp}{\mathfrak{p}}
\newcommand{\frkP}{\mathfrak{P}}
\newcommand{\frkM}{\mathfrak{M}}
\newcommand{\G}{\mathbb{G}}
\newcommand{\F}{\mathbb{F}}
\newcommand{\A}{\mathbb{A}}
\newcommand{\bfD}{\mathbf{D}}
\newcommand{\Der}{\operatorname{Der}}
\newcommand{\lto}{\longrightarrow}
\newcommand{\Lie}{\operatorname{Lie}}
\begin{document}

\title[Rationality for group schemes]{On retract rationality for \\
finite connected group schemes}
\author[
S. Otabe
]{
Shusuke Otabe
}
\address{Graduate School of Engineering, Nagoya Institute of Technology, Nagoya, Aichi, Japan}
\email{shusuke.otabe@nitech.ac.jp}

\date{\today}

\keywords{retract rationality, finite group schemes, Witt algebras, positive characteristic}
\subjclass{14E08, 14L15, 17B50}

\thanks{}

\begin{abstract}
In the present paper, we prove the retract rationality of the classifying spaces $BG$ for several types of finite connected group schemes $G$ over algebraically closed fields of positive characteristic $p>0$. In particular, we prove the retract rationality for the finite simple group schemes $G$ associated with the generalized Witt algebras $W(m;\bm{n})$ in the case when $\bm{n}=\bm{1}$ or $m=1$. To this end, we study the automorphism group schemes of the generalized Witt algebras and establish triangulations for them. Moreover, we extend the notion of Witt--Ree algebra to general base rings and discuss their properties. 
\end{abstract}

\maketitle

\thispagestyle{empty}



\section{Introduction}

In \cite{saltman}, Saltman provided the first counter-example for Noether's problem over \textit{algebraically closed} base fields. He proved that for any algebraically closed field $k$ of characteristic $p\ge 0$ and any prime number $\ell$ with $\ell\neq p$, there exists a finite group $G$ of order $\ell^9$ whose classifying space $B_kG$ is not retract rational. Although Noether's problem for finite groups generally has such a negative answer, it is usually challenging to determine the stable birational type of the classifying space $B_kG$ for a given finite group $G$. In fact, it is open whether or not any finite non-abelian simple group has stably rational classifying space. 

\begin{ques}(cf.\cite[Hypothesis 1.1]{BMP}\cite{Kunyavskii})
For any finite non-abelian simple group $G$, is the classifying space $B_{\mathbb{C}}G$ stably rational? 
\end{ques}

In the present paper, we discuss Noether's problem for finite {\it connected} group schemes in positive characteristic $p>0$ and prove the retract rationality of the classifying spaces in several cases. A negative answer to Noether's problem for finite connected group schemes is first given by Scavia in \cite{Scavia} over non-algebraically closed base fields. Up to the author's knowledge, no counter-example to Noether's problem is known for finite connected group schemes over {\it algebraically closed} fields. More precisely, the author does not know whether there exists a finite connected group scheme $G$ over an algebraically closed field of characteristic $p>0$ for which the classifying space $BG$ is not retract rational. As the motivation of the present work comes from this observation, we will stick to the case when the base field is algebraically closed. 
Let $k$ be an algebraically closed field of characteristic $p>0$. 
We will see that several basic types of finite connected group schemes $G$ over $k$ have {retract rational} classifying spaces $BG$. Although the general case where $G$ is an arbitrary connected group scheme of finite type (not necessarily finite) should be discussed (cf.\ \cite[Introduction]{Mer}), the present paper focuses on the finite case.  

In Section \ref{sec:trigonalizable}, we consider the case where $G$ is a finite solvable $k$-group scheme. As a main result, we will prove the following result, which says that the classifying space $BG$ is always retract rational for a large class of finite connected solvable group schemes.

\begin{thm}(cf.\,Theorem \ref{thm: ret rat trig})\label{thm-int: ret rat trig}
Let $G$ be a finite $k$-group scheme. If $G$ is trigonalizable, the associated classifying space $BG$ is retract rational over $k$. 
\end{thm}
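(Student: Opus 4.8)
The plan is to proceed by induction on the order of $G$, using the structure theory of trigonalizable group schemes. Recall that a finite $k$-group scheme $G$ is \emph{trigonalizable} precisely when it is an iterated extension of copies of $\mu_{p^n}$'s, $\Z/p\Z$ (more precisely, finite constant $p$-groups, or rather finite étale $p$-groups with trivial Galois action since $k$ is algebraically closed — so just powers of $\Z/p$), $\alpha_p$, and $\G_m$-type diagonalizable pieces; equivalently $G$ sits in an exact sequence $1\to U\to G\to D\to 1$ where $D$ is diagonalizable (a finite subgroup scheme of a torus) and $U$ is unipotent, and moreover $G$ itself embeds into an upper-triangular matrix group (this is essentially the definition/characterization of trigonalizable, cf.\ the references to be cited). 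So the key reduction is: choose a normal subgroup scheme $N\trianglelefteq G$ with $G/N$ \emph{one} of the elementary pieces above ($\mu_{p^n}$, $\alpha_p$, or $\Z/p$), and relate $BG$ to $BN$ and $B(G/N)$.

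First I would set up the \textbf{no-name-type lemma} for classifying spaces: if $1\to N\to G\to Q\to 1$ is exact and $V$ is a generically free linear representation of $G$, then $V/N$ carries a $Q$-action and $V/G = (V/N)/Q$; if I can arrange that $V\to V/N$ is (generically, or after shrinking) a $Q$-equivariant vector bundle — equivalently that $BN_k$ is "rational with a $Q$-action in a compatible way" — then $BG$ is stably birational to $B Q$ times affine space, hence one reduces the retract rationality of $BG$ to that of $BQ$. The cleanest form: there is a $G$-stable open $U\subseteq V$ such that $U\to U/N$ is a torsor under $N$, and since each elementary $N$ of the above type has the property that its torsors are Zariski-locally trivial over a smooth base (this is the crucial input — $\mu_{p^n}$-torsors are classified by $\G_m/\G_m^{p^n}$ via Kummer theory, $\alpha_p$- and $\Z/p$-torsors by Artin–Schreier-type sequences, all of which give Zariski-locally trivial torsors because the relevant $H^1_{\et}$ or $H^1_{\fppf}$ is computed by a quotient of $\G_a$ or $\G_m$), the quotient map $U\to U/N$ is a Zariski-locally trivial fibration with fibers $\cong N\times\A^?$... actually more carefully: $U\to U/N$ is an $N$-torsor, and for these $N$ the torsor is \emph{generically trivial} (even Zariski-locally trivial since the base, an open in $V/N$, is a smooth rational variety and these torsors are classified by line bundles / additive cohomology which vanish generically), so $U$ is birational to $(U/N)\times N$, and as $N$ is rational, $U$ — hence $V$, hence $BG_k$ — is birational to $(U/N)\times\A^{\dim N}$, i.e.\ stably birational to $V/N$ with its residual $Q=G/N$-action. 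Iterating, $BG$ is stably birational to a point, which is even better than retract rational — but the subtlety is keeping the \emph{residual action} generically free at each stage so that "$V/N$ with $Q$-action" really is a model for $BQ$; here one uses the standard trick of enlarging $V$ by a summand on which $G$ acts through $Q$ faithfully and linearly, so that $V/N\times W$ has generically free $Q$-action.

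The \textbf{main obstacle} is the $\mu_{p^n}$ and $\alpha_p$ steps, i.e.\ the infinitesimal/diagonalizable part, where "torsor" means \emph{fppf} torsor, not étale, so one cannot blindly invoke Hilbert 90 or constant-group arguments. For $\mu_{p^n}$ one uses that $H^1_{\fppf}(X,\mu_{p^n}) = \mathrm{Pic}(X)[p^n]\oplus \sO_X(X)^*/(\sO_X(X)^*)^{p^n}$ (Kummer), so over a rational smooth $X$ with trivial Picard (an open subset of affine space, after localizing) the torsor is trivial — hence $U\to U/\mu_{p^n}$ has a rational section and $U$ is birational to $(U/\mu_{p^n})\times\mu_{p^n}$; since $\mu_{p^n}$ is a closed subscheme of $\G_m=\A^1\setminus\{0\}$, in particular rational as a scheme, we get the stable-birationality reduction. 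For $\alpha_p$ one uses $H^1_{\fppf}(X,\alpha_p)=\ker(F\colon H^1(X,\sO)\to H^1(X,\sO))\oplus\mathrm{coker}(\cdots)$ on $\sO_X(X)$, again vanishing over rational $X$, and $\alpha_p\subset\G_a=\A^1$ is rational. The remaining bookkeeping obstacle is that the \emph{composition factors} of a trigonalizable $G$ need not split off as direct factors, so one genuinely needs the extension-by-extension induction with the residual-action control; and one should double-check that "trigonalizable" gives a filtration whose successive quotients are exactly among $\{\mu_{p^n},\alpha_p,\Z/p\Z\}$ (or small diagonalizable/unipotent pieces), invoking the classification of finite commutative — and then the definition of trigonalizable for the non-commutative — $k$-group schemes. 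Once the inductive step is in place, the conclusion "$BG$ stably rational, a fortiori retract rational" follows formally; I would state it as retract rationality to match the theorem, noting that in fact one obtains the stronger stable rationality in this case.
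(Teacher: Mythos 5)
Your reduction hinges on the claim that the standard torsor $U\to U/N$ under an elementary piece $N\in\{\mu_{p^n},\alpha_p,\Z/p\}$ is generically (even Zariski-locally) trivial over $U/N$, so that $U$ is birational to $(U/N)\times N$. This is false, and fatally so. For infinitesimal $N$ the generic fibre of $U\to U/N$ is $\Spec$ of the purely inseparable field extension $k(U)\supset k(U)^N$, an integral scheme, whereas the trivial torsor $N\times\Spec k(U/N)$ is non-reduced; for $N=\Z/p$ the generic fibre is a connected degree-$p$ \'etale cover, not a disjoint union of $p$ copies. So the standard torsor is \emph{never} generically trivial for nontrivial finite $N$, and a torsor under a finite group scheme that is nontrivial at the generic point stays nontrivial on every dense open. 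The cohomological justification you give is also wrong in characteristic $p$: $H^1_{\fppf}(K,\mu_p)=K^*/(K^*)^p$ and $H^1_{\fppf}(K,\alpha_p)=K/K^p$ do not vanish for the (imperfect) function fields that occur, so "rational smooth base" does not force triviality -- these groups are not birational invariants and vanishing over a small open with units $k^*$ says nothing at the generic point. (Birationality of $U$ with $(U/N)\times N$ is in any case absurd for infinitesimal $N$, since the right-hand side is non-reduced.) Consequently the inductive step $BG\sim B(G/N)$, and with it the claimed stable rationality, collapses; the issue about keeping the residual action generically free is moot by comparison.

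The mechanism that actually works, and that the paper uses, trivializes torsors under \emph{special smooth} groups, never under the finite pieces themselves. First, the diagonalizable quotient $G/G_{\rm uni}\simeq\mathrm{Diag}_k(M)$ is realized as the kernel of an isogeny $\G_m^n\to\G_m^n$, and the statement "if $1\to G'\to G\to G''\to 1$ with $G''$ special and rational, then $BG$ and $BG'$ are stably birational" (applied with the ambient $\G_m^n$ as the special group) gives retract rationality of $B(G/G_{\rm uni})$. Second, the finite unipotent kernel is handled not birationally but through Merkurjev's characterization of retract rationality as surjectivity of $H^1_{\fppf}(R,G)\to H^1_{\fppf}(R/\frkm,G)$ for local $k$-algebras $R$: after a d\'evissage (derived series, then the Verschiebung filtration) one reduces to an abelian unipotent kernel whose Cartier dual has height one, and the non-split extension is treated via the gerbe banded by the twisted form $N=G'\wedge^{G''}P''$, using the resolution $0\to N\to\omega_{N^D}\to\omega_{N^D}^{(p)}\to 0$ by vector groups to kill $H^2_{\fppf}(R,N)$ and to get the needed surjectivity on $H^1$. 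If you want to salvage your outline, you would have to replace "generic triviality of the finite torsor" by a lifting argument of this kind over arbitrary local $k$-algebras; no purely generic/birational trivialization of the $\mu_{p^n}$-, $\alpha_p$- or $\Z/p$-steps is available in characteristic $p$.
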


In Section \ref{sec:Witt alg}, we will treat the problem in the case where $G$ is a finite connected non-abelian simple $k$-group scheme. As explained in \cite{Viviani}, the classification of finite connected non-abelian simple $k$-group schemes is equivalent to the classification of finite dimensional (non-abelian) simple Lie algebras over $k$. In characteristic $p>3$, a classification theory for finite dimensional simple Lie algebras over $k$ is established by Block--Wilson--Strade--Premet (cf.\ \cite{Strade}). For example, in characteristic $p>5$, the theory says that all the finite dimensional simple Lie algebras over $k$ are divided into two types, namely \textit{classical} type or \textit{Cartan} type. The classical type means that a simple Lie algebra is obtained by the modulo $p$ reduction of a complex simple Lie algebra. For example, $\mathfrak{sl}_n$ and $\mathfrak{sp}_{2n}$ over $k$ are classical type simple Lie algebras and the corresponding simple group schemes are nothing but the first Frobenius kernels ${\rm SL}_{n(1)}$ and ${\rm Sp}_{2n(1)}$ of the semi-simple simply connected algebraic groups ${\rm SL}_n$ and ${\rm Sp}_{2n}$. As these are {\it special} algebraic groups, the conclusion of Corollary \ref{cor:special G_r} implies that the associated simple group schemes ${\rm SL}_{n(1)}$ and ${\rm Sp}_{2n(1)}$ have retract rational classifying spaces.

On the other hand, in a large part of the present paper, we will deal with the most basic class of Cartan type simple Lie algebras, namely, the \textit{generalized Witt algebras} $W(m;\bm{n})$ for $m\in\Z_{>0}$ and $\bm{n}\in\Z_{>0}^{m}$.  We denote by $\Gamma(m;\bm{n})$ the finite simple $k$-group scheme associated with $W(m;\bm{n})$. 
As a main result, we will prove the following. 

\begin{thm}(cf.\,Theorem \ref{thm:ret rat BGamma(m,n)})\label{thm-int:ret rat BGamma(m,n)}
Suppose that $k$ is an algebraically closed field of characteristic $p>3$. If $\bm{n}=\bm{1}$ or $m=1$, then the classifying space $B\Gamma(m;\bm{n})$ of the finite simple group scheme $\Gamma(m;\bm{n})$ is retract rational over $k$. 
\end{thm}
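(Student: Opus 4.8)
The plan is to reduce the statement to a structural result about the automorphism group scheme of $W(m;\bm{n})$. Put $P\Def\uAut_k(W(m;\bm{n}))$ (as a group scheme of restricted Lie algebra automorphisms; equivalently one may work with the automorphism group scheme of the divided power algebra $\sO(m;\bm{n})$). By the construction of finite connected non-abelian simple $k$-group schemes out of finite dimensional simple restricted Lie algebras (cf.\ the references in Section~\ref{sec:Witt alg}), $\Gamma(m;\bm{n})$ is realized as a closed subgroup scheme of $P$. On the other hand, the versal-torsor-and-no-name mechanism behind Theorem~\ref{thm-int: ret rat trig} should give, more generally, the following: \emph{if $H$ is a closed subgroup scheme of an affine $k$-group scheme $P$ admitting a triangulation — a finite normal series $P=Q_0\supseteq Q_1\supseteq\cdots\supseteq Q_N=1$ by closed normal subgroup schemes whose successive quotients are among $\G_a$, $\G_m$, $\alpha_p$, $\mu_p$, and, since $\GL_r$ is a special algebraic group (cf.\ Corollary~\ref{cor:special G_r}), also among the $\GL_r$ — then $BH$ is retract rational over $k$}, the retract rationality being built up layer by layer from the series. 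So the theorem reduces to exhibiting such a triangulation of $P=\uAut_k(W(m;\bm{n}))$ when $\bm{n}=\bm{1}$ or $m=1$, which is precisely the kind of statement advertised in the abstract.

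The heart of the argument is then this triangulation, and here I would use the explicit descriptions of $P$ obtained in Section~\ref{sec:Witt alg}. For $m=1$: one has $W(1;n)=\Der_k(\sO(1;n))$ with $\sO(1;n)=k[t]/(t^{p^{n}})$, and for $p>3$ every automorphism of $W(1;n)$ is induced by a filtration-preserving automorphism of $\sO(1;n)$, up to an infinitesimal correction that occurs only when $n\ge 2$ (because $\Der_k W(1;n)\supsetneq W(1;n)$ in that range). The filtered automorphism group scheme of $\sO(1;n)$ is the ``substitution group'': over a $k$-algebra $R$ it consists of the maps $t\mapsto\sum_{i\ge 1}a_i t^{(i)}$ with $a_1\in R^{\times}$, together with the nilpotent translations $t\mapsto t+c$, $c^{p^{n}}=0$. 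Reading off first $a_1$ (a $\G_m$-factor), then $a_2,a_3,\dots$ (a tower of $\G_a$'s), then the translations (a tower of $\alpha_p$'s), and finally the infinitesimal thickening — a commutative infinitesimal group, hence a tower of $\alpha_p$'s and $\mu_p$'s — yields the normal series. For $\bm{n}=\bm{1}$: here $W(m;\bm{1})$ is complete, and I would use its grading $W(m;\bm{1})=\bigoplus_{i=-1}^{m(p-1)-1}W(m;\bm{1})_i$ with $W(m;\bm{1})_{-1}=\bigoplus_j k\partial_j$ and $W(m;\bm{1})_0\cong\mathfrak{gl}_m$. Correspondingly $P$ is generated by the Levi factor $\GL_m$ acting linearly on the $x_j$, the connected unipotent subgroup scheme generated by the exponentials of the $\ad$-nilpotent positive-degree derivations (a tower of $\G_a$'s), and the nilpotent translations $x_j\mapsto x_j+c_j$, $c_j^{p}=0$ (a copy of $\alpha_p^{m}$); one checks these assemble into a normal series of the required type, with $\GL_m$ the only non-triangulable ingredient.

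The main obstacle will be exactly this triangulation, and it is where the hypotheses $\bm{n}=\bm{1}$ and $m=1$ enter essentially. First, $P$ is far from smooth — it must contain the infinitesimal group $\Gamma(m;\bm{n})$ — so one has to control the full, non-reduced group scheme $P$ and verify that every non-smooth layer is of type $\alpha_p$ or $\mu_p$; this rests on the completeness (resp.\ the explicit form of the derivation algebra) of $W(m;\bm{n})$ in the two cases. Second, in the case $\bm{n}=\bm{1}$ one must check that the only reductive constituent is $\GL_m$, which is special. For a general $W(m;\bm{n})$ with $m\ge 2$ and some $n_i\ge 2$, the mixed divided power structure produces additional (non-inner, non-graded) ``special'' automorphisms and a genuinely more complicated, non-split extension structure that I do not see how to put into such a series — which is why the theorem does not presently extend beyond $\bm{n}=\bm{1}$ or $m=1$ (and indeed it is open whether $B\Gamma(m;\bm{n})$ is retract rational in general). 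A secondary, essentially routine, point is verifying that the retract-rationality machinery of Section~\ref{sec:trigonalizable} — versality of the relevant $\Gamma(m;\bm{n})$-torsor (passing, if necessary, through a faithful embedding $\Gamma(m;\bm{n})\hookrightarrow\GL_N$ and the rational variety $\GL_N/\uAut_k(W(m;\bm{n}))$ of Lie algebra structures) together with the no-name lemma — applies once the triangulation is in hand.
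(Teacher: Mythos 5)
Your central reduction is not valid. The principle you invoke --- that if $H$ is a closed subgroup scheme of an affine $k$-group scheme $P$ admitting a normal series with layers among $\G_a,\G_m,\alpha_p,\mu_p,\GL_r$, then $BH$ is retract rational --- is false: every finite constant group scheme embeds into some $\GL_N$, which is special, yet by Saltman's example quoted in the introduction there are finite groups $G$ with $B_kG$ not retract rational. Retract rationality simply does not pass to closed subgroup schemes of a ``triangulable'' ambient group. The layer-by-layer mechanism of Section \ref{sec:trigonalizable} (Theorem \ref{thm: ret rat trig} and Lemma \ref{lem: trig}) requires the normal series to live inside the group whose classifying space is being computed, with finite unipotent kernels; it cannot be applied to $\Gamma(m;\bm{n})$, which is simple and non-abelian and hence has no such series. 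Note also that the decomposition of Theorem \ref{thm:triangulation} is only an isomorphism of $k$-schemes $G^+\times G^0\times G^-\simeq G(m;\bm{n})$; $G^-$ is not a normal subgroup scheme, and no composition series of $G(m;\bm{n})$ with layers $\G_a,\G_m,\alpha_p,\mu_p,\GL_r$ is asserted or needed in the paper.

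What the triangulation actually yields, and what you dismiss as ``essentially routine,'' is where the real work lies. In the paper, Theorem \ref{thm:triangulation} is used only to identify the Frobenius quotient $G(m;\bm{n})/\Gamma(m;\bm{n})\simeq (G^+\rtimes G^0)^{(p)}$ as a special group, so that $B\Gamma(m;\bm{n})$ is stably birationally equivalent to $BG(m;\bm{n})$ (Corollary \ref{cor:triangulation}, via Proposition \ref{prop:special G''}). The remaining and main step is the retract rationality of $BG(m;\bm{n})$ itself, which is not a formal consequence of the decomposition: by Proposition \ref{prop:ret rat} it is equivalent to lifting $G(m;\bm{n})$-torsors, i.e.\ twisted forms of $W(m;\bm{n})$, from the residue field $\kappa$ of a suitable local ring $R$ to $R$ (Proposition \ref{prop:ret rat BG(m,n)}). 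For $\bm{n}=\bm{1}$ this is done by lifting purely inseparable forms of $A(m;\bm{1})$; for $m=1$ it requires identifying the forms as Witt--Ree algebras, extending that notion to general base rings (Section \ref{sec:Witt-Ree}), verifying conditions (WR1)--(WR3) for an explicit lift $L(\widetilde{A};\widetilde{D})$, and then proving the lift is a twisted form of $W(1;n)\otimes_kR$ by an eigenvalue analysis over $\overline{R}$ using Ree's and Waterhouse's theorems --- and this is exactly where the hypothesis $\dim_\kappa\Omega^1_{\kappa/k}\ge n_1+\cdots+n_m$ enters. None of this appears in your outline, so the proposal both rests on a false reduction and omits the substantive part of the proof.
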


A key ingredient of the proof is the following decomposition for the automorphism group schemes $G(m;\bm{n})\Def\uAut_k(W(m;\bm{n}))$ of the generalized Witt algebras $W(m;\bm{n})$. 

\begin{thm}(cf.\ Theorem \ref{thm:triangulation})\label{thm-int:triangulation}
For any positive integer $m>0$ and any $m$-tuple $\bm{n}=(n_1,\dots,n_m)\in\Z^m_{>0}$ of positive integers, there exists a system of closed subgroup schemes $G^{\bullet}\subset G(m;\bm{n})$ of the automorphism group scheme $G(m;\bm{n})$, where $\bullet\in\{-,0,+\}$, such that the following conditions are satisfied.
\begin{enumerate}
\renewcommand{\labelenumi}{(\roman{enumi})}
\item $G^-$ is isomorphic to the finite connected abelian unipotent $k$-group scheme $\prod_{i=1}^mW_{n_i(1)}$, where $W_{n_i(1)}$ is the Frobenius kernel of the Witt vector group $W_{n_i}$ of length $n_i$ for $1\le i\le m$.  

\item $G^0$ is isomorphic to the general linear algebraic group $\GL_m$.

\item $G^+$ is a smooth unipotent $k$-group scheme.

\item Furthermore, the multiplication map of $G(m;\bm{n})$ induces an isomorphism of $k$-schemes, 
\[
G^+\times G^0\times G^-\xrightarrow{~\simeq~}~G(m;\bm{n})~;~(\varphi^+,\varphi^0,\varphi^-)\mapsto \varphi^+\circ\varphi^0\circ\varphi^-.
\]
\end{enumerate}
\end{thm}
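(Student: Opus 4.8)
The plan is to analyze the automorphism group scheme $G(m;\bm{n}) = \uAut_k(W(m;\bm{n}))$ through its action on the natural filtration of the generalized Witt algebra. Recall that $W(m;\bm{n})$ carries a standard decreasing filtration $W(m;\bm{n}) = W_{(-1)} \supset W_{(0)} \supset W_{(1)} \supset \cdots$ where $W_{(0)}$ is a maximal subalgebra of finite codimension (the ``standard maximal subalgebra'') and each $W_{(i)}$ for $i \geq 0$ is an ideal of $W_{(0)}$. A fundamental fact (due to essentially to the structure theory of Cartan type Lie algebras) is that every automorphism of $W(m;\bm{n})$ preserves this filtration, at least $W_{(0)}$ and hence all $W_{(i)}$; this should hold functorially, so that $G(m;\bm{n})$ is realized as a closed subgroup scheme of the automorphisms of the associated graded, plus unipotent corrections. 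First I would set up $G^0$ as the stabilizer (in the group scheme sense) of a chosen homogeneous complement, i.e., the subgroup preserving both $W_{(0)}$ and a chosen graded splitting; acting on $\mathrm{gr}_{-1} W \cong k^m$ this gives a map $G^0 \to \GL_m$, and by writing down explicit ``linear change of coordinates'' automorphisms $x_i \mapsto \sum_j a_{ij} x_j$ one checks this is an isomorphism onto $\GL_m$ (for $\bm{n}$ arbitrary one must be careful that the $p^{n_i}$-truncations are respected, but a linear substitution with matrix in $\GL_m$ does preserve them when... this is exactly where the hypothesis $\bm{n}=\bm{1}$ or $m=1$ might be needed — wait, no, Theorem 1.5 is stated for \emph{all} $m$ and $\bm{n}$, so the linear automorphisms must always exist; the constraint is that the matrix entries lie in the ring and the substitution is compatible with truncation, which it is).

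Next I would construct $G^-$ and $G^+$ by decomposing the unipotent radical of the filtration-preserving group according to how automorphisms act on the grading. Concretely, an automorphism $\varphi$ preserving $W_{(0)}$ can be composed with a unique $G^0$-element to assume it acts trivially on $\mathrm{gr}_{-1}$; such $\varphi$ then either \emph{raises} degree (the ``positive'' part $G^+$, corresponding to substitutions $x_i \mapsto x_i + (\text{higher order terms})$, which form a smooth connected unipotent group — smoothness because these are honest polynomial substitutions with free higher-order coefficients, giving an affine space) or \emph{lowers} degree (the ``negative'' part $G^-$, corresponding to $x_i \mapsto x_i + (\text{constant}) \cdot (\text{divided-power-type corrections involving } \partial_j)$, or rather automorphisms that move $W_{(0)}$ — these are the exotic automorphisms special to characteristic $p$). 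The identification $G^- \cong \prod_i W_{n_i(1)}$ comes from the fact that the ``negative'' directions are parametrized by adding to each $\partial_i$ a nilpotent element; the Frobenius-kernel Witt-vector structure appears because the relevant one-parameter-type deformations have order exactly $p^{n_i}$ in a way governed by Witt vector addition (this is presumably where the paper's extension of Witt--Ree algebras enters). I would verify (i) and (iii) by exhibiting explicit coordinates on these group schemes and checking the group law matches $\prod W_{n_i(1)}$ respectively a smooth unipotent group.

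Finally, for part (iv), the product decomposition $G^+ \times G^0 \times G^- \xrightarrow{\sim} G(m;\bm{n})$, I would argue as follows. Given any $\varphi \in G(m;\bm{n})(R)$ for a $k$-algebra $R$: first, $\varphi$ need not preserve $W_{(0)}$, but one shows there is a unique $\varphi^- \in G^-(R)$ such that $\varphi \circ (\varphi^-)^{-1}$ does preserve $W_{(0)}$ — this is a transitivity statement saying $G^-$ acts simply transitively on the ``$R$-points conjugate to the standard $W_{(0)}$'', which is the heart of the matter. Then $\varphi \circ (\varphi^-)^{-1}$ preserves $W_{(0)}$, hence acts on $\mathrm{gr}_{-1}$ via some $g \in \GL_m(R)$; letting $\varphi^0 \in G^0(R)$ be the corresponding linear automorphism, $\varphi \circ (\varphi^-)^{-1} \circ (\varphi^0)^{-1}$ preserves the grading up to raising degree, hence lies in $G^+(R)$; this gives existence and, tracing uniqueness at each stage, bijectivity on $R$-points functorially, hence an isomorphism of schemes. (To upgrade the functorial bijection to a scheme isomorphism it suffices to check the map is a morphism — clear, it's multiplication — and that both sides are, say, of the same finite type with the map being a monomorphism and flat, or more simply that the inverse maps $\varphi \mapsto (\varphi^+,\varphi^0,\varphi^-)$ are also morphisms, which follows from the explicit construction.) The main obstacle I anticipate is the simply-transitive action of $G^-$ on the set of ``standard maximal subalgebras'': proving that $G^-(R)$ exhausts all such subalgebras over an arbitrary base $R$ requires the full strength of the classification of maximal subalgebras of $W(m;\bm{n})$ in the scheme-theoretic/relative setting, and relating the parametrizing object precisely to $\prod_i W_{n_i(1)}$ is where the Witt-vector bookkeeping (and the paper's generalization of Witt--Ree algebras to general base rings) will do the real work; establishing smoothness of $G^+$ and the fact that it is \emph{exactly} the degree-raising part (no unexpected extra automorphisms) is a secondary technical point requiring a dimension count against the known automorphism group over a field.
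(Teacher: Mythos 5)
Your outline has the right overall shape---strip off a ``translation'' part, then a linear part, and show the remainder raises degree---and your uniqueness sketch matches the paper's. But the proposal has genuine gaps at exactly the points where the real work lies. Your opening claim that every automorphism preserves $W_{(0)}$ functorially is false: over non-reduced $R$ there are automorphisms (precisely those in $G^-$, which is an infinitesimal group scheme, trivial on points of reduced rings) that move the standard maximal subalgebra. You implicitly correct this later, but then the step you yourself flag as ``the main obstacle''---that a unique $\varphi^-\in G^-(R)$ corrects an arbitrary $\varphi\in G(m;\bm{n})(R)$ so that $\varphi\circ(\varphi^-)^{-1}$ preserves the filtration---is left unproved, and your guess that it needs a relative classification of maximal subalgebras or the Witt--Ree machinery is off the mark: Witt--Ree algebras enter only in the torsor-lifting arguments (Sections \ref{sec:Witt-Ree}--\ref{sec:RP Witt}), not in the triangulation. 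The paper's proof works entirely on the algebra side: by Waterhouse's Theorem \ref{thm:Aut W} every $R$-point of $G(m;\bm{n})$ is $\varphi_*$ for a derivation-automorphism $\varphi$ of $A(m;\bm{n})\otimes_kR$; one sets $a_{is}:=\varepsilon(\varphi(x_i^{(p^s)}))$, which satisfy $a_{is}^p=0$ (since $(x_i^{(p^s)})^p=0$) and hence give a point of $\prod_iW_{n_i(1)}$, takes $\varphi^-=\varphi(\bm{a})$ built from the Artin--Hasse exponentials $E_p(a_{is}\partial_i^{p^s})$, and verifies by an explicit divided-power computation (crucially using $a_{is}^p=0$) that $\varphi\circ(\varphi^-)^{-1}$ preserves the augmentation ideal $I\otimes_kR$; uniqueness then follows because $\varepsilon(\varphi(x_i^{(p^s)}))$ forces $\bm{a}$ and the linear part forces $\varphi^0$. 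Nothing like a classification of maximal subalgebras over general $R$ is invoked.

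Two structural inputs you never supply are needed even to make your ``coordinate substitution'' descriptions legitimate. First, the identification of the Lie-algebra automorphism scheme $G(m;\bm{n})$ with the scheme of derivation-automorphisms of $A(m;\bm{n})$ (Theorem \ref{thm:Aut W}, requiring $p>3$): without it, defining $G^0$ and $G^+$ by substitutions $x_i\mapsto\cdots$ and proving surjectivity onto the automorphisms of the \emph{Lie algebra} does not go through. Second, the divided power structure: for general $\bm{n}$ the paper's Lemma \ref{lem:def of G^+,0} shows that derivation-automorphisms preserving $I$ are exactly automorphisms of the divided power algebra, hence are determined by the images of $x_1,\dots,x_m$ with arbitrary higher-order coefficients---this is what makes $G^+$ an affine space and hence smooth unipotent, and what shows linear substitutions give a copy of $\GL_m$; your appeal to ``honest polynomial substitutions'' ignores the divided-power constraint. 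Likewise your identification $G^-\simeq\prod_iW_{n_i(1)}$ is only asserted (``Witt-vector bookkeeping''), whereas the paper constructs the homomorphism concretely from the embedding $\bigoplus_i\Lie(W_{n_i})\hookrightarrow\Der_k(A(m;\bm{n}))$, $d/dt_s\mapsto\partial_i^{p^s}$, exponentiated by the Artin--Hasse series. (Your worry about the hypothesis $\bm{n}=\bm{1}$ or $m=1$ is a red herring: that hypothesis belongs to the retract-rationality theorem, not to the triangulation, which holds for all $m$ and $\bm{n}$.)
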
 

In the case where $\bm{n}=\bm{1}$, there exists a natural isomorphism of $k$-group schemes 
\[
\uAut_k(k[y_1,\dots,y_m]/(y_1^p,\dots,y_m^p))\xrightarrow{~\simeq~}G(m;\bm{1})
\]
(cf.\cite{Waterhouse0}), and such decompositions for the automorphism group schemes of truncated polynomial rings $k[y_1,\dots,y_m]/(y_1^p,\dots,y_m^p)$ were investigated by Severitt in \cite{severitt}.

Thanks to Theorem \ref{thm-int:triangulation}, we can reduce the problem to the rationality problem for $G(m;\bm{n})$ (cf.\ Corollary \ref{cor:triangulation}). Moreover, by using a characterization of retract rationality in terms of lifting problems for torsors (cf.\ Proposition \ref{prop:ret rat}), the problem can be further reduced to solving a certain lifting problem of $G(m;\bm{n})$-torsors. Precisely, the proof of Theorem \ref{thm-int:ret rat BGamma(m,n)} is reduced to proving the next result.

\begin{thm}(cf.\,Proposition \ref{prop:ret rat BG(m,n)})
Suppose that $k$ is an algebraically closed field of characteristic $p>3$. Let $(R,\frkm)$ be the localization of a polynomial ring $k[X_1,\dots,X_N]$ at a prime ideal $\frkP\subset k[X_1,\dots,X_N]$ with residue field $\kappa\Def R/\frkm$ with $\Dim_{\kappa}\Omega^1_{\kappa/k}\ge n_1+\cdots+n_m$. If $\bm{n}=\bm{1}$ or $m=1$, the natural map 
\[
H^1_{\fppf}(R,G(m;\bm{n}))\lto H^1_{\fppf}(\kappa,G(m;\bm{n}))
\] 
is surjective. 
\end{thm}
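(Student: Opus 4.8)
The plan is to use the triangulation of Theorem~\ref{thm-int:triangulation} to replace $G\Def G(m;\bm n)$ by a much simpler subgroup. Write $G\cong G^+\times G^0\times G^-$ as in that theorem. The first point is that, \emph{precisely} under the hypothesis ``$\bm n=\bm 1$ or $m=1$'', the linear part $G^0=\GL_m$ normalizes $G^-$: for $\bm n=\bm 1$ one has $G^-=\alpha_p^{m}$ with $\GL_m$ acting through its standard representation, while for $m=1$ one has $G^-=W_{n_1(1)}$ with $\GL_1=\G_m$ acting through the Witt-vector scaling. Hence $B\Def G^0\cdot G^-$ is a closed subgroup scheme of $G$, sitting in a \emph{split} exact sequence $1\to G^-\to B\to\GL_m\to 1$; and by Theorem~\ref{thm-int:triangulation}(iv) multiplication gives an isomorphism of $k$-schemes $G^+\times B\xrightarrow{\ \sim\ }G$, so that $G/B\cong G^+$ and the quotient morphism $G\to G/B$ admits a section.

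Granting that every class of $H^1_{\fppf}(\kappa,G)$ admits a reduction of structure group to $B$ (see the obstacle below), the rest is formal. First, since $G^+$ is a smooth connected unipotent $k$-group scheme over the algebraically closed field $k$, it is split, hence a successive extension of copies of $\G_a$; as $R$ and $\kappa$ are affine, this together with $H^{j}_{\fppf}(\,\cdot\,,\G_a)=H^{j}_{\mathrm{Zar}}(\,\cdot\,,\calO)=0$ for $j>0$ makes $G/B\cong G^+$ ``special'' enough for the reduction to be unobstructed once the inseparable part is controlled. Next I compute $H^1_{\fppf}(A,B)$ for $A\in\{R,\kappa\}$: from $H^1_{\fppf}(A,\GL_m)=\{*\}$ (Hilbert~90; $A$ is local) and the split sequence one gets $H^1_{\fppf}(A,B)\cong H^1_{\fppf}(A,G^-)/\GL_m(A)$, and from the Artin--Schreier--Witt sequences $0\to W_{n_i(1)}\to W_{n_i}\xrightarrow{\ F\ }W_{n_i}\to 0$ together with $H^{j}_{\fppf}(A,W_{n_i})=H^{j}_{\mathrm{Zar}}(A,\calO)^{\oplus n_i}=0$ for $j>0$ one gets $H^1_{\fppf}(A,G^-)\cong\prod_{i=1}^m W_{n_i}(A)/FW_{n_i}(A)$. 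Since $R\twoheadrightarrow\kappa$, both $W_{n_i}(R)\to W_{n_i}(\kappa)$ and $\GL_m(R)\to\GL_m(\kappa)$ are surjective, so $H^1_{\fppf}(R,B)\to H^1_{\fppf}(\kappa,B)$ is surjective. Combining this with the surjectivity of $H^1_{\fppf}(\kappa,B)\to H^1_{\fppf}(\kappa,G)$ and commutativity of the evident square, the composite $H^1_{\fppf}(R,B)\to H^1_{\fppf}(R,G)\to H^1_{\fppf}(\kappa,G)$ is surjective, which is what we want.

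The real content, and the step I expect to be the main obstacle, is the claim that every class of $H^1_{\fppf}(\kappa,G)$ reduces to $B$ — equivalently, via $G(m;\bm n)\cong\uAut_k(W(m;\bm n))$, that every fppf form of the Lie algebra $W(m;\bm n)$ over $\kappa$ arises by twisting by a $B$-cocycle (for $\bm n=\bm 1$: that every fppf form over $\kappa$ of the truncated polynomial ring $k[y_1,\dots,y_m]/(y_1^p,\dots,y_m^p)$ is, after a linear change of the $y_i$, of the shape $\kappa[z_1,\dots,z_m]/(z_1^p-a_1,\dots,z_m^p-a_m)$ with $a_i\in\kappa$). This is exactly where the hypothesis $\Dim_\kappa\Omega^1_{\kappa/k}\ge n_1+\cdots+n_m$ is used: such a form becomes trivial over a finite purely inseparable extension of $\kappa$ whose inseparable complexity is controlled by the order $p^{\,n_1+\cdots+n_m}$ of $G^-$, and realizing the reduction to $B$ already over $\kappa$ (and compatibly over $R$) requires $\kappa$ to contain a $p$-independent system of that size, which is guaranteed precisely by $\Dim_\kappa\Omega^1_{\kappa/k}=\log_p[\kappa:\kappa^p]\ge n_1+\cdots+n_m$ (using that $k$ is perfect). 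Concretely I would establish this by analysing fppf cocycles valued in $G$ through the scheme-theoretic factorization $G=G^+\times G^0\times G^-$: use the section of $G\to G/B$ to normalize the $G^+$-component of a cocycle, reducing to a $B$-valued cocycle, and then treat the remaining part by the Witt-vector description of $H^1$ above; the bookkeeping needed to keep the normalized cocycle inside $B$ while remaining compatible with $R\twoheadrightarrow\kappa$ is the principal technical difficulty.
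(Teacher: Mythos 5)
There is a genuine gap, and you have already located it yourself: the assertion that every class in $H^1_{\fppf}(\kappa,G(m;\bm n))$ admits a reduction of structure group to $B=G^-\rtimes G^0$ (compatibly with lifting to $R$) is not a preliminary step but is essentially the whole content of the statement, and your proposal does not prove it. The mechanism you sketch — ``use the section of $G\to G/B$ to normalize the $G^+$-component of a cocycle'' — does not work as stated: Theorem \ref{thm:triangulation} gives a decomposition $G^+\times G^0\times G^-\simeq G(m;\bm n)$ only as $k$-\emph{schemes}, not as group schemes, so writing a cocycle pointwise as $g=g^+g^0g^-$ does not make $g^0g^-$ a cocycle (the cocycle identity mixes the three components), and the $G$-action on $G/B\simeq G^+$ is not through a group structure to which a specialness argument applies. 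Equally, the existence of a $\kappa$-point on the twisted quotient $P/B$ is not formal, because $B$ contains the infinitesimal group $G^-$ and is therefore not smooth; forms of affine space arising from inseparable descent need not be handled by the $\G_a$-devissage you invoke. The remaining bookkeeping (Hilbert 90, $H^1(A,W_{n_i(1)})\cong W_{n_i}(A)/FW_{n_i}(A)$, surjectivity of $W_{n_i}(R)\to W_{n_i}(\kappa)$) is fine, but it only shows that $B$-classes lift; without the reduction statement over $\kappa$ the argument does not reach $H^1_{\fppf}(\kappa,G(m;\bm n))$. You also place the hypothesis $\Dim_\kappa\Omega^1_{\kappa/k}\ge n_1+\cdots+n_m$ inside the reduction step, whereas it is not needed at all when $\bm n=\bm 1$.

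For comparison, the paper avoids any reduction of structure group and instead identifies cohomology classes with twisted forms and lifts those directly. For $\bm n=\bm 1$, by Theorem \ref{thm:Aut W} a class is a form of the truncated polynomial algebra, hence (by \cite{Waterhouse0}) of the shape $\kappa[x_1,\dots,x_m]/(x_1^p-a_1,\dots,x_m^p-a_m)$, and one lifts the $a_i$ to $R$; no hypothesis on $\Omega^1_{\kappa/k}$ is used. For $m=1$, a form is a Witt--Ree algebra $L(A;D)=A\,D$ by \cite[Theorem A]{Waterhouse}; the paper lifts $A$ and $D$ to $\widetilde A$, $\widetilde D$ over $R$, verifies the Witt--Ree axioms (WR1)--(WR3) for $\widetilde L=\widetilde A\,\widetilde D$ by flatness and Nakayama arguments, and then proves that $\widetilde L$ is indeed a twisted form of $W(1;n)\otimes_kR$ by passing to the integral closure $\overline R$ of the completion, choosing $\gamma_1,\gamma_p,\dots,\gamma_{p^{n-1}}$ with independent differentials — this is exactly where $\Dim_\kappa\Omega^1_{\kappa/k}\ge n$ enters — so that the characteristic polynomial of a lifted toral element is separable, and concluding with the eigenspace analysis and \cite[Theorem 6.10]{Ree}. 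If you want to salvage your route, you would have to prove the reduction-to-$B$ statement over $\kappa$ together with its compatibility with $R\twoheadrightarrow\kappa$, and that is essentially the same amount of work as the paper's form-lifting argument.
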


To settle the lifting problem, we will make use of the classification theory due to Ree\cite{Ree}, Wilson\cite{Wilson} and Waterhouse\cite{Waterhouse} for twisted forms of the generalized Witt algebras $W(m;\bm{n})$. However, as most of their results about these twisted forms are proved in the case when the base ring $R$ is a field, we need to extend some of their arguments to more general base rings. In Section \ref{sec:Witt-Ree}, we will generalize the definition of \textit{Witt--Ree algebra} in the sense of Waterhouse\cite{Waterhouse} to obtain candidates of twisted forms of the generalized Witt algebras over any general base rings (cf.\ Definition \ref{def:Witt-Ree}).

\begin{ack}
The author would like to express his gratitude to Lei Zhang and Fabio Tonini for introducing him to the present topic, that is, rationality problem for finite connected group schemes. The author also would like to thank Takao Yamazaki for having fruitful discussions. This work was supported by JSPS KAKENHI Grant Numbers JP21K20334, JP24K16894.
\end{ack}


\section{Trigonalizable group schemes}\label{sec:trigonalizable}

\subsection{Preliminaries}\label{sec:pre}

In this subsection, we will briefly recall several basic background materials for rationality problems for group schemes. Let $k$ be a field and $G$ a finite $k$-group scheme. A finite dimensional $k$-linear representation $\rho\colon G\to\GL_V$ is said to be {\em generically free} if there exists a $G$-stable Zariski dense open  subset $U\subset V$ such that the action of  $G$ on $U$ is free, i.e.\ the morphism $U\times G\to U\times U;(u,g)\mapsto (u,ug)$ is a monomorphism. As $G$ is finite, one may take such $U$ so that it is affine (cf.\cite[Lemma 2.2]{Brion}), and $U/G:=\Spec H^0(U,\scrO_U)^G$ then gives a geometric quotient of $U$ under the action of $G$ so that the quotient map $U\to U/G$ is a $G$-torsor (cf.\cite[Chapter I, 5.5.(6)]{jan03}), i.e.\ the morphism $U\times G\to U\times_{U/G}U$ is an isomorphism of $k$-schemes. The $G$-torsor $U\to U/G$ is called a {\it standard $G$-torsor}. By the no-name lemma, the stably birational equivalence class of $U/G$ does not depend on the choice of $\rho$ and $U$ (cf.\cite[Section 4.1]{Mer}). Henceforth, we put $BG\Def U/G$ and call it the {\em classifying space of $G$}. Note that $BG$ is a smooth connected scheme over $k$. A similar construction of $BG$ for an arbitrary affine algebraic $k$-group scheme is well-understood by the experts (cf.\ \cite[Section 4]{BF}\cite{BK}\cite[Section 4.1]{Mer}\cite{Scavia}). 

\begin{definition}(cf.\cite[Section 3]{Mer})\label{def:ret rat} 
A smooth connected scheme $X$ over $k$ is said to be \textit{retract rational} if there exists a dominant rational map from an affine space $f\colon \A^n_k\dashrightarrow X$ for some $n$ which admits a section $g\colon  W\to \A_k^n$ defined over some dense open  subset $W$ of $X$.
\end{definition}

\begin{rem}\label{rem:def ret rat}
The condition that $X$ is retract rational depends only on the stably birational equivalence class of $X$. In other words, $X$ is retract rational if and only if so is $X\times\A^1_k$ (cf.\,\cite[Lemma 3.2]{Mer}). In particular, any stably rational varieties are retract rational (cf.\,\cite[Proposition 3.4]{Mer}). Moreover, for any affine algebraic $k$-group scheme $G$, the condition that $BG$ is retract rational does not depend on either the choice of generically free representation $\rho\colon G\to\GL_V$ or the choice of a $G$-stable dense open subset $U$. 
\end{rem}

In the present paper, we would like to ask whether $BG$ is retract rational for several group schemes $G$. 
To approach this question, we will frequently use the following characterization of  the retract rationality in terms of lifting problems for torsors.

\begin{prop}(cf.\ \cite[Propositions 3.1 and 4.2]{Mer})\label{prop:ret rat}
Let $X$ be a smooth connected scheme over $k$. Then the following are equivalent.
\begin{enumerate}
\renewcommand{\labelenumi}{(\alph{enumi})}
\item $X$ is retract rational over $k$. 

\item For any local $k$-algebra $(R,\mathfrak{m})$, there exists a dense open subset $W\subset X$ such that the map $W(R)\to W(R/\mathfrak{m})$ is surjective. 

\item For any local $k$-algebra $(R,\mathfrak{m})$ with $R/\mathfrak{m}\simeq k(X)$, there exists a dense open subset $W\subset X$ such that the map $W(R)\to W(R/\frkm)$ is surjective. 

\item For any $k$-algebra homomorphism from any polynomial ring $\alpha\colon k[x_1,\dots,x_n]\to k(X)$ with $\Frac(\im(\alpha))=k(X)$ and with 
$\mathfrak{P}\Def\Ker(\alpha)$, there exists a dense open subset $W\subset X$ such that the map $W(k[x_1,\dots,x_n]_{\mathfrak{P}})\to W(k(X))$ is surjective. 
\end{enumerate}
Moreover, in the case where $X=BG$ is the classifying space for some $G$, these are also equivalent to each of the conditions below. 
\begin{itemize}
\item[(e)] For any local $k$-algebra $(R,\mathfrak{m})$ with $R/\frkm$ an infinite field, the map $H^1_{\fppf}(R,G)\to H^1_{\fppf}(R/\mathfrak{m},G)$ is surjective. 

\item[(f)] For any local $k$-algebra $(R,\mathfrak{m})$ with $R/\frkm\simeq k(X)$, the map $H^1_{\fppf}(R,G)\to H^1_{\fppf}(R/\mathfrak{m},G)$ is surjective. 

\item[(g)] For any $k$-algebra homomorphism from any polynomial ring $\alpha\colon k[x_1,\dots,x_n]\to k(X)$ with $\Frac(\im(\alpha))=k(X)$ and with  $\mathfrak{P}\Def\Ker(\alpha)$, the map $H^1_{\fppf}\left(k[x_1,\dots,x_n]_{\mathfrak{P}},G\right)\to H^1_{\fppf}(k(X),G)$ is surjective. 
\end{itemize}
\end{prop}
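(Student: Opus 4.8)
The plan is to translate the statement into a lifting problem for twisted forms of the Lie algebra $W(m;\bm{n})$ and to solve it via the Ree--Wilson--Waterhouse classification together with the generalization of Witt--Ree algebras to arbitrary base rings developed in Section~\ref{sec:Witt-Ree}. Since $G(m;\bm{n}) = \uAut_k(W(m;\bm{n}))$, for any $k$-algebra $S$ the set $H^1_{\fppf}(S, G(m;\bm{n}))$ is the set of isomorphism classes of twisted forms of $W(m;\bm{n})\otimes_k S$, and the displayed map is restriction of forms along $R \twoheadrightarrow \kappa$; so it suffices to show that every twisted form $L$ of $W(m;\bm{n})$ over the field $\kappa$ is isomorphic to $\widetilde L \otimes_R \kappa$ for some twisted form $\widetilde L$ over $R$.

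Next I would invoke the classification of forms. In the two cases $\bm{n} = \bm{1}$ and $m = 1$ (and this is where the hypothesis $p > 3$ enters), the results of Ree, Wilson and Waterhouse apply over the imperfect field $\kappa$: every form $L$ of $W(m;\bm{n})$ over $\kappa$ is a Witt--Ree algebra $W_{\mathcal D}$ attached to a datum $\mathcal D$. When $\bm{n} = \bm{1}$ this datum is a system $a_1,\dots,a_m \in \kappa$ that is $p$-independent over $k$, with $L \cong \Der_{\kappa}\bigl(\kappa[z_1,\dots,z_m]/(z_1^p - a_1,\dots,z_m^p - a_m)\bigr)$; when $m = 1$ it is a Witt-vector-type datum of length $n$ subject to an analogous non-degeneracy condition. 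In either case non-degeneracy of $\mathcal D$ is equivalent to the linear independence in $\Omega^1_{\kappa/k}$ of a family of $n_1 + \cdots + n_m$ differentials built from $\mathcal D$; hence the hypothesis $\Dim_{\kappa}\Omega^1_{\kappa/k} \ge n_1 + \cdots + n_m$ is what makes nontrivial forms possible, and, since a $p$-independent family extends to a $p$-basis, one may normalize $\mathcal D$ so that its underlying elements are part of a $p$-basis of $\kappa$ over $k$.

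The core step is then to lift the datum from $\kappa$ to $R$. Because $R$ is a localization of a polynomial ring over the perfect field $k$, the module $\Omega^1_{R/k}$ is free and the conormal sequence gives a surjection $\Omega^1_{R/k} \otimes_R \kappa \twoheadrightarrow \Omega^1_{\kappa/k}$. Consequently any finite family in $\kappa$ with linearly independent differentials lifts to a family in $R$ whose differentials reduce modulo $\frkm$ to a linearly independent family in $\Omega^1_{R/k}\otimes_R\kappa$, hence (by Nakayama for the free module $\Omega^1_{R/k}$ over the local ring $R$) form part of a free basis; in particular the lifts are $p$-independent over $k$ in $R$. Lifting the normalized datum $\mathcal D$ this way produces a non-degenerate datum $\widetilde{\mathcal D}$ over $R$ reducing to $\mathcal D$ modulo $\frkm$. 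By the theory of Section~\ref{sec:Witt-Ree}, the Witt--Ree algebra $W_{\widetilde{\mathcal D}}$ attached to a non-degenerate datum over $R$ is a genuine twisted form of $W(m;\bm{n})\otimes_k R$: it becomes split after the evident finite faithfully flat base change that adjoins the relevant $p$-th roots (resp.\ Witt-type roots), which is precisely what non-degeneracy guarantees. Since $W_{\widetilde{\mathcal D}} \otimes_R \kappa \cong W_{\mathcal D} \cong L$ by construction, this yields the required surjectivity.

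I expect the lifting of the datum to be routine; the real difficulties are the two classification inputs. Over $\kappa$ one must check that the Ree--Wilson--Waterhouse results --- stated for base fields, and with the characteristic $> 3$ hypothesis --- do cover all forms in our two cases, and that the datum can always be put in the $p$-basis-normalized shape used above. Over $R$ one needs the implication ``non-degenerate Witt--Ree datum $\Rightarrow$ fppf form'': this is exactly the purpose of extending Witt--Ree algebras to general base rings in Section~\ref{sec:Witt-Ree}, and the delicate point is that non-degeneracy over $R$ really is an fppf-local-triviality condition and not merely a fibrewise one. The restriction to $\bm{n} = \bm{1}$ or $m = 1$ is imposed precisely because no complete classification of the forms of the mixed algebras $W(m;\bm{n})$ is available.
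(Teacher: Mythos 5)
Your proposal does not prove the statement in question. The statement is the Merkurjev-style characterization of retract rationality: for an arbitrary smooth connected $k$-scheme $X$ (and, when $X=BG$, an arbitrary affine algebraic $k$-group scheme $G$), conditions (a)--(d) on lifting $W$-points over local $k$-algebras, and (e)--(g) on lifting fppf torsors, are all equivalent. What you have written is instead a proof sketch of Proposition \ref{prop:ret rat BG(m,n)} (the surjectivity of $H^1_{\fppf}(R,G(m;\bm{n}))\to H^1_{\fppf}(\kappa,G(m;\bm{n}))$ for $\bm{n}=\bm{1}$ or $m=1$), i.e.\ of one of the \emph{applications} of the present proposition later in the paper. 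Nothing in the statement involves $W(m;\bm{n})$, the Ree--Wilson--Waterhouse classification, the hypothesis $p>3$, the restriction $\bm{n}=\bm{1}$ or $m=1$, or the condition $\Dim_{\kappa}\Omega^1_{\kappa/k}\ge n_1+\cdots+n_m$; all of these are extraneous here, and none of the implications (a)$\Leftrightarrow$(b)$\Leftrightarrow$(c)$\Leftrightarrow$(d) or their equivalence with (e)--(g) is addressed.

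A proof of the actual statement would have to argue along the following lines (this is what is done in \cite[Propositions 3.1 and 4.2]{Mer}, which the paper simply cites): (a)$\Rightarrow$(b) uses the retraction $W\to\A^n_k\dashrightarrow X$ together with the surjectivity of $\A^n(R)\to\A^n(R/\mathfrak m)$ for a local ring $(R,\mathfrak m)$ to lift points of a suitable dense open $W\subset X$; (b)$\Rightarrow$(c)$\Rightarrow$(d) are specializations; (d)$\Rightarrow$(a) takes $\alpha$ so that the generic point of $X$ lifts, producing a rational map $\A^n_k\dashrightarrow X$ with a rational section. The bridge to (e)--(g) uses that $BG=U/G$ for a standard (versal) $G$-torsor $U\to U/G$ coming from a generically free representation: a torsor over $R/\mathfrak m$ (an infinite field, resp.\ $k(X)$) is induced from a point of a dense open of $BG$, and lifting that point along $W(R)\to W(R/\mathfrak m)$ lifts the torsor, and conversely. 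Since your argument is aimed at a different proposition, it cannot be salvaged as a proof of this one; it should instead be compared against the paper's proof of Proposition \ref{prop:ret rat BG(m,n)}.
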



\subsection{Frobenius kernels of special algebraic groups}\label{sec:special}

In this subsection, we will prove the retract rationality of $BG$ for the Frobenius kernels 
\[
G=\Sigma_{(r)}\Def\Ker\left(F^{(r)}\colon \Sigma\lto\Sigma^{(p^r)}\right)\quad (r\in\Z_{>0})
\] 
of any {\it special} algebraic group $\Sigma$ over an algebraically closed field $k=\overline{k}$ of characteristic $p>0$. 

\begin{definition}(cf.\cite[Theorem 1.1]{RT20})\label{def:special}
For an affine group scheme $G$ of finite type over a field $k$, the following are equivalent.
\begin{enumerate}
\renewcommand{\labelenumi}{(\alph{enumi})}
\item Any fppf $G$-torsor $T\to X$ over a reduced separated scheme $X$ of finite type over $k$ is locally trivial in the Zariski topology. 

\item $H^1_{\fppf}(K,G)=1$ for any field $K$ over $k$. 

\item $H^1_{\fppf}(R,G)=1$ for any local $k$-algebra $R$. 

\item $H^1_{\fppf}(S,G)=1$ for any semi-local $k$-algebra $S$. 
\end{enumerate}
The $k$-group scheme $G$ is said to be \textit{special} if it satisfies these conditions. 
\end{definition}

\begin{rem}(cf.\cite[Proposition 2.3, Lemma 3.1]{RT20})\label{rem:special}
\begin{enumerate}
\item If $G$ is special over a field $k$, then it is smooth and connected over $k$.
\item Suppose that $1\to G'\to G\to G''\to 1$ is an exact sequence of group schemes of finite type over a field $k$. If $G'$ and $G''$ are special, then so is $G$. 
\end{enumerate} 
\end{rem}

\begin{ex}(cf.\cite[Lemma 3.2]{RT20})\label{ex:special}
\begin{enumerate}
\renewcommand{\labelenumi}{(\arabic{enumi})}
\item The additive group $\G_{a,k}$ and the multiplicative scheme $\G_{m,k}$ are special. Hence, by Remark \ref{rem:special}(2), so are smooth connected $k$-split solvable group schemes over $k$.   

\item For any positive integer $n$, the algebraic groups $\GL_n$, $\SL_n$ and ${\rm Sp}_{2n}$ are special. 
\end{enumerate}
\end{ex}

\begin{rem}\label{rem:special implies ret rat} 
If $G$ is special over $k$, the classifying space $BG$ is retract rational. This is immediate from Definition \ref{def:special} together with Proposition \ref{prop:ret rat}. 
\end{rem}

\begin{prop}\label{prop:special}\label{prop:special G''}
Suppose that $1\to G'\to G\to G''\to 1$ is an exact sequence of affine group schemes of finite type over a field $k$. 
If $G''$ is special and it is rational as a variety over $k$, then $BG$ and $BG'$ are stably birationally equivalent to each other. 
\end{prop}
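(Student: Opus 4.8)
The plan is to use the characterization of $BG$ from the preliminary material (the construction of the classifying space via standard torsors) together with the no-name lemma. First I would choose a generically free representation $V''$ of $G''$ with $G''$-stable dense open $U''\subset V''$ on which $G''$ acts freely, giving the standard $G''$-torsor $U''\to U''/G'' = BG''$. Pulling this back along $G\to G''$ and using that $V''$ is also a representation of $G$ (via $G\to G''$), one gets a $G$-action on $U''$; but this action is generally not free, so instead I would take a generically free representation $V'$ of $G$ (for instance any generically free representation of $G$ restricting to one of $G'$, which exists since $G$ is finite — or more carefully, a faithful representation whose generic freeness can be arranged), with dense open $U'\subset V'$, and consider the diagonal action of $G$ on $U'\times U''$. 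The key point is that $G$ acts freely on $U'\times(\text{suitable open})$, and the quotient maps fit together so that $BG = (U'\times U'')/G$ maps to $BG'$.

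Concretely, the main step is the following chain of quotients. Form $W := U'\times U''$ with the diagonal $G$-action; since $G$ acts freely on $U'$ (after shrinking), it acts freely on $W$, and $W/G = BG$ up to stable birational equivalence by the no-name lemma. Now quotient $W$ first by $G'$: since $G'$ acts freely on $U'$ and trivially on $U''$ (as $U''$ is a $G''$-representation pulled back), $W/G' = (U'/G')\times U'' = BG'\times U''$. The residual $G'' = G/G'$-action on $BG'\times U''$ is the trivial action on the first factor and the given action on $U''$, so $(BG'\times U'')/G'' = BG'\times (U''/G'') = BG'\times BG''$ — here I use that $U''\to U''/G''=BG''$ is a $G''$-torsor and that the product action quotients factor-wise. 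Hence $BG = W/G = (W/G')/G'' = BG'\times BG''$ up to stable birational equivalence. Finally, since $G''$ is special, $BG''$ is retract rational; and since $G''$ is moreover rational as a variety, $BG'' = U''/G''$ is in fact stably rational (the standard $G''$-torsor $U''\to BG''$ is Zariski-locally trivial because $G''$ is special, hence $U''$ is birational to $BG''\times G''$, and $U''$ is rational as an open subset of affine space, so $BG''\times G''$ is rational, so $BG''$ is stably rational). Therefore $BG\sim_{\mathrm{st.bir.}} BG'\times BG''\sim_{\mathrm{st.bir.}} BG'$, which is the claim.

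The step I expect to be the main obstacle is justifying the factorization of quotients $W/G = (W/G')/(G/G')$ in a way that is valid for non-smooth, non-reduced finite group schemes and respects the torsor structure — in particular checking that $W/G'\to W/G$ is itself a $G''$-torsor and that $W/G' \cong BG'\times U''$ $G''$-equivariantly. This requires care with the construction of geometric quotients of affine schemes by finite group schemes (as recalled in Section~\ref{sec:pre}, via $\Spec$ of invariants) and with the exactness of $1\to G'\to G\to G''\to 1$; one needs that taking $G'$-invariants of $H^0(W,\scrO_W)$ is compatible with the residual $G''$-action and that freeness of the $G$-action on $W$ descends to freeness of the $G''$-action on $W/G'$. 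Once the quotient bookkeeping is set up correctly, the remaining arguments (specialness giving Zariski-local triviality of the $G''$-torsor, rationality of $G''$ and of $U''$ giving stable rationality of $BG''$, and invariance of stable birational type under $\times\A^n$) are routine given the results already quoted in the excerpt.
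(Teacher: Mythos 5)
There is a genuine gap, and it sits at the central step of your argument. After forming $W=U'\times U''$ and taking the partial quotient $W/G'=(U'/G')\times U''$, you assert that the residual $G''=G/G'$-action is \emph{trivial} on the first factor $U'/G'$. That is false in general: $U'$ is an open subset of a generically free representation of $G$, so all of $G$ acts on $U'$, and the induced action of $G/G'$ on $U'/G'$ is nontrivial (already for $G'=1$ it is the original $G$-action). Hence $(W/G')/G''$ is not $BG'\times(U''/G'')$; it is the quotient of $BG'\times U''$ by a genuinely diagonal action, and your claimed identity $BG\sim_{\mathrm{st.bir.}}BG'\times BG''$ does not follow. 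Note that up to this point you have not used the specialness of $G''$ at all, so if the bookkeeping were correct it would prove the product decomposition for \emph{every} extension of finite group schemes; running it down a composition series of a finite $p$-group would make every $B_kG$ stably rational, contradicting Saltman's theorem quoted in the introduction. So the step cannot be repaired by quotient bookkeeping alone — specialness must enter exactly there.

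The paper's proof is shorter and shows where the hypothesis is needed: take a single standard $G$-torsor $U\to U/G$; then $U\to U/G'$ is a standard $G'$-torsor and the intermediate map $U/G'\to U/G$ is a $G''$-torsor. Specialness of $G''$ makes this torsor Zariski-locally trivial, so $U/G'$ is birational to $(U/G)\times G''$, and rationality of $G''$ then gives $BG'\sim_{\mathrm{st.bir.}}BG$. Your proposal can be salvaged along the same lines: instead of claiming the product decomposition, observe that $W/G'\to W/G$ is a $G''$-torsor (freeness of the $G$-action on $W$ descends), invoke specialness to get $W/G'$ birational to $(W/G)\times G''$, and use that $W/G'\simeq BG'\times U''$ as a variety is stably birational to $BG'$ while $W/G$ is stably birational to $BG$ by the no-name lemma; rationality of $G''$ and $U''$ then finishes. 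Your final portion (specialness plus rationality of $G''$ implying $BG''$ stably rational) is correct but is deployed only after the flawed step, so it does not rescue the argument as written. A minor further point: the proposition is stated, and later applied (Corollary \ref{cor:special G_r}), for affine group schemes of finite type that need not be finite, so the aside ``which exists since $G$ is finite'' should be replaced by the general existence of generically free representations.
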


\begin{proof}
Let $U\to U/G$ be a standard $G$-torsor. Then the partial quotient $U\to U/G'$ is a standard $G'$-torsor. Note that the natural map $U/G'\to U/G$ is a $G''$-torsor. However, as $G''$ is special, the $G''$-torsor $U/G'\to U/G$ is locally trivial in the Zariski topology, hence $U/G'$ is birationally equivalent to $(U/G)\times G''$. As $G''$ is rational as a variety, $(U/G)\times G''$ is stably birationally equivalent to $U/G$. Thus, it follows that $U/G'$ is stably birationally equivalent to $U/G$. This completes the proof. 
\end{proof}

\begin{cor}\label{cor:special G_r}
Let $k=\overline{k}$ be an algebraically closed field of characteristic $p>0$. If $G$ is special over $k$, then for any integer $r>0$, the classifying space $B G_{(r)}$ for the $r$-th Frobenius kernel $G_{(r)}$ is retract rational. 
\end{cor}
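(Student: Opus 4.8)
The plan is to exhibit an exact sequence relating the Frobenius kernel $G_{(r)}$ to a quotient that is special and rational, and then invoke Proposition \ref{prop:special G''} together with Remark \ref{rem:special implies ret rat}. Concretely, for a special group scheme $G$ over $k=\overline{k}$ we have by Remark \ref{rem:special}(1) that $G$ is smooth and connected, so the $r$-th relative Frobenius morphism $F^{(r)}\colon G\to G^{(p^r)}$ is faithfully flat with kernel the infinitesimal group scheme $G_{(r)}$. This yields a short exact sequence of affine algebraic $k$-group schemes
\[
1\lto G_{(r)}\lto G\xrightarrow{~F^{(r)}~} G^{(p^r)}\lto 1.
\]
Here $G^{(p^r)}$ is the base change of $G$ along the $p^r$-power Frobenius of $k$; since $k=\overline{k}$ is perfect this Frobenius is an automorphism of $k$, so $G^{(p^r)}$ is isomorphic (as an abstract $k$-scheme, and in fact as a $k$-group scheme via the inverse Frobenius twist) to $G$ itself.

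The key observations are then: first, $G^{(p^r)}\simeq G$ is special, because being special is preserved under the Frobenius twist by a perfect field (the defining cohomological conditions in Definition \ref{def:special} are insensitive to such a base change); and second, $G^{(p^r)}$ is rational as a variety over $k$. The latter holds because $G$ is special, hence by Remark \ref{rem:special}(1) smooth and connected, and a smooth connected special group scheme over an algebraically closed field is rational — indeed one can see this directly in the cases of interest ($\GL_n$, $\SL_n$, $\mathrm{Sp}_{2n}$, and smooth connected split solvable groups are all rational varieties), or deduce it from the fact that a special group is a successive extension of $\G_a$'s and $\G_m$'s by Remark \ref{rem:special}(2) and Example \ref{ex:special}, combined with the local triviality in the Zariski topology of the resulting torsors which forces rationality. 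Either way, $G^{(p^r)}$ is a rational variety.

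Applying Proposition \ref{prop:special G''} to the displayed exact sequence with $G'=G_{(r)}$, $G''=G^{(p^r)}$, we conclude that $BG_{(r)}$ and $BG$ are stably birationally equivalent. But $BG$ is retract rational by Remark \ref{rem:special implies ret rat}, and retract rationality depends only on the stably birational equivalence class by Remark \ref{rem:def ret rat}. Hence $BG_{(r)}$ is retract rational, as claimed.

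I expect the only genuine point requiring care is the claim that a smooth connected special group scheme over $k=\overline{k}$ is rational as a variety (equivalently, that $G^{(p^r)}$ is rational), since Proposition \ref{prop:special G''} needs the quotient to be rational, not merely special. For the standard classes in Example \ref{ex:special} this is classical, and for general special $G$ it follows from the extension structure; one should simply make sure the Frobenius twist $G^{(p^r)}$ inherits both "special" and "rational" from $G$, which it does because $k$ is perfect.
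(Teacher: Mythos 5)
Your proposal follows essentially the same route as the paper: the exact sequence $1\to G_{(r)}\to G\xrightarrow{F^{(r)}}G^{(p^r)}\to 1$, the observation that $G^{(p^r)}$ is special and rational, and then Proposition \ref{prop:special G''} together with Remark \ref{rem:special implies ret rat} and Remark \ref{rem:def ret rat}. One caveat: your fallback justification that "a special group is a successive extension of $\G_a$'s and $\G_m$'s" is false in general (e.g.\ $\SL_n$ and ${\rm Sp}_{2n}$ are special but not solvable); the correct and sufficient justification, which is what the paper cites (Borel, 14.14 Remark), is that \emph{any} smooth connected affine group scheme over an algebraically closed field is rational as a variety, so $G^{(p^r)}$ is rational without appealing to any structural decomposition of special groups.
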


\begin{proof}
We have the exact sequence of affine $k$-group schemes of finite type 
\[
1\lto G_{(r)}\lto G\xrightarrow{~F^r~} G^{(p^r)}\lto 1.
\] 
As $k$ is algebraically closed, the smooth affine group scheme $G^{(p^r)}$ is rational as a variety over $k$ (cf.\cite[14.14 Remark]{Borel}). Moreover, by the assumption, $BG$ and $BG^{(p^r)}$ are retract rational (cf. Remark \ref{rem:special implies ret rat}). By Proposition \ref{prop:special G''}, $BG_{(r)}$ is stably birationally equivalent to $BG$. Therefore, $BG_{(r)}$ is retract rational as desired. 
\end{proof}

\begin{rem}
Let $G$ be an arbitrary smooth connected affine group scheme over an algebraically closed field $k=\overline{k}$ of characteristic $p>0$. Then for any integer $r>0$, the classifying space $B G_{(r)}$ of the $r$-th Frobenius kernel $G_{(r)}$ is separably unirational. 
Indeed, let $U\to U/G$ be a standard $G$-torsor. Then the partial quotient $U\to U/G_{(r)}$ gives a standard $G_{(r)}$-torsor. We have to show that $U/G_{(r)}$ is separably unirational. Note that the natural map $U/G_{(r)}\to U/G$ is a $G^{(p^r)}$-torsor. Consider the pull-back $X\to U$ of the $G^{(p^r)}$-torsor along the standard $G$-torsor $U\to U/G$,
\begin{equation*}
\begin{xy}
\xymatrix{\ar@{}[dr]|\square
X\ar[r]\ar[d]&U/G_{(r)}\ar[d]\\
U\ar[r]&U/G.
}
\end{xy}
\end{equation*}  
As $U\to U/G$ factors through $U/G_{(r)}$, the $G^{(p^r)}$-torsor $X\to U$ admits a section, hence it is trivial, i.e.\ $X\simeq U\times G^{(p^r)}$. As $G^{(p^r)}$ is a smooth connected algebraic group scheme over an algebraically closed field $k$, it is rational over $k$ (cf.\cite[14.14 Remark]{Borel}). Therefore, $X$ is rational over $k$. As the projection $X\to U/G_{(r)}$ is a smooth dominant morphism, this implies that $U/G_{(r)}$ is separably unirational. 
\end{rem}

\subsection{Trigonalizable group schemes}\label{sec:trig}

In this subsection, we will discuss the retract rationality of $BG$ for finite {\it trigonalizable} group schemes $G$ in positive characteristic $p>0$. 

\begin{definition}(cf.\cite[IV,\S2, 3.1]{DG}\cite[Definition 1.3]{TV13})
An affine group scheme $G$ of finite type over a field $k$ is said to be \textit{trigonalizable} if it has a normal unipotent subgroup scheme $G_{\rm uni}$ such that $G/G_{\rm uni}$ is diagonalizable. 
\end{definition}

\begin{thm}\label{thm: ret rat trig}
Let $G$ be a finite group scheme over a perfect field $k$ of characteristic $p>0$. If $G$ is trigonalizable,  the classifying space $BG$ is retract rational.  
\end{thm}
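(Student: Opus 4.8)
The plan is to reduce the statement to the two already-established facts of the section: Theorem~\ref{thm-int: ret rat trig}'s building blocks, namely that $B\G_{a,k}$ and $B\G_{m,k}$ (equivalently, $B$ of any $k$-split solvable smooth group) are retract rational via specialness (Remark~\ref{rem:special implies ret rat}), and the characterization of retract rationality through lifting of torsors (Proposition~\ref{prop:ret rat}, conditions (e)--(g)). So I would argue that, for a trigonalizable finite $k$-group scheme $G$, the surjectivity of $H^1_{\fppf}(R,G)\to H^1_{\fppf}(R/\frkm,G)$ holds for every local $k$-algebra $(R,\frkm)$ with infinite residue field, which by Proposition~\ref{prop:ret rat}(e) gives retract rationality of $BG$.

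First I would set up the d\'evissage. By definition there is an exact sequence $1\to G_{\rm uni}\to G\to D\to 1$ with $G_{\rm uni}$ finite unipotent and $D$ finite diagonalizable. Over a perfect field of characteristic $p$, a finite unipotent group scheme has a composition series whose successive quotients are among $\Z/p\Z$, $\alpha_p$, and $\mu_{p}$-free unipotent pieces — more usefully, each graded piece embeds into $\G_{a}$ (for connected/infinitesimal unipotent one uses that it is a closed subgroup of a product of Witt groups, hence of iterated extensions of $\G_a$; for the \'etale part one uses that $k$ is perfect so the \'etale unipotent part is a constant $p$-group, again filtered by $\Z/p\Z\hookrightarrow\G_a$). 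Likewise $D$ embeds in a torus $\G_m^r$. The point of passing to $\G_a$ and $\G_m$ is that these are \emph{special}, so their $H^1$ over any local ring vanishes, and Proposition~\ref{prop:special G''} tells us that quotienting a classifying space by a special rational group preserves stable birational type.

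The key step is then an inductive argument on the length of $G$. For the base cases $G=\alpha_p,\ \Z/p\Z,\ \mu_p$: each sits in an exact sequence $1\to G\to H\to H/G\to 1$ with $H\in\{\G_a,\G_m\}$ special and rational, so by Proposition~\ref{prop:special G''} $BG$ is stably birational to $BH$, which is retract rational (Remark~\ref{rem:def ret rat} handles the stable-birational invariance). For the inductive step, given $1\to G'\to G\to G''\to 1$ with $G'$ one of the simple pieces and $G''$ of smaller length, I would use the long exact cohomology sequence together with the already-known surjectivity for $G'$ and (by induction) for $G''$: a class in $H^1_{\fppf}(R/\frkm,G)$ maps to a class in $H^1_{\fppf}(R/\frkm,G'')$ which lifts to $H^1_{\fppf}(R,G'')$; after twisting $G'$ by a lift of the $G''$-torsor (the twisted form $_{}G'$ is again, over the relevant base, an extension-of-$\G_a$-or-$\G_m$ type object whose $H^1$ still surjects — here one needs that twisting preserves specialness of the relevant building blocks, which is automatic since inner forms of $\G_a$ and of $\mu_p,\alpha_p$ over a local ring are handled by the same vanishing), the obstruction to lifting the original class lives in a quotient of $H^1_{\fppf}(R,{}_{}G')$ which surjects onto its special fiber. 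Chasing the diagram gives the lift, completing the induction.

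The main obstacle I anticipate is the twisting/exactness bookkeeping in the inductive step: the sequence $1\to G'\to G\to G''\to 1$ need not be central, so the ``long exact sequence'' is only a sequence of pointed sets and one must argue fiberwise about twisted forms ${}_{\xi}G'$ of the subgroup, checking that the needed surjectivity $H^1(R,{}_{\xi}G')\to H^1(R/\frkm,{}_{\xi}G')$ survives twisting. I would address this by noting that the \emph{twists we encounter are themselves trigonalizable} (an inner form of a trigonalizable group scheme over a field is trigonalizable, and this property is inherited suitably over local rings), so the statement to be proved is stable under the twisting operation and the induction is legitimate; alternatively, one can avoid twisting entirely by filtering $G$ by a \emph{normal} series with central simple quotients after base change and descending, but the cleanest route is the torsor-lifting formulation of Proposition~\ref{prop:ret rat}(e), which only requires surjectivity and is robust under the devissage. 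A secondary technical point is handling the \'etale part of $G_{\rm uni}$ and $D$ when $k$ is merely perfect (not algebraically closed); here the constant/diagonalizable structure over perfect $k$ and the fact that finite constant $p$-groups are trigonalizable with cyclic quotients of order $p$ keeps everything within the inductive framework.
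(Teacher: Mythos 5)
Your overall strategy (d\'evissage plus the torsor-lifting criterion of Proposition \ref{prop:ret rat}) is the same as the paper's, and your treatment of the diagonalizable quotient and of the base cases $\alpha_p,\ \Z/p\Z,\ \mu_p$ via Proposition \ref{prop:special G''} is fine. But the inductive step has two genuine gaps. First, to induct along $1\to G'\to G\to G''\to 1$ you need $G'$ to be normal in $G$, not merely in $G_{\rm uni}$: a composition series of the unipotent kernel with simple quotients is in general not $G$-stable, and there need not exist a simple subgroup of $G_{\rm uni}$ that is normal in $G$ (also, $\mu_p$ cannot occur as a factor of the unipotent part, so your list of pieces is off). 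The paper avoids this by filtering $G_{\rm uni}$ only by \emph{characteristic} subgroups --- the derived series (to reduce to an abelian kernel) and then the images of the iterated Verschiebung (to reduce further to an abelian kernel whose Cartier dual has height one) --- and then it deliberately does \emph{not} reduce to simple pieces; the whole height-one-dual abelian kernel is handled at once.

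Second, and more seriously, your diagram chase omits the actual content of the lifting step. After lifting the $G''$-torsor to $R$, the $G$-torsors over $R$ inducing it are the objects of a gerbe banded by the twisted kernel $N=G'\wedge^{G''}P''$; you must (a) show this gerbe is neutral over $R$, i.e.\ kill an obstruction in $H^2_{\fppf}(R,N)$ --- knowing the corresponding fiber is nonempty over the residue field does not help unless $H^2_{\fppf}(R,N)$ vanishes or injects into $H^2_{\fppf}(R/\frkm,N_0)$ --- and (b) prove that $H^1_{\fppf}(R,N)\to H^1_{\fppf}(R/\frkm,N_0)$ is surjective for the \emph{twisted} form $N$, which is not one of your standard models and is a priori only a finite flat abelian unipotent group scheme with height-one dual. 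Your proposal asserts that twisting is harmless and that the obstruction ``lives in a quotient of $H^1(R,{}_{\xi}G')$,'' but gives no argument; this is exactly where the paper's key input enters: the canonical resolution $0\to N\to\omega_{N^D}\to\omega_{N^D}^{(p)}\to 0$, in which $\omega_{N^D}\simeq\G_{a}^n$ over the local base, yields simultaneously $H^2_{\fppf}(R,N)=0$ (so the gerbe of liftings is $\cB N$) and the surjectivity in (b). Without an argument of this kind valid for arbitrary twists, and without a $G$-normal filtration, the induction as written does not close.
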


\begin{proof}
Let $G_{\rm uni}\triangleleft\, G$ be a normal unipotent subgroup such that $G/G_{\rm uni}$ is  diagonalizable. There exists an isomorphism of $k$-group schemes $G/G_{\rm uni}\simeq\mathrm{Diag}_k(M)$ for some finite abelian group $M$. By the Kummer theory, $\mathrm{Diag}_k(M)$ is the kernel of a morphism $\G_m^n\to\G_m^n$ for some $n$, Proposition \ref{prop:special G''} implies that the classifying space $B(G/G_{\rm uni})\simeq B(\mathrm{Diag}_k(M))$ is retract rational. Therefore, the theorem is a consequence of the next lemma.  
\end{proof}

\begin{lem}\label{lem: trig}
Suppose that $1\to G'\to G\to G''\to 1$ is an exact sequence of affine group schemes of finite type over a perfect field $k$ of characteristic $p>0$ which satisfies the following conditions. 
\begin{enumerate}
\renewcommand{\labelenumi}{(\roman{enumi})}
\item $G'$ is finite and unipotent.  
\item $BG''$ is retract rational.
\end{enumerate}
Then the classifying space $BG$ is retract rational. 
\end{lem}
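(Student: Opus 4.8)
The plan is to verify criterion (e) of Proposition~\ref{prop:ret rat} for $X=BG$: for every local $k$-algebra $(R,\frkm)$ whose residue field $\kappa:=R/\frkm$ is an infinite field, the natural map $H^1_{\fppf}(R,G)\to H^1_{\fppf}(\kappa,G)$ is surjective. Fix such an $R$ and a class $\xi\in H^1_{\fppf}(\kappa,G)$, and let $\bar\xi\in H^1_{\fppf}(\kappa,G'')$ be its image. Since $BG''$ is retract rational and $\kappa$ is an infinite field, criterion (e) applied to $G''$ produces a class $\eta\in H^1_{\fppf}(R,G'')$ with $\eta|_\kappa=\bar\xi$. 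It then remains to (i) lift $\eta$ to some $\xi_0\in H^1_{\fppf}(R,G)$, and (ii) modify $\xi_0$ within the fibre of $H^1_{\fppf}(R,G)\to H^1_{\fppf}(R,G'')$ over $\eta$ so that the modified class restricts to $\xi$ over $\kappa$.

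Both (i) and (ii) will follow from a cohomological triviality property that I would establish first: \emph{if $(R,\frkm)$ is a local $k$-algebra with residue field $\kappa$ and $N$ is a finite unipotent $R$-group scheme, then (a) for every extension $1\to N\to\mathcal G\to\mathcal Q\to1$ of affine $R$-group schemes of finite type the map $H^1_{\fppf}(R,\mathcal G)\to H^1_{\fppf}(R,\mathcal Q)$ is surjective, and (b) the map $H^1_{\fppf}(R,N)\to H^1_{\fppf}(\kappa,N_\kappa)$ is surjective.} The proof is by induction on the order of $N$ via dévissage: using the descending central series of $N$ one reduces to $N$ commutative, and then the connected--étale sequence, the subgroup schemes annihilated by both Frobenius and Verschiebung (which over the local ring $R$ are vector group schemes $\alpha_p^{\,r}$), and the analogous étale Artin--Schreier steps reduce everything to the cases $N=\alpha_p^{\,r}$ or $N$ a finite étale $p$-group killed by $p$. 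In those cases $H^1_{\fppf}(R,N)$ is computed from a (possibly twisted, but linear) Artin--Schreier type sequence $0\to N\to\mathbb V(\mathcal E)\xrightarrow{\psi}\mathbb V(\mathcal E)\to0$ with $\mathcal E$ a finite free $R$-module; since $\Spec R$ is affine one has $H^i_{\fppf}(R,\G_a)=H^i(\Spec R,\sO)=0$ for $i\ge1$, hence $H^{\ge1}_{\fppf}(R,\mathbb V(\mathcal E))=0$, and the claims drop out (for (b) using that $R\to\kappa$ is surjective). The key point is that no genuinely nontrivial form of $\G_a$ survives over the local ring $R$.

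Granting this, (i) follows from (a) applied to $1\to G'_R\to G_R\to G''_R\to1$, the obstruction to lifting $\eta$ being assembled from $H^2_{\fppf}(R,-)$ of the $G''$-equivariantly filtered commutative graded pieces of $G'$, all of which vanish in the course of proving (a). For (ii), set $\xi_0':=\xi_0|_\kappa$; since $\xi_0'$ and $\xi$ have the same image $\bar\xi$ in $H^1_{\fppf}(\kappa,G'')$, the twisting description of the fibres of $H^1_{\fppf}(\kappa,G)\to H^1_{\fppf}(\kappa,G'')$ provides a class $c\in H^1_{\fppf}(\kappa,{}^{\xi_0'}\!G')$ carrying $\xi_0'$ to $\xi$, where ${}^{\xi_0'}\!G'$ is the inner twist of $G'$ by $\xi_0'$, again a finite unipotent $\kappa$-group scheme. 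Applying (b) to the finite unipotent $R$-group scheme ${}^{\xi_0}\!G'$, lift $c$ to $\tilde c\in H^1_{\fppf}(R,{}^{\xi_0}\!G')$; then the class $\tilde\xi\in H^1_{\fppf}(R,G)$ obtained from $\xi_0$ and $\tilde c$ through the twisting bijection satisfies $\tilde\xi|_\kappa=\xi$, which completes the verification of (e) and hence the proof.

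The main obstacle is the cohomological triviality statement, and inside it the bookkeeping with non-abelian $H^1$ and the Giraud $H^2$ (bands): one must organise the dévissage of $G'$ and of its inner twists so that every successive lifting and obstruction problem reduces to ordinary $\fppf$ cohomology of commutative group schemes built from $\alpha_p^{\,r}$'s and étale $p$-groups, and then exploit that $\Spec R$ is affine and $R$ is local to make all of it vanish; keeping track of twisted forms and showing they remain tame over $R$ is the delicate part. Perfectness of $k$ enters in guaranteeing the needed structural decompositions (connected--étale splittings over $\kappa$, and the shape of the graded pieces).
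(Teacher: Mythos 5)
Your overall strategy coincides with the paper's in its essential mechanism: verify the torsor-lifting criterion of Proposition \ref{prop:ret rat}, lift the induced $G''$-torsor using the retract rationality of $BG''$, and then treat the fibre by d\'evissage of the finite unipotent kernel, killing the obstructions over the local ring by linear (Artin--Schreier type) resolutions and the vanishing of coherent cohomology on affine schemes. Your steps (i)--(ii), with the twisting bijections and their compatibility with reduction to $\kappa$, are the cocycle-level counterpart of the paper's gerbe of liftings of the $G''$-torsor $P''$, banded by the twist of $G'$; that part is fine.

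The genuine weak point is the auxiliary claim you ``would establish first'', in the generality in which you state it. For an arbitrary finite unipotent group scheme $N$ over an arbitrary local $k$-algebra $R$, the d\'evissage tools you invoke are not available: the connected--\'etale sequence exists over henselian local rings, not over general local rings (and $\kappa$ need not be perfect even though $k$ is, so no splitting either); images of Verschiebung and kernels of $F$ and $V$ of an arbitrary finite flat $R$-group scheme need not be flat closed subgroup schemes; the \'etale layers are genuinely nontrivial forms of $(\Z/p)^r$ over $R$, so they must be handled by a twisted resolution rather than a literal Artin--Schreier sequence; and ``killed by $F$ and $V$ implies $\alpha_p^{\,r}$ over local $R$'' needs an argument (such an $N$ is a form of $\alpha_p^{\,r}$, trivialized because $\uAut(\alpha_p^{\,r})\simeq\GL_r$ and projective modules over a local ring are free). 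All of this is repaired exactly as in the paper: perform the d\'evissage on $G'$ itself over $k$ (derived series, then images of Verschiebung --- both characteristic, hence normal in $G$), reducing by induction on the order of $G'$ to the case where $G'$ is abelian with Cartier dual of height one; then over $R$ the only band that appears is the abelian twist $N=G'\wedge^{G''}P''$, which admits the resolution $0\to N\to\omega_{N^D}\to\omega_{N^D}^{(p)}\to 0$ --- a single sequence covering both your $\alpha_p$-type and \'etale cases at once --- giving $H^2_{\fppf}(R,N)=0$ and the surjectivity of $H^1_{\fppf}(R,N)\to H^1_{\fppf}(\kappa,N_\kappa)$. If you restrict your claims (a) and (b) to twists of $G'_R$ and of its characteristic subquotients, so that all filtrations are defined over $k$ and twisted along the torsor, your argument closes up and becomes essentially the paper's proof; as written, the claims outrun the proof you sketch.
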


\begin{proof}
By condition (i), the group scheme $G'$ is a solvable group scheme. If we define the subgroups $G'^{n}<G'$ inductively as follows
\[
G'^0\Def G',\quad G'^n\Def [G'^{n-1},G'^{n-1}]\quad (n\ge 1),
\] 
we have $G'^N=1$ for sufficiently large $N>0$, and we obtain a descending normal series of subgroup schemes
\[
1=G'^N\le G'^{N-1}\le\cdots\le G'^1\le G'^0=G'
\] 
such that the successive quotients $G'^{n-1}/G'^n$ are abelian group schemes. As the commutator subgroup scheme $G'^1$ is a characteristic subgroup scheme of $G'$, it is a  normal subgroup scheme in $G$. Hence, we obtain the following commutative diagram of short exact sequences.
\[
\begin{xy}
\xymatrix{
&1\ar[d]&1\ar[d]&&&\\
&G'^1\ar@{=}[r]\ar[d]&G'^1\ar[d]&&\\
1\ar[r]&G'\ar[r]\ar[d]&G\ar[r]\ar[d]&G''\ar[r]\ar@{=}[d]&1\\
1\ar[r]&G'/G'^1\ar[r]\ar[d]&G/G'^1\ar[r]\ar[d]&G''\ar[r]&1\\
&1&1&&&
}
\end{xy}
\]
This, together with induction on the order of $G'$, allows us to reduce the problem to the case where $G'$ is abelian. Indeed, by applying the lemma to the exact sequence 
\[
1\to G'/G'^1\to G/G'^1\to G''\to 1
\] 
where $G'/G'^1$ is abelian, we can conclude that $B(G/G'^1)$ is retract rational. Hence, it suffices to show the lemma for the exact sequence $1\to G'^1\to G\to G/G'^1\to 1$. However, as the order of $G'^1$ is strictly smaller than the order of $G'$, the induction hypothesis tells us that $BG$ is retract rational. Therefore, we are reduced to the case when $G'$ is abelian.

Suppose that $G'$ is a finite, abelian and unipotent group scheme over $k$. Let us denote by $V\colon G'^{(p)}\to G'$ the Verschiebung for $G'$, which is the Cartier dual $V\Def F^D$ to the relative Frobenius $F\colon G'^D\to {G'^{D}}^{(p)}$. As $G'$ is unipotent, its dual $G'^D$ is a local group scheme and we have $V^N=0$ on $G'$ for sufficiently large $N>0$ (cf.\cite[Section 1]{Tossici}). Thus, we obtain a series of subgroup schemes
\[
1=V^N(G'^{(p^N)})\le V^{N-1}(G'^{(p^{N-1})})\le\cdots\le V(G'^{(p)})\le G',
\]
where the successive quotients $V^{n-1}(G'^{(p^n-1)})/V^n(G'^{(p^n)})$ are dual to group schemes of height at most one $\Ker(F\colon F^{n-1}({G'^D}^{(p^n-1)})\to F^n({G'^D}^{(p^n)}))$. As the Frobenius, hence the Verschiebung, commutes with any homomorphism of commutative group schemes, the image of Verschiebung is a characteristic subgroup scheme of $G'$. Hence, we obtain the following commutative diagram of short exact sequences.
\[
\begin{xy}
\xymatrix{
&1\ar[d]&1\ar[d]&&&\\
&V(G'^{(p)})\ar@{=}[r]\ar[d]&V(G'^{(p)})\ar[d]&&\\
1\ar[r]&G'\ar[r]\ar[d]&G\ar[r]\ar[d]&G''\ar[r]\ar@{=}[d]&1\\
1\ar[r]&G'/V(G'^{(p)})\ar[r]\ar[d]&G/V(G'^{(p)})\ar[r]\ar[d]&G''\ar[r]&1\\
&1&1&&&
}
\end{xy}
\]
Thus, by induction on the order of $G'$ as in the previous paragraph, the problem is reduced to the case where $V=0$ on $G'$, or equivalently to the case where $G'^D$ is of height one.

Suppose that $G'$ is finite, abelian and unipotent such that the Cartier dual $G'^D$ is of height one. Fix a local $k$-algebra $(R,\fm)$ and put $X=\Spec R$ and $X_0=\Spec R/\fm$. It suffices to show that the reduction map
\begin{equation*}
H^1_{\fppf}(X,G)\lto H^1_{\fppf}(X_0,G)
\end{equation*}
is surjective (cf.\ Proposition \ref{prop:ret rat}). The idea of the following argument comes from the work of Tossici--Vistoli for the essential dimension of trigonalizable group schemes \cite[Theorem 1.4]{TV13}. 
Fix a $G$-torsor $P_0\to X_0$ and set $P''_0\Def P_0/G'$, which is a $G''$-torsor over $X_0$. As $B G''$ is retract rational, $P''_0$ can be lifted to a $G''$-torsor $P''\to X$. Let $\cB G$ (respectively $\cB G''$) be the classifying stack of $G$ (respectively $G''$). Then the natural map $\cB G\to\cB G''$ is an fppf gerbe which is banded by the abelian group scheme $G'$. If we define the fppf gerbe $\mathcal{G}$ over $X$ by
\begin{equation*}
\mathcal{G}\Def\cB G\times_{\cB G'',P''}X.
\end{equation*}
Then we have 
\begin{equation*}
\calG_0\Def\calG\times_X X_0=\cB G\times_{\cB G'',P''_0}X_0. 
\end{equation*}
By definition, the gerbe $\calG\to X$ classifies $G$-torsors over $X$ which are liftings of $P''$.  Similarly, $\calG_0$ classifying the liftings of $P''_0$. The fixed $G$-torsor $P_0\to X_0$ then defines a section $X_0\to\calG_0$. Therefore, the surjectivity of the map $H^1_{\fppf}(X,G)\to H^1_{\fppf}(X_0,G)$ is reduced to showing that the reduction map
\begin{equation*}
\calG(X)\lto\calG_0(X_0)
\end{equation*}
is essentially surjective. For the essential surjectivity, note that the gerbe $\calG\to X$ is banded by the finite flat abelian unipotent $X$-group scheme $N=G'\wedge^{G''} P''$ whose Cartier dual is of height one. Such an $N$ admits a resolution by locally free sheaves over $X=\Spec R$ as follows (cf.\cite{Milne}),
\begin{equation}\label{eq:omega_N^D}
0\lto N\lto \omega_{N^D}\lto \omega_{N^D}^{(p)}\lto 0. 
\end{equation}
As $X=\Spec R$ is the spectrum of a local ring, we have $\omega_{N^D}\simeq\G_{a,X}^{n}$ for some $n>0$. In particular, we have  $H^2_{\fppf}(X,N)=0$, hence $\calG\simeq\cB N$. Therefore, it suffices to show that the reduction map
\begin{equation*}
H^1_{\fppf}(X,N)\lto H^1_{\fppf}(X_0,N_0)
\end{equation*}
is surjective, where $N_0\Def N\times_X X_0$. However, again by using the resolution (\ref{eq:omega_N^D}), this is immediate from the commutativity of the diagram
\begin{equation*}
\begin{xy}
\xymatrix{
R^n\ar@{->>}[r]\ar@{->>}[d]&H^1_{\fppf}(X,N)\ar[d]\\
(R/\fm)^n\ar@{->>}[r]&H^1_{\fppf}(X_0,N_0).
}
\end{xy}
\end{equation*}
This completes the proof. 
\end{proof}


\section{Simple group schemes associated with the generalized Witt algebras}\label{sec:Witt alg}

\subsection{The automorphisms of the generalized Witt algebras}\label{sec:autom}

In this subsection, we will recall the definition of the \textit{generalized Witt algebras} $W(m;\bm{n})$ and introduce the automorphism group schemes  $G(m;\bm{n})$ of them and their first Frobenius kernels $\Gamma(m;\bm{n})$. 
Moreover, toward the decomposition theorem for $G(m;\bm{n})$ (cf.\ Theorem \ref{thm:triangulation}), we will classify automorphisms of the generalized Witt algebras into three types. 

Let $k=\overline{k}$ be an algebraically closed field of characteristic $p>0$. Let $m\ge 1$ be a positive integer. Let $\Z_{\ge 0}^m$ be the set of $m$-tuples $\alpha=(\alpha_1,\dots,\alpha_m)$ of non-negative integers $\alpha_1,\dots,\alpha_m\ge 0$. We put $\bm{1}\Def (1,\dots,1)\in\Z_{\ge 0}^m$. We denote by $\widetilde{A}(m)\Def\Z\langle X^{(\alpha)}\,|\,\alpha\in\Z_{\ge 0}^m\rangle$ the divided power polynomial ring of variables $X_1,\dots,X_m$ with coefficients in $\Z$. Namely, $\widetilde{A}(m)$ is the $\Z$-subalgebra of the polynomial ring $\Q[X_1,\dots,X_m]$ which is generated as an additive subgroup by all the monomials $X^{(\alpha)}$ of the form
\[
X^{(\alpha)}=\dfrac{X_1^{\alpha_1}\cdots X_m^{\alpha_m}}{\alpha_1!\cdots\alpha_m!},\quad \alpha=(\alpha_1,\dots,\alpha_m)\in\Z_{\ge 0}^m.
\]
We define the $k$-algebra $A(m)$ to be $A(m):=\widetilde{A}(m)\otimes_{\Z}k$ and we set $x^{(\alpha)}:=X^{(\alpha)}\otimes 1\in A(m)$ for any $\alpha\in\Z_{\ge 0}^m$. Moreover, for any $m$-tuple $\bm{n}=(n_1,\dots,n_m)\in\Z^m_{>0}$ of positive integers $n_1,\dots,n_m\ge 1$, we define the $k$-subalgebra $A(m;\bm{n})\subset A(m)$ to be the one generated as a $k$-vector subspace by $x^{(\alpha)}$ with $\alpha_i<p^{n_i}$ for all $i=1,\dots,m$, i.e.\
\[
A(m;\bm{n})\Def k\bigl\langle x^{(\alpha)}\,\bigl |\,\alpha=(\alpha_1,\dots,\alpha_m)\in\Z^m_{\ge 0}~\text{with}~\alpha_i<p^{n_i}\bigl\rangle\subset A(m).
\] 
If we define the $k$-algebra map 
\begin{equation}\label{eq:aug}
\varepsilon \colon A(m;\bm{n})\lto k
\end{equation} 
by putting $\varepsilon(x^{(\alpha)})=0$ for any $\alpha\in\Z_{\ge 0}^m$ with $\alpha\neq(0,\dots,0)$, then the ideal $I:=\Ker(\varepsilon)\subset A(m;\bm{n})$ is the maximal ideal with $I^p=0$. Furthermore, the ideal $I$ admits a unique divided power structure $\{I\to I;f\mapsto f^{(r)}\}_{r=1}^{\infty}$ satisfying 
\[
x_i^{(r)}=x^{(r\delta_{i1},\dots,r\delta_{im})}=\dfrac{X_i^r}{r!}\otimes 1\quad(1\le i\le m,~0\le r<p^{n_i})
\]
where $\delta_{ij}$ is the Kronecker delta. If the $p$-adic expansion of $r\ge 0$ is given by
\[
r=r_0+r_1p+\cdots+r_{n_i-1}p^{n_i-1}\quad (0\le r_s<p),
\] 
we have 
\[
x_i^{(r)}=c_{ir}\prod_{s=0}^{n_i-1}(x_i^{(p^s)})^{r_s}
\]
for some $c_{ij}\in k^*$, the mapping $y_{is}\mapsto x_i^{(p^s)}$ defines an isomorphism of $k$-algebras $B(m;\bm{n})\xrightarrow{\simeq}A(m;\bm{n})$ from the truncated polynomial ring $B(m;\bm{n})$ onto $A(m;\bm{n})$, where $B(m;\bm{n})$ is defined to be
\[
B(m;\bm{n})\Def \dfrac{k[y_{is}\,|\,1\le i\le m,0\le s\le n_i-1]}{(y_{is}^p\,|\,1\le i\le m,0\le s\le n_i-1)}.
\]

A $k$-derivation $D\colon A(m;\bm{n})\to A(m;\bm{n})$ is called \textit{special} if it satisfies the condition that
\[
D(f^{(r)})=f^{(r-1)}D(f)\quad (f\in I,r\ge 1).
\]
We define the {\it generalized Witt} algebra $W(m;\bm{n})$ over $k$ to be the Lie subalgebra over $k$ of the derivation algebra $\Der_k(A(m;\bm{n}))$ consisting of special derivations $D$ on $A(m;\bm{n})$. For any $1\le i\le m$, we denote by $\partial_i\in W(m;\bm{n})$ the special derivation satisfying 
\[
\partial_i(x_j)=\delta_{ij}\quad (1\le j\le m).
\]
Then it turns out that $W(m;\bm{n})$ is a free $A(m;\bm{n})$-module with free basis $\{\partial_1,\dots,\partial_m\}$, i.e.
\[
W(m;\bm{n})=\sum_{i=1}^mA(m;\bm{n})\partial_i\subseteq\Der_k(A(m;\bm{n}))
\]
(cf.\cite{Strade}). The generalized Witt algebra $W(m;\bm{n})$ is a finite-dimensional simple Lie algebra over $k$ except in the case where $(p,m,\bm{n})=(2,1,1)$. In the case where $(p,m,\bm{n})\neq (2,1,1)$, the adjoint map $ad\colon W(m;\bm{n})\hookrightarrow\Der(W(m;\bm{n}))$ is bijective if and only if $\bm{n}=\bm{1}$, in only which case the generalized Witt algebra admits a restricted structure $D\mapsto D^{[p]}$. For arbitrary $\bm{n}\in\Z_{>0}^m$, the $p$-envelope $W(m;\bm{n})_{[p]}$ of the generalized Witt algebra $W(m;\bm{n})$ in $\Der_k(W(m;\bm{n}))$ coincides with the Lie algebra $\Der_k(W(m;\bm{n}))$ of $k$-deviations on $W(m;\bm{n})$, i.e.\,
\begin{equation}
W(m;\bm{n})_{[p]}=\Der_k(W(m;\bm{n}))\simeq W(m;\bm{n})\oplus\sum_{i=1}^m\sum_{s=1}^{n_i-1}k\partial_i^{p^s},
\end{equation}
where the last isomorphism is given by the adjoint representation (cf.\cite[Lemma 4]{Wilson}\cite[p.358, Theorem 7.1.2]{Strade}). Henceforth, we consider the derivation algebra $\Der_k(W(m;\bm{n}))$ as a subalgebra of $\Der_k(A(m;\bm{n}))$ as follows,
\begin{equation}
\Der_k(W(m;\bm{n}))=W(m;\bm{n})\oplus\sum_{i=1}^m\sum_{s=1}^{n_i-1}k\partial_i^{p^s}\subset\Der_k(A(m;\bm{n})).
\end{equation}

We define the affine $k$-group scheme of finite type $G(m;\bm{n})$ to be the automorphism group scheme $G(m;\bm{n})\Def\uAut_k(W(m;\bm{n}))$ of the Lie algebra $W(m;\bm{n})$ over $k$. Its Lie algebra is canonically isomorphic to the derivation algebra $\Der_k(W(m;\bm{n}))$ of the Lie algebra $W(m;\bm{n})$, i.e.\,
\begin{equation}
\Lie(G(m;\bm{n}))\simeq\Der_k(W(m;\bm{n}))
\end{equation} 
(cf.\,\cite[Lemma 2.6]{GP}\cite[Section 1]{Milne2022}). Therefore, the first Frobenius kernel 
\[
\Gamma(m;\bm{n})\Def\Ker\left(F\colon G(m;\bm{n})\lto G(m;\bm{n})^{(p)}\right)
\] 
is the finite group scheme of height one associated with the restricted Lie algebra $\Der_k(W(m;\bm{n}))\simeq W(m;\bm{n})_{[p]}$. According to \cite{Viviani}, the group scheme $\Gamma(m;\bm{n})$ is a finite connected non-abelian simple group scheme over $k$ unless $(p,m,\bm{n})=(2,1,1)$.

\begin{definition}(cf.\cite[p.187, Definition]{Waterhouse})
Let $R$ be a $k$-algebra. An $R$-linear automorphism of the algebra $\varphi\in\uAut_k(A(m;\bm{n}))(R)=\Aut_R(A(m;\bm{n})\otimes_k R)$ is said to be a \textit{derivation-automorphism} with respect to $W(m;\bm{n})$ if $\varphi_*(D)=\varphi\circ D\circ \varphi^{-1}\in W(m;\bm{n})\otimes_k R$ for any $D\in W(m;\bm{n})\otimes_k R$. We denote by $\uAut_k(A(m;\bm{n}),W(m;\bm{n}))$ the group scheme of derivation-automorphisms of $A(m;\bm{n})$. 
\end{definition}

\begin{thm}(cf.\cite[Theorem C]{Waterhouse})\label{thm:Aut W} Suppose that $p>3$. Then the natural homomorphism of affine $k$-group schemes $\uAut_k(A(m;\bm{n}),W(m;\bm{n}))\to G(m;\bm{n});\varphi\mapsto\varphi_*$ is an isomorphism of $k$-group schemes. 
\end{thm}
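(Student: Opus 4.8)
The plan is to prove directly that the homomorphism $\Theta\colon\uAut_k(A,W)\to G(m;\bm n)$, $\varphi\mapsto\varphi_*$, is bijective on $R$-points for every $k$-algebra $R$, which for a morphism of affine $k$-group schemes is equivalent to being an isomorphism. Here I abbreviate $A=A(m;\bm n)$, $W=W(m;\bm n)$, $A_R=A\otimes_kR$, $W_R=W\otimes_kR$. Recall that $\uAut_k(A,W)$ is an affine $k$-group scheme of finite type: it is the closed subgroup scheme of $\uAut_k(A)$ consisting of those algebra automorphisms that stabilize the subspace $W\subseteq\Der_k(A)$ under conjugation.

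Injectivity is straightforward. If $\varphi\in\uAut_k(A,W)(R)$ satisfies $\varphi_*=\mathrm{id}_{W_R}$, then, regarded as an $R$-linear operator on $A_R$, $\varphi$ commutes with every element of $W_R=\bigoplus_iA_R\partial_i$. Taking $D=\partial_i$ gives $\varphi\circ\partial_i=\partial_i\circ\varphi$; taking $D=f\partial_i$ for an arbitrary $f\in A_R$ and evaluating the identity $\varphi\circ(f\partial_i)=(f\partial_i)\circ\varphi$ at $x_i$ then yields
\[
\varphi(f)=\varphi\bigl((f\partial_i)(x_i)\bigr)=(f\partial_i)(\varphi(x_i))=f\cdot\varphi(\partial_i(x_i))=f,
\]
where we used $\partial_i(x_i)=1$, $\varphi(1)=1$, $\varphi\circ\partial_i=\partial_i\circ\varphi$, and multiplicativity of $\varphi$. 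Hence $\varphi=\mathrm{id}_{A_R}$, so $\Theta$ is a monomorphism.

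The substance is surjectivity. Given $\psi\in G(m;\bm n)(R)=\Aut_{R\text{-Lie}}(W_R)$, one must reconstruct from the Lie algebra $W$ an $R$-algebra automorphism $\varphi$ of $A_R$ that normalizes $W_R$ and satisfies $\varphi_*=\psi$. The first and essential step is to show that $\psi$ preserves the standard filtration $W_R=(W_{(-1)})_R\supset(W_{(0)})_R\supset(W_{(1)})_R\supset\cdots$, where $W_{(j)}=\bigoplus_iA_{(j+1)}\partial_i$ and $A_{(\ell)}$ is the span of the $x^{(\alpha)}$ of total degree $\geq\ell$. Over a field of characteristic $p>3$ this is the classical intrinsic characterization of the standard filtration of a Cartan-type Lie algebra \cite{Strade}; over a general $R$ it descends from the residue fields, since each $(W_{(j)})_R$ is a free direct summand of $W_R$ and two free summands of a free module that agree after reduction modulo every prime ideal coincide. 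Then $\psi$ induces an element $g\in\GL_m(R)$ on the degree $-1$ graded piece $W_R/(W_{(0)})_R$ together with the corresponding maps on the divided-power graded algebra $\mathrm{gr}\,A_R$; one integrates these, one filtration layer at a time, into the desired $\varphi$ — its linear part being the contragredient action of $g$ on $\bigoplus_iRx_i$, followed by successive unipotent corrections — the obstruction to each correction lying in a module of $A_R$-valued derivations that vanishes because $W$ is free over $A$ and the divided-power structure leaves enough room. One then verifies that $\varphi$ normalizes $W_R$ and $\varphi_*=\psi$. This reconstruction is the content of the automorphism theorems of Wilson and Waterhouse \cite{Wilson,Waterhouse}.

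The main obstacle is the filtration-invariance step, and this is precisely where $p>3$ is indispensable: the statement that every Lie-algebra automorphism of $W(m;\bm n)$ respects the standard filtration relies on the fine structure theory of the generalized Witt algebras and genuinely fails for $p\le3$ — for instance $W(1;1)\cong\mathfrak{sl}_2$ when $p=3$, where the maximal subalgebra $W_{(0)}$ is not distinguished among the subalgebras of its codimension. A secondary difficulty, particular to the group-scheme rather than the classical group statement, is that the layer-by-layer integration must be carried out functorially over an arbitrary base ring $R$, so the relevant obstruction and derivation modules must remain free over $R$; this is what upgrades the group-level identity $\Aut_{k\text{-Lie}}(W)=\uAut_k(A,W)(k)$ to the isomorphism of affine $k$-group schemes asserted here.
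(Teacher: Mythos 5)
The paper itself does not prove this statement: it is imported wholesale from Waterhouse's Theorem C, so what you are really attempting is a reconstruction of Waterhouse's proof. Your injectivity argument is fine. The surjectivity argument, however, has a genuine gap at its very first step, and it is precisely the step that separates the group--scheme statement from the classical field--case theorems of Ree and Wilson. You claim that a Lie algebra automorphism $\psi$ of $W(m;\bm{n})\otimes_kR$ preserves the standard filtration for an arbitrary $k$-algebra $R$, by descending from the residue fields, on the grounds that two free direct summands of a free module agreeing modulo every prime must coincide. Over a non-reduced $R$ this is false (the discrepancy map only has entries in the nilradical), and the failure is not a repairable technicality: there are honest points of $G(m;\bm{n})$ over non-reduced rings that do \emph{not} preserve the standard filtration. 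Take $R=k[\epsilon]$ with $\epsilon^2=0$ and $\psi=\mathrm{id}+\epsilon\,\ad(\partial_i)$, which is the image under $\Theta$ of the algebra automorphism $\mathrm{id}+\epsilon\,\partial_i$ of $A(m;\bm{n})\otimes_kR$ (a $k[\epsilon]$-point of the subgroup $G^-$); it sends $x_i\partial_j\in (W_{(0)})_R$ to $x_i\partial_j+\epsilon\,\partial_j\notin (W_{(0)})_R$. For $\bm{n}\neq\bm{1}$ the points $\mathrm{id}+\epsilon\,\ad(\partial_i^{p^s})$, $1\le s\le n_i-1$, behave the same way and account for the fact that $\Lie(G(m;\bm{n}))=\Der_k(W(m;\bm{n}))=W(m;\bm{n})_{[p]}$ is strictly larger than $\ad\,W(m;\bm{n})$. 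Consequently the filtration-preservation step, and with it the layer-by-layer integration, cannot produce these automorphisms, and your argument at best recovers the statement on points of reduced $k$-algebras, i.e.\ essentially the classical theorem, not the isomorphism of group schemes.

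A correct treatment has to handle the infinitesimal direction by a different mechanism rather than by filtration arguments over $R$: for instance, one combines the classical field-case result (bijectivity on $K$-points for all field extensions $K/k$, where filtration-invariance is legitimate) with a comparison of infinitesimal data, e.g.\ an identification $\Lie\,\uAut_k(A(m;\bm{n}),W(m;\bm{n}))\simeq\Der_k(W(m;\bm{n}))=\Lie\,G(m;\bm{n}))$, using that every derivation of $W(m;\bm{n})$ is induced by a (not necessarily special) derivation of $A(m;\bm{n})$ normalizing $W(m;\bm{n})$, together with enough flatness/finite-presentation bookkeeping to conclude that a monomorphism which is bijective on field-valued points and on Lie algebras is an isomorphism. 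This is in substance what Waterhouse's proof of Theorem C does, and it is the part your sketch is missing.
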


In particular, any automorphism of the generalized Witt algebra $W(m;\bm{n})\otimes_k R$ over any $k$-algebra $R$ is induced by an automorphism of the algebra $A(m;\bm{n})\otimes_k R$.

Now we introduce three subgroup schemes $G^{\bullet}\subset G(m;\bm{n})$ for $\bullet\in\{-,0,+\}$ (cf.\ Definitions \ref{def:G^-}, \ref{def:G^0} and \ref{def:G^+}). We begin with the subgroup scheme $G^-$. For any positive integer $n\ge 1$, we denote by $W_n$ the algebraic group of Witt vectors of length $n$ over $k$. This is a connected smooth abelian unipotent $k$-group scheme whose underlying scheme is isomorphic to the affine space $\A^n_k=\Spec k[t_0,t_1,\dots,t_{n-1}]$. In particular, for any $k$-algebra $R$, we can identify $W_n(R)=R^n$.

Via the natural embeddings
\[
\Lie(W_{n_i})\hookrightarrow\Der_k(A(m;\bm{n}))~;~\sum_{s=0}^{n_i-1}a_i\dfrac{d}{dt_{s}}\mapsto \sum_{s=0}^{n_i-1}a_i\partial_i^{p^s},
\]
we obtain the identification 
\[
\bigoplus_{i=1}^m\Lie(W_{n_i})=\sum_{i=1}^m\sum_{s=0}^{n_i-1}a_i\partial_i^{p^s}\subset\Der_k(A(m;\bm{n})). 
\]
This gives rise to an embedding of $k$-algebraic groups
\[
\prod_{i=1}^mW_{n_i}\hookrightarrow\GL_{A(m;\bm{n})},
\]
whose restriction to the first Frobenius kernel $\prod_{i=1}^mW_{n_i(1)}$ factors through the subgroup scheme $\Gamma(m;\bm{n})\subset\GL_{A(m;\bm{n})}$. Hence we have a natural embedding of $k$-group schemes
\begin{equation}\label{eq:def of G^-}
\prod_{i=1}^mW_{n_i(1)}\hookrightarrow G(m;\bm{n}).
\end{equation}
More precisely, each factor $W_{n_i}\hookrightarrow\GL_{A(m;\bm{n})}$ of the embedding can be described in terms of the Artin--Hasse exponential series $E_p(t)\in k[[t]]$ as follows,
\[
W_{n_i}\hookrightarrow\GL_{A(m;\bm{n})}~;~\bm{a}=(a_0,a_1,\dots,a_{n_i-1})\mapsto E_p(a_0\partial_i)E_p(a_1\partial_i^p)\cdots E_p(a_{n_i-1}\partial_i^{n_i-1}).
\]

\begin{definition}\label{def:G^-}
We denote by $G^-(m;\bm{n})$ (or by $G^-$ shortly) the image of the homomorphism (\ref{eq:def of G^-}). 
Let $R$ be a $k$-algebra. 
Moreover, for any 
\[
\bm{a}=(a_{is})_{1\le i\le m,0\le s\le n_i-1}\in \prod_{i=1}^mW_{n_i(1)}(R),
\]
we denote by $\varphi(\bm{a})$ the derivation-automorphism of $A(m;\bm{n})\otimes_k R$ such that $\varphi(\bm{a})_*$ coincides with the image of $\bm{a}$ in $G(m;\bm{n})$. 
\end{definition}

\begin{rem}\label{rem:AH p-torsion}
Note that
\[
E_p(t)\equiv 1+t+\dfrac{t^2}{2!}+\cdots+\dfrac{t^{p-1}}{(p-1)!}\mod t^pk[[t]].
\]
Hence, for any $\bm{a}=(a_0,a_1,\dots,a_{n_i-1})\in W_{n_i(1)}(R)$, we have
\[
E_p(a_s\partial_i^{p^s})=1+a_s\partial_i^{p^s}+\dfrac{a_s^2\partial_i^{2p^s}}{2!}+\cdots+\dfrac{a_s^{p-1}\partial_i^{(p-1)p^s}}{(p-1)!}.
\]
Moreover, as 
\begin{align*}
E_p(t)E_p(-t)&\equiv\left(1+t+\dfrac{t^2}{2!}+\cdots+\dfrac{t^{p-1}}{(p-1)!}\right)\left(1+(-t)+\dfrac{(-t)^2}{2!}+\cdots+\dfrac{(-t)^{p-1}}{(p-1)!}\right)\\
&=\sum_{k=0}^{p-1}\sum_{i=0}^k\dfrac{t^i}{i!}\dfrac{(-t)^{k-i}}{(k-i)!}=\sum_{k=0}^{p-1}\dfrac{\{t+(-t)\}^k}{k!}=1\mod t^pk[[t]],
\end{align*}
for any $\bm{a}=(a_{is})_{1\le i\le m,0\le s\le n_i-1}\in \prod_{i=1}^mW_{n_i(1)}(R)$, 
we have $\varphi(\bm{a})^{-1}=\varphi(-\bm{a})$. 
\end{rem}

\begin{rem}\label{rem:G^-}
Suppose that $m=1$ and write $\partial\Def\partial_1$. For any 
$a\in R$ with $a^p=0$, 
we define $(a+x)^{(r)}$ to be $(a+x)^{(r)}\Def\dfrac{(a+x)^r}{r!}$ for $1\le r<p$, and 
\[
(a+x)^{(r)}\Def\sum_{i=0}^{p-1}a^{(i)}x^{(r-i)}=\left(\sum_{i=0}^{p-1}\dfrac{a^{i}\partial^i}{i!}\right)(x^{(r)})=E_p(a\partial)(x^{(r)})
\]
for $r\ge p$. 
Moreover, for any $s,t\ge 0$, we have $E_p(a\partial^{p^t})(x^{(p^s)})=x^{(p^s)}$ for  $s<t$ and $E_p(a\partial^{p^s})(x^{(p^s)})=a+x^{(p^s)}$ for $s=t$. By noticing that 
\begin{align*}
\dfrac{(p^ij)!}{j!(p^i!)^{j}}&=\prod_{l=0}^{j-1}\dfrac{((l+1)p^i-1)!}{(lp^i)!(p^i-1)!}=\prod_{l=0}^{j-1}\dfrac{((l+1)p^i-1)((l+1)p^i-2)\cdots ((l+1)p^i-(p^i-1))}{(p^i-1)!}\\&\equiv \prod_{l=0}^{j-1}\dfrac{(-1)^{p^i-1}(p^i-1)!}{(p^i-1)!}=1\mod p\quad (i,j\ge 0),
\end{align*}
we have
\[
E_p(a\partial^{p^t})(x^{(p^s)})=\sum_{i=0}^{p-1}a^{(i)}x^{(p^s-ip^t)}=\sum_{i=0}^{p-1}a^{(i)}(x^{(p^t)})^{(p^{s-t}-i)}
\]
for $s\ge t$, which we denote by $(a+x^{(p^t)})^{(p^{s-t})}\Def E_p(a\partial^{p^t})(x^{(p^s)})$.  
These computations give a description of $E_p(a\partial_i^{p^t})(x_i^{(p^s)})$ even in the case where $m$ is arbitrary as follows.
\begin{align}\label{eq:E_p x_i}
E_p(a\partial_i^{p^t})(x_j^{(p^s)})=
\begin{cases}
x_j^{(p^s)},&i\neq j,\\
x_i^{(p^s)},&i=j,s<t,\\
(x_i^{(p^t)}+a)^{(p^{s-t})},&i=j,s\ge t
\end{cases}
\end{align} 
\end{rem}

Next we define the subgroup schemes $G^{\bullet}\subset G(m;\bm{n})$ for $\bullet\in\{0,+\}$. To this end, we prove the following lemma. 

\begin{lem}\label{lem:def of G^+,0} 
Let $R$ be a $k$-algebra. 
An automorphism of $R$-algebra $\varphi\in\uAut(A(m;\bm{n}))(R)$ with $\varphi(I\otimes_k R)\subset I\otimes_kR$ is a derivation-automorphism of $A(m;\bm{n})\otimes_k R$ if and only if $\varphi$ is an automorphism of the divided power ring $(A(m;\bm{n})\otimes_k R,I\otimes_kR)$. 
\end{lem}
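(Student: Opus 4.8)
The plan is to prove both implications by unwinding the two definitions in terms of their action on the divided-power generators $x^{(\alpha)}$, reducing everything to the single identity $\varphi(f^{(r)}) = f^{(r-1)}\varphi(f)$ versus $\varphi(f^{(r)}) = \varphi(f)^{(r)}$ for $f \in I\otimes_k R$.

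First I would set up notation: write $A = A(m;\bm{n})$, $A_R = A\otimes_k R$, $I_R = I\otimes_k R$, and recall that $I_R^p = 0$, that $I_R$ carries the $R$-linear extension of the divided power structure on $I$, and that by hypothesis $\varphi(I_R)\subseteq I_R$ (in fact then $\varphi$ restricts to an $R$-algebra automorphism of $A_R$ preserving $I_R$, hence an automorphism of the pair as an ideal). The key observation is the characterization of $W(m;\bm{n})\otimes_k R$ inside $\Der_R(A_R)$ (really $\Der_k(A(m;\bm{n}))\otimes_k R$): a special derivation $D$ is one satisfying $D(f^{(r)}) = f^{(r-1)}D(f)$ for all $f\in I$, $r\ge 1$, and $W(m;\bm{n})\otimes_k R$ is precisely the set of $R$-linear special derivations of $A_R$. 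So the statement is: $\varphi$ conjugates every special derivation to a special derivation if and only if $\varphi$ commutes with the divided power operations.

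For the ``if'' direction: suppose $\varphi$ is an automorphism of the divided power ring, i.e. $\varphi(f^{(r)}) = \varphi(f)^{(r)}$ for $f\in I_R$. Let $D\in W(m;\bm{n})\otimes_k R$ be special; I must show $\varphi_*(D) = \varphi\circ D\circ\varphi^{-1}$ is special, i.e. $\varphi_*(D)(g^{(r)}) = g^{(r-1)}\varphi_*(D)(g)$ for $g\in I_R$. Writing $g = \varphi(f)$ with $f = \varphi^{-1}(g)\in I_R$, compute $\varphi_*(D)(g^{(r)}) = \varphi(D(\varphi^{-1}(\varphi(f)^{(r)}))) = \varphi(D(f^{(r)})) = \varphi(f^{(r-1)}D(f)) = \varphi(f)^{(r-1)}\varphi(D(f)) = g^{(r-1)}\varphi_*(D)(g)$, where the second equality uses $\varphi^{-1}$ is also a divided-power automorphism, the third uses $D$ special, and the fourth uses $\varphi$ multiplicative and a divided-power morphism. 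That closes this direction.

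For the ``only if'' direction — which I expect to be the main obstacle — suppose $\varphi$ is a derivation-automorphism. I want $\varphi(f^{(r)}) = \varphi(f)^{(r)}$ for all $f\in I_R$ and $1\le r < p$ (beyond that both sides vanish). The natural idea is to apply special derivations to detect divided powers: for $1\le r<p$, and any special $D$, $D(f^{(r)}) = f^{(r-1)}D(f)$, while $D(\varphi(f)^{(r)}) = \varphi(f)^{(r-1)}D(\varphi(f))$. One would like to argue that the elements $\partial_i$ (and more generally enough special derivations) separate points well enough that the relation $\varphi_*(D)\in W(m;\bm{n})\otimes_k R$ for all special $D$ forces $\varphi\circ D = (\varphi_* D)\circ\varphi$ to respect divided powers; concretely, set $g = \varphi(f)$, $h = \varphi(f^{(r)})$, apply an arbitrary special derivation $D$ and use that $\varphi_*(D)$ is again special to get $\varphi_*(D)(h) = \varphi(D(f^{(r)})) = \varphi(f^{(r-1)}D(f)) = g^{(r-1)}\cdot(\varphi_* D)(g)$ (using $\varphi$ multiplicative). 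Comparing with $\varphi_*(D)(g^{(r)}) = g^{(r-1)}(\varphi_* D)(g)$ shows $D'(h - g^{(r)}) = 0$ for every special derivation $D' = \varphi_*(D)$, i.e. for every $D'\in W(m;\bm{n})\otimes_k R$. The remaining point is that the only elements of $A_R$ annihilated by all of $W(m;\bm{n})\otimes_k R = \sum A_R\partial_i$ are the constants $R$, together with a degree/induction bookkeeping (inducting on $r$, noting $h - g^{(r)}\in I_R$ has zero ``constant term'') to conclude $h = g^{(r)}$. I would flag the verification that $\bigcap_{i}\Ker(\partial_i) \cap A_R = R$ and the induction base case as the technical heart, referring to the explicit basis $\{x^{(\alpha)}\}$ and the formula $\partial_i(x^{(\alpha)}) = x^{(\alpha - \delta_i)}$; the rest is formal.
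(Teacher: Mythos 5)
Your proposal is correct and follows essentially the same route as the paper: the ``if'' direction is the identical conjugation computation, and the ``only if'' direction is the same induction on $r$, showing the difference $\varphi(f^{(r)})-\varphi(f)^{(r)}$ is killed by every element of $W(m;\bm{n})\otimes_k R$ and lies in $I\otimes_k R$, hence vanishes. The only differences are cosmetic: the paper conjugates $D$ by $\varphi^{-1}$ where you conjugate by $\varphi$ and invoke surjectivity of $\varphi_*$ on $W(m;\bm{n})\otimes_k R$ (equivalent facts), and the common-kernel statement $\bigcap_i\Ker(\partial_i)=R\cdot 1$ that you flag is exactly what the paper also uses, verified immediately from $\partial_i(x^{(\alpha)})=x^{(\alpha-\epsilon_i)}$.
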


\begin{proof}
Suppose that $\varphi$ is an automorphism of the divided power ring $(A(m;\bm{n})\otimes_k R,I\otimes_kR)$. Let $D\in W(m;\bm{n})\otimes_kR$ be an arbitrary element. We will show that $\varphi\circ D\circ\varphi^{-1}$ again a special derivation of $A(m;\bm{n})\otimes_k R$. For any $g\in I\otimes_k R$ and any $r\ge 1$, we have
\begin{align*}
(\varphi\circ D\circ\varphi^{-1})(g^{(r)})
&=\varphi(D(\varphi^{-1}(g^{(r)})))=\varphi(D(\varphi^{-1}(g)^{(r)}))\\
&=\varphi(\varphi^{-1}(g)^{(r-1)}D(\varphi^{-1}(g)))=g^{(r-1)}(\varphi\circ D\circ \varphi^{-1})(g).
\end{align*}
This proves that $\varphi\circ D\circ\varphi^{-1}\in W(m;\bm{n})\otimes_kR$. 

Conversely, suppose that $\varphi$ defines a derivation-automorphism of $A(m;\bm{n})\otimes_kR$. For any $f\in I\otimes_kR$ and any $r\ge 1$, we will show that $\varphi(f^{(r)})=\varphi(f)^{(r)}$ by induction on $r$. This is clear for $r=1$. Suppose that $r>1$. By the induction hypothesis, we have $\varphi(f^{(r-1)})=\varphi(f)^{(r-1)}$.  If $D\in W(m;\bm{n})\otimes_kR$, we have
\begin{align*}
D(\varphi(f^{(r)}))&=\varphi(\varphi^{-1}(D(\varphi(f^{(r)}))))
=\varphi((\varphi^{-1}\circ D\circ\varphi)(f^{(r)}))\\
&=\varphi(f^{(r-1)}(\varphi^{-1}\circ D\circ\varphi)(f))
=\varphi(f^{(r-1)})D(\varphi(f))\\
&=\varphi(f)^{(r-1)}D(\varphi(f))=D(\varphi(f)^{(r)}).
\end{align*}
This implies that $\varphi(f^{(r)})-\varphi(f)^{(r)}\in1\otimes_k R$. However, as $\varphi(f^{(r)})-\varphi(f)^{(r)}\in I\otimes_k R$, we have $\varphi(f^{(r)})-\varphi(f)^{(r)}=0$. This completes the proof.\end{proof}

As a consequence, any derivation-automorphism $\varphi$ with $\varphi(I\otimes_kR)\subset I\otimes_kR$ is completely determined by the images $\varphi(x_1),\dots,\varphi(x_m)$ of the variables $x_1,\dots,x_m$. 

\begin{definition}\label{def:G^0}
We define the subgroup scheme $G^0(m;\bm{n})$ (or $G^0$ shortly) of $G(m;\bm{n})$ by attaching any $k$-algebra $R$ to the subgroup $G^0(m;\bm{n})(R)\subset G(m;\bm{n})(R)$ consisting of $R$-automorphisms $g$ of the form $g=\varphi_*$, where $\varphi$ is an $R$-automorphism of the divided power algebra $(A(m;\bm{n})\otimes_k R,I\otimes_kR)$ such that 
\[
\varphi(x_{j})=\sum_{i=1}^ma_{ij}x_i\quad (1\le j\le m)
\]
for some $(a_{ij})\in\GL_m(R)$. By definition, there exists a natural isomorphism of $k$-group schemes $G^0(m;\bm{n})\simeq\GL_m$. 
Moreover, for any $k$-algebra $R$ and any $R$-automorphism $\varphi$ of the divided power algebra $(A(m;\bm{n})\otimes_k R,I\otimes_kR)$, we define the $R$-automorphism $\varphi^0$ to be 
\begin{equation}\label{eq:def:G^0}
\varphi^0(x_{j})=\sum_{i=1}^m\partial_i(\varphi(x_j))\,x_i\quad (1\le j\le m).
\end{equation}
\end{definition}

\begin{definition}\label{def:G^+}
We define the subgroup scheme $G^+(m;\bm{n})$ (or $G^+$ shortly) of $G(m;\bm{n})$ by attaching any $k$-algebra $R$ to the subgroup $G^+(m;\bm{n})(R)\subset G(m;\bm{n})(R)$ consisting of $R$-automorphisms $g$ of the form $g=\varphi_*$, where $\varphi$ is an $R$-automorphism of the divided power algebra $(A(m;\bm{n})\otimes_k R,I\otimes_kR)$ such that $\varphi^0={\rm id}$ (cf.\ (\ref{eq:def:G^0})).
\end{definition}

\begin{prop}\label{prop:G^- uni}
The subgroup scheme $G^+=G^{+}(m;\bm{n})$ is a smooth  unipotent $k$-group scheme. 
\end{prop}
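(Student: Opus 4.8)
The plan is to realize $G^+$ as an iterated extension of vector groups, exploiting the grading of the divided power algebra.

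Write $A(m;\bm n)=\bigoplus_{d\ge 0}A_d$ for the grading in which $x^{(\alpha)}$ has degree $|\alpha|=\alpha_1+\cdots+\alpha_m$, set $A_{\ge d}=\bigoplus_{e\ge d}A_e$, so that $A_{\ge 1}=I$ and $A_{\ge d}=0$ for $d>N\Def\sum_{i=1}^m(p^{n_i}-1)$. By Definition \ref{def:G^+} together with Lemma \ref{lem:def of G^+,0}, an element of $G^+(R)$ is the push-forward $\varphi_*$ of a uniquely determined $R$-automorphism $\varphi$ of the divided power algebra $(A(m;\bm n)\otimes_kR,\,I\otimes_kR)$ with $\varphi^0=\mathrm{id}$; the latter condition means precisely that $\varphi(x_j)\equiv x_j\bmod A_{\ge 2}\otimes_kR$ for all $j$. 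Since $x^{(\alpha)}$ is a product of divided powers of the $x_j$ and $\varphi$ respects divided powers, one gets $\varphi(A_{\ge d}\otimes_kR)\subseteq A_{\ge d}\otimes_kR$ and, more precisely, $\varphi(x^{(\alpha)})\equiv x^{(\alpha)}\bmod A_{\ge|\alpha|+1}\otimes_kR$, so that $\varphi$ acts as the identity on every graded quotient $A_{\ge d}/A_{\ge d+1}$. Fixing a complete flag of $A(m;\bm n)$ refining the filtration by the $A_{\ge d}$, the faithful representation $G^+\hookrightarrow\GL_{A(m;\bm n)}$ therefore has image inside the smooth connected unipotent subgroup $U$ of $\GL_{A(m;\bm n)}$ consisting of the elements acting trivially on every successive quotient of the flag. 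As a closed subgroup scheme of a unipotent group scheme is unipotent, $G^+$ is unipotent.

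For smoothness I would filter $G^+$ by the closed subgroup schemes $\Phi_d$ ($d\ge 1$), $\Phi_d(R)=\{\varphi\in G^+(R)\mid\varphi(x_j)\equiv x_j\bmod A_{\ge d+1}\otimes_kR\text{ for all }j\}$, so that $\Phi_1=G^+$ and $\Phi_d=\{1\}$ for $d\ge N$. Using that every $\varphi\in G^+(R)$ preserves the filtration and acts trivially on its graded pieces, one checks that each $\Phi_d$ is normal in $G^+$ and that the leading-term map
\[
\theta_d\colon\Phi_d\lto\bigoplus_{j=1}^mA_{d+1},\qquad\varphi\longmapsto\bigl(\varphi(x_j)-x_j\bmod A_{\ge d+2}\bigr)_{1\le j\le m},
\]
is a homomorphism of $k$-group schemes into a vector group with kernel exactly $\Phi_{d+1}$. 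Hence $\Phi_d/\Phi_{d+1}\simeq V_d\Def\theta_d(\Phi_d)$, a subgroup scheme of a vector group (automatically unipotent). Since an extension of a smooth group scheme by a smooth group scheme is smooth and $\Phi_N$ is trivial, it now suffices to prove that each $V_d$ is smooth, i.e.\ (the base field being algebraically closed, hence perfect) reduced.

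This last point is the crux, and I expect it to be the main work. I would establish it by exhibiting enough one-parameter subgroups of $\Phi_d$. Recall (this is the device used to construct $G^-$ in Definition \ref{def:G^-}) that for a nilpotent special derivation $D$ of $A(m;\bm n)$ the Artin--Hasse exponential $E_p(tD)$ has coefficients in $\Z_{(p)}$, hence defines an $R$-algebra automorphism of $A(m;\bm n)\otimes_kR$ depending polynomially on $t$, and $t\mapsto E_p(tD)$ is a homomorphism $\G_{a,k}\to\uAut_k(A(m;\bm n))$. If $D=\sum_jf_j\partial_j$ is homogeneous of degree $d\ge 1$, with $f_j\in A_{d+1}$, then $D$ is a special derivation fixing $I$, so by Lemma \ref{lem:def of G^+,0} each $E_p(tD)$ is a derivation-automorphism; moreover $E_p(tD)(x_j)=x_j+tf_j+(\text{terms in }A_{\ge d+2}\otimes_kR)$, whence $E_p(tD)^0=\mathrm{id}$, so $t\mapsto E_p(tD)$ factors through $\Phi_d\subseteq G^+$ and $\theta_d(E_p(tD))=(tf_j)_j$. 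Running $D$ over a homogeneous basis of $\mathrm{Lie}(G^+)$ (which, being the Lie algebra of a group scheme stable under the grading torus, is graded) and assembling the corresponding one-parameter subgroups into a product map $\G_{a,k}^{\,r}\to G^+$, one is left with checking that this map is an isomorphism of $k$-schemes; its inverse is built by reading off leading terms degree by degree along the filtration $\{\Phi_d\}$. Granting this, $G^+\simeq\A^r_k$ is smooth (indeed rational). The two points requiring genuine care are: $(i)$ that $E_p(tD)$ lies in $G^+$ for $D$ of positive degree --- which, as indicated, reduces via Lemma \ref{lem:def of G^+,0} to $D$ being a special derivation fixing $I$, exactly as for $G^-$; and $(ii)$ that the leading terms $\theta_d(E_p(tD))$ exhaust each $V_d$, equivalently that $\Dim G^+=\Dim_k\mathrm{Lie}(G^+)$. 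Point $(ii)$ is the real obstacle: it requires pinning down which divided power automorphisms of $A(m;\bm n)$ actually exist --- that is, analyzing the integrability constraints that the divided powers of $A(m;\bm n)$ impose on the images $\varphi(x_1),\dots,\varphi(x_m)$.
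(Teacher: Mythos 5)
Your unipotence argument is sound (embedding $G^+$ into the subgroup of $\GL_{A(m;\bm{n})}$ acting trivially on the graded pieces of the degree filtration is a perfectly good, even scheme-theoretically cleaner, variant of the paper's check that $\varphi-\mathrm{id}$ is nilpotent). But for smoothness you stop exactly where the content of the proposition lies. You reduce smoothness to showing that each leading-term image $V_d$ is smooth, equivalently that the Artin--Hasse one-parameter subgroups exhaust $V_d$, i.e.\ that $\Dim G^+=\Dim_k\Lie(G^+)$, and you yourself flag this as ``the real obstacle,'' to be settled by an unspecified analysis of ``integrability constraints.'' That analysis is never carried out, so the proposal does not prove smoothness. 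Moreover, the missing idea is not a delicate integrability computation: the point (and the paper's entire proof) is that there are \emph{no} constraints. By Lemma \ref{lem:def of G^+,0}, an element of $G^+(R)$ is the same as a divided-power automorphism $\varphi$ of $(A(m;\bm{n})\otimes_kR,I\otimes_kR)$ with $\varphi(x_i)=x_i+\sum_{|\alpha|\ge 2}f_{i,\alpha}x^{(\alpha)}$, and $\varphi$ is uniquely determined by the $\varphi(x_i)$, since $\varphi(x_i^{(p^s)})=\varphi(x_i)^{(p^s)}$ and the $x_i^{(p^s)}$ generate $A(m;\bm{n})$ as an algebra. Conversely, \emph{every} choice of coefficients $f_{i,\alpha}\in R$ occurs: putting $g_i\Def x_i+\sum_{|\alpha|\ge2}f_{i,\alpha}x^{(\alpha)}$, the assignment $x_i^{(p^s)}\mapsto g_i^{(p^s)}$ is well defined because $A(m;\bm{n})\simeq B(m;\bm{n})$ is a truncated polynomial ring and $h^p=p!\,h^{(p)}=0$ for every $h\in I\otimes_kR$; it respects the (unique) divided power structure, and it is invertible because it is unipotent with respect to the very degree filtration you introduced. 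Hence the functor $G^+$ is represented by an affine space $\A^N_k$, and smoothness is immediate; your filtration by the $\Phi_d$ and the dimension count are unnecessary.

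One further caution about the device you propose for producing one-parameter subgroups: for a general homogeneous special derivation $D$ of positive degree, the integrality of the Artin--Hasse coefficients does not by itself make $E_p(tD)$ an algebra (let alone divided-power) automorphism of $A(m;\bm{n})\otimes_kR$; in the paper this is only used for the commuting nilpotent derivations $\partial_i^{p^s}$ entering $G^-$, where it can be checked directly. So even the input to your step $(i)$ would need a separate justification, while step $(ii)$, as noted above, is precisely the assertion the paper proves by exhibiting $G^+$ as an affine space.
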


\begin{proof}
Let $R$ be a $k$-algebra. If $\varphi_*\in G^+(R)$, then we have 
\[
\varphi(x_i)=x_i+\sum_{|\alpha|\ge 2} f_{i,\alpha}x^{(\alpha)}
\] 
for all $1\le i\le m$, where the $f_{i,\alpha}\in R$ are arbitrary. This implies that $G^+$ is isomorphic to an affine space $\A_k^{N}$ for some $N>0$. In particular, $G^+$ is a smooth over $k$. Let us show that $G^+$ is a unipotent $k$-group scheme. It suffices to show that any element $\varphi_*\in G^+(k)$ is unipotent, i.e.\,$\varphi-{\rm id}$ is nilpotent in ${\rm End}_k(A(m;\bm{n}))$ (cf.\cite[Chapter 8]{wat79}). As
$(\varphi-{\rm id})(x_i)=\sum_{|\alpha|\ge 2} f_{i,\alpha}x^{(\alpha)}$ 
with $f_{i,\alpha}\in k$ for any $1\le i\le m$, we have $(\varphi-{\rm id})^r(x_i)=\sum_{|\alpha|\ge 1+r}g_{i,r,\alpha}x^{(\alpha)}$ with $g_{i,r,\alpha}\in k$ for any $1\le i\le m$, which implies that $(\varphi-{\rm id})^l=0$ for sufficiently large $l\gg 1$. This completes the proof. 
\end{proof}

\subsection{Triangulation of the automorphism group schemes}\label{sec:triangulation}

We will use the same notation as in the previous subsection. Now we will show that the automorphism group schemes $G(m;\bm{n})$ admit \textit{triangulations} in the sense of Severitt\cite[Definition 3.4]{severitt}.

\begin{thm}\label{thm:triangulation}
For any $m\in\Z_{>0}$ and any $\bm{n}\in\Z_{>0}^m$, the multiplication map
\[
G^+\times G^0\times G^-\longrightarrow~G(m;\bm{n})
\]
is an isomorphism of $k$-schemes. 
\end{thm}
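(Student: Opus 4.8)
The plan is to prove that the multiplication map $\mu\colon G^+\times G^0\times G^-\to G(m;\bm{n})$ is an isomorphism of $k$-schemes by constructing an explicit inverse on $R$-points functorially in the $k$-algebra $R$. By Theorem \ref{thm:Aut W}, every $R$-point of $G(m;\bm{n})$ is of the form $\varphi_*$ for a unique derivation-automorphism $\varphi$ of $A(m;\bm{n})\otimes_k R$, so it suffices to work with derivation-automorphisms $\varphi$ and to factor each such $\varphi$ uniquely as $\varphi^+\circ\varphi^0\circ\varphi^-$ with $\varphi^\pm,\varphi^0$ of the prescribed types. The strategy is: (1) use $\varphi$ to read off an element $\bm{a}\in\prod_i W_{n_i(1)}(R)$ so that $\varphi^-\Def\varphi(\bm{a})$ (in the notation of Definition \ref{def:G^-}) has the property that $\psi\Def\varphi\circ(\varphi^-)^{-1}=\varphi\circ\varphi(-\bm{a})$ stabilizes $I\otimes_k R$; (2) apply Lemma \ref{lem:def of G^+,0} to see that such a $\psi$ is automatically an automorphism of the divided power ring, hence determined by the $\psi(x_j)$; (3) split off the linear part: set $\varphi^0\Def\psi^0$ (the $\GL_m$-part defined by $\psi^0(x_j)=\sum_i\partial_i(\psi(x_j))x_i$, which lies in $G^0$ by Definition \ref{def:G^0}), and put $\varphi^+\Def\psi\circ(\varphi^0)^{-1}$; then check $\varphi^+\in G^+(R)$ by verifying $(\varphi^+)^0=\mathrm{id}$.

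The key computation supporting step (1) is the following. For a derivation-automorphism $\varphi$, one needs to determine the "constant terms" $\varepsilon(\varphi(x_i^{(p^s)}))$ for $0\le s\le n_i-1$ and show these constants can be matched by a unique element of $G^-$; the explicit formula \eqref{eq:E_p x_i} in Remark \ref{rem:G^-} shows precisely how $\varphi(\bm a)$ acts on the generators $x_i^{(p^s)}$, namely it adds the constants $a_{is}$ in a triangular (in $s$) fashion, so that given the target constants one solves for $\bm a$ recursively in $s$ starting from $s=0$. One must check that an element of $G(m;\bm n)(R)$ that stabilizes no ideal but whose composite with $\varphi(-\bm a)$ does stabilize $I\otimes_k R$ exists and that $\bm a$ is uniquely pinned down; this uses that $W(m;\bm n)$-derivation-automorphisms permute the special derivations $\partial_i$ and hence must send the "order filtration" in a controlled way. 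Surjectivity of $\mu$ then follows because every $\varphi$ decomposes; injectivity follows because at each stage ($\bm a$, then the linear part, then the remainder) the choice was forced.

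After this the remaining steps are bookkeeping: one verifies that the assignments $\varphi\mapsto\bm a$, $\varphi\mapsto\varphi^0$, $\varphi\mapsto\varphi^+$ are natural in $R$ (all operations are polynomial in the structure constants of $\varphi$ and in entries of inverses in $\GL_m(R)$, hence morphisms of schemes), so the set-theoretic bijection on $R$-points is in fact an isomorphism of functors, equivalently of $k$-schemes by Yoneda. One should also note that $G^\pm, G^0$ really are closed subgroup schemes (already established: $G^0\simeq\GL_m$ by Definition \ref{def:G^0}, $G^+$ smooth unipotent by Proposition \ref{prop:G^- uni}, $G^-\simeq\prod_iW_{n_i(1)}$ by construction), so the factorization statement is well-posed.

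The main obstacle I expect is step (1): showing that \emph{every} derivation-automorphism $\varphi$ can be pre-composed with a unique $\varphi(-\bm a)\in G^-(R)$ so that the result stabilizes $I\otimes_k R$, and that $\bm a$ is uniquely determined. Concretely, one must understand how a general $W(m;\bm n)$-automorphism moves the augmentation ideal $I$: a priori $\varphi(I)$ need not equal $I$ (that is exactly why $G^-$ is nontrivial), and one needs to extract from $\varphi$ the "translation data" $\bm a$ canonically. The cleanest route is presumably to use that $\varphi$ conjugates the abelian subalgebra $\sum_{i,s}k\partial_i^{p^s}$ of $\Der_k(W(m;\bm n))$ into itself (or into a conjugate pinned down by $\varphi$), pick out the induced action on the "$\partial_i$ modulo higher $\partial_i^{p^s}$" quotients, and invert it — but making the triangular recursion in $s$ rigorous, keeping track of the divided-power structure via the identities in Remark \ref{rem:G^-}, is where the real work lies. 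If the direct approach stalls, an alternative is a dimension/Lie-algebra count: show $\mathrm{Lie}(G^+)\oplus\mathrm{Lie}(G^0)\oplus\mathrm{Lie}(G^-)=\Der_k(W(m;\bm n))=\mathrm{Lie}(G(m;\bm n))$ and that $\mu$ is a monomorphism, then conclude $\mu$ is an open immersion that is also surjective on geometric points, hence an isomorphism — though establishing the monomorphism property still reduces to essentially the same uniqueness argument.
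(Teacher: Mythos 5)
Your outline follows the same route as the paper: set $a_{is}=\varepsilon(\varphi(x_i^{(p^s)}))$, put $\varphi^-=\varphi(\bm{a})$, show $\varphi^{\ge 0}\Def\varphi\circ\varphi(-\bm{a})$ stabilizes $I\otimes_kR$, invoke Lemma \ref{lem:def of G^+,0}, split off the linear part via (\ref{eq:def:G^0}), and get uniqueness by applying $\varepsilon$ and comparing linear terms. But the step you yourself label ``the main obstacle'' --- that for this specific $\bm{a}$ the composite $\varphi\circ\varphi(-\bm{a})$ really does stabilize $I\otimes_kR$ --- is precisely the substantive content of the proof, and your proposal does not supply it. In the paper this is done by first factoring $\varphi$, viewed as an automorphism of the truncated polynomial algebra, as $\psi^{\ge 0}\circ\psi^-$ with $\psi^-$ the naive translation $x_i^{(p^s)}\mapsto a_{is}+x_i^{(p^s)}$ (after Severitt), which reduces the claim to showing $\psi^-\circ\varphi(-\bm{a})$ stabilizes $I$; the subtlety is that the Artin--Hasse translation $\varphi(-\bm{a})\in G^-$ is \emph{not} the inverse of $\psi^-$, but differs from it by cross-terms $\prod_t(-a_{it})^{(j_t)}x_i^{(p^s-j_{s-1}p^{s-1}-\cdots-j_0)}$ coming from (\ref{eq:E_p x_i}), and one has to check, using the divisibility $(a_{iu}+x_i^{(p^u)})^{(p-j_u)}\mid\psi^-(x_i^{(p^s-j_{s-1}p^{s-1}-\cdots-j_up^u)})$ together with $a_{iu}^p=0$, that all these correction terms are carried into $I$. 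Nothing in your sketch addresses this computation, so the surjectivity half of the theorem is not proved.

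Moreover, the two routes you suggest for closing the gap do not work as stated. A general derivation-automorphism does \emph{not} conjugate the abelian subalgebra $\sum_{i,s}k\partial_i^{p^s}$ of $\Der_k(W(m;\bm{n}))$ into itself: already an element of $G^+$ with $\varphi(x_1)=x_1+x_1^{(2)}$ sends $\partial_1$ to $\partial_1$ plus higher-order terms outside that span, so the ``triangular recursion in $s$'' has no starting point; in fact no recursion is needed, since by (\ref{eq:E_p x_i}) the augmentation of $\varphi(\bm{a})(x_i^{(p^s)})$ is exactly $a_{is}$. The fallback via a Lie-algebra count plus ``monomorphism, hence open immersion, hence isomorphism'' is also not a valid shortcut here: $\mu$ is only a morphism of schemes (not of groups), $G(m;\bm{n})$ is non-smooth with infinitesimal factor $G^-$, and surjectivity on geometric points is blind to exactly the infinitesimal directions at stake (indeed $G^+\rtimes G^0\hookrightarrow G(m;\bm{n})$ is already bijective on $k$-points), so one would still need flatness or an explicit inverse --- i.e.\ essentially the same computation you have postponed.
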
 

\begin{proof}
Let $R$ be a $k$-algebra. We have to show that the multiplication map
\[
G^+(R)\times G^0(R)\times G^-(R)\longrightarrow~G(m;\bm{n})(R)
\]
is bijective. First we will show that this map is surjective. Let $\varphi$ be a derivation-automorphism of $A(m;\bm{n})\otimes_k R$. We define $\bm{a}=(a_{is})\in\prod_{i=1}^mW_{n_i(1)}(R)$ to be
\[ 
a_{is}\Def\varepsilon(\varphi(x_i^{(p^s)}))\quad (1\le i\le m,~0\le s\le n_i-1)
\]
and define $\varphi^-\Def\varphi(\bm{a})$ (cf.\,Definition \ref{def:G^-}). For the definition of the map $\varepsilon$, see (\ref{eq:aug}). We claim that the composition of the derivation-automorphisms $\varphi^{\ge 0}\Def\varphi\circ(\varphi^-)^{-1}=\varphi\circ\varphi(-\bm{a})$ (cf.\,Remark \ref{rem:AH p-torsion}) satisfies
\[
\varphi^{\ge 0}(I\otimes_kR)\subset I\otimes_k R.
\]
As explained in \cite{severitt}, $\varphi$ can be written as the composition $\varphi=\psi^{\ge 0}\circ\psi^{-}$ of automorphisms $\psi^{\ge 0}$, $\psi^{-}$ of the truncated polynomial ring $A(m;\bm{n})\otimes_kR$ so that $\psi^{\ge 0}(I\otimes_k R)\subset I\otimes_k R$ and $\psi^{-}$ is defined by the parallel transformation 
\[
\psi^{-}(x_i^{(p^s)})=a_{is}+x_i^{(p^s)}\quad (1\le i\le m,~0\le s\le n_i-1).
\] 
This reduces the problem to proving that $\psi^-\circ\varphi(-\bm{a})(I\otimes_k R)\subset I\otimes_k R$. By (\ref{eq:E_p x_i}), we have
\begin{align*}
\varphi(-\bm{a})(x_i^{(p^s)})
&=-a_{is}+\sum_{j_0,\dots,j_{s-1}=0}^{p-1}\left\{\prod_{t=0}^{s-1}(-a_{it})^{(j_t)}\right\}x_i^{(p^s-j_{s-1}p^{s-1}-\dots-j_1p-j_0)}\\
&=-a_{is}+x_i^{(p^s)}+\sum_{j_0=1}^{p-1}\sum_{j_1,\dots,j_{s-1}=0}^{p-1}\left\{\prod_{t=0}^{s-1}(-a_{it})^{(j_t)}\right\}x_i^{(p^s-j_{s-1}p^{s-1}-\dots-j_1p-j_0)}\\
&\quad+\sum_{j_1=1}^{p-1}\sum_{j_2,\dots,j_{s-1}=0}^{p-1}\left\{\prod_{t=1}^{s-1}(-a_{it})^{(j_t)}\right\}x_i^{(p^s-j_{s-1}p^{s-1}-\dots-j_1p)}\\
&\quad+\cdots\\
&\quad+\sum_{j_{s-2}=1}^{p-1}\sum_{j_{s-1}=0}^{p-1}\left\{\prod_{t=s-2}^{s-1}(-a_{it})^{(j_t)}\right\}x_i^{(p^s-j_{s-1}p^{s-1}-j_{s-2}p^{s-2})}
+\sum_{j_{s-1}=1}^{p-1}(-a_{is})^{(j_s)}x_i^{(p^s-j_{s-1}p^{s-1})}.
\end{align*}
As $\dfrac{p^s-j_{s-1}p^{s-1}-\cdots-j_{u}p^{u}}{p^u}\equiv p-j_t\mod p$ for $0\le u\le s-1$, we have 
\[
(x_i^{(p^u)})^{(p-j_u)}\mid x_i^{(p^s-j_{s-1}p^{s-1}-\cdots-j_{u}p^{u})}\quad (1\le j_u\le p-1),
\]  
hence 
\[
(a_{iu}+x_i^{(p^u)})^{(p-j_u)}\mid \psi^-(x_i^{(p^s-j_{s-1}p^{s-1}-\cdots-j_{u}p^{u})})\quad (1\le j_u\le p-1).
\]  
By the condition that $a_{i0}^p=a_{i1}^p=\cdots=a_{i,s-1}^p=0$, this proves that
\[
\psi^{-}((-a_{iu})^{(j_u)}x_i^{(p^s-j_{s-1}p^{s-1}-\cdots-j_{u}p^{u})})\in (-a_{iu})^{(j_u)}(a_{iu}+x_i^{(p^u)})^{(p-j_u)}A(m;\bm{n})\otimes_kR\subset I\otimes_kR
\] 
for any $1\le j_u\le p-1$. Therefore, we can conclude that $\psi^{-}(\varphi(-\bm{a})(I\otimes_k R))\subset I\otimes_k R$. 

We have seen that any derivation-automorphism $\varphi$ of $A(m;\bm{n})\otimes_kR$ can be decomposed into the composition $\varphi=\varphi^{\ge 0}\circ\varphi^-$ of derivation-automorphisms with $\varphi^{\ge 0}(I\otimes_k R)\subset I\otimes_kR$ and $\varphi^-\in G^-(R)$. If we define $\varphi^0\in G^0(R)$ by (\ref{eq:def:G^0}), then we have $\varphi^+\Def\varphi^{\ge 0}\circ(\varphi^0)^{-1}\in G^+(R)$. This proves that $\varphi_*$ is in the image of the multiplication map $G^+(R)\times G^0(R)\times G^-(R)\to G(m;\bm{n})(R)$.   

It remains for us to show that the map $G^+(R)\times G^0(R)\times G^-(R)\to G(m;\bm{n})(R)$ is injective. Suppose that we have two decompositions of a derivation-automorphism $\varphi$, namely $\varphi=\varphi^+\circ\varphi^0\circ\varphi^-=\psi^+\circ\psi^0\circ\psi^-$ with $\varphi^{\bullet},\psi^{\bullet}\in G^{\bullet}(R)$ for $\bullet\in\{+,0,-\}$. Let us take $\bm{a}=(a_{is}),\bm{b}=(b_{is})\in\prod_{i=1}^{m}W_{n_i(1)}(R)$ so that $\varphi^-=\varphi(\bm{a})$, $\psi^-=\varphi(\bm{b})$. Then we have 
\[
a_{is}=\varepsilon(\varphi^{-}(x_i^{(p^s)}))=\varepsilon(\varphi(x_i^{(p^s)}))=\varepsilon(\psi^-(x_i^{(p^s)}))=b_{is}\quad (1\le i\le m,0\le s\le n_i-1).
\]
This proves that $\varphi^-=\psi^-$. It remains to show that $\varphi^{\bullet}=\psi^{\bullet}$ for $\bullet\in\{+,0\}$. However, as 
\[
(\varphi\circ(\varphi^{-})^{-1})(x_i)\equiv\varphi^0(x_i)\equiv\psi^0(x_i)\mod \sum_{|\alpha|\ge 2}Rx^{(\alpha)}\quad (1\le i\le m),
\]
we have $\varphi^0=\psi^0$ and $\varphi^+=\varphi\circ(\varphi^-)^{-1}\circ(\varphi^0)^{-1}=\varphi\circ (\psi^-)^{-1}\circ (\psi^0)^{-1}=\psi^+$. This completes the proof. 
\end{proof}

\begin{rem}
In particular, for any reduced $k$-algebra $R$, we have 
\[
\Aut_R(W(m;\bm{n})\otimes_kR)=G(m;\bm{n})(R)=G^+(R)\rtimes G^0(R).
\] 
This should be compared with \cite[Theorem 12.13]{Ree} and \cite[Theorem 2]{Wilson}. 
\end{rem}

\begin{cor}\label{cor:triangulation}
Suppose that $p>3$. The classifying space $BG(m;\bm{n})$ is stably birationally equivalent to the classifying space $B\Gamma(m;\bm{n})$.   
\end{cor}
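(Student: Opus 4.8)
The plan is to exploit the triangulation from Theorem \ref{thm:triangulation} together with the exact sequence relating $G(m;\bm{n})$ and its Frobenius kernel $\Gamma(m;\bm{n})$, and then apply Proposition \ref{prop:special G''}. First I would record the short exact sequence of affine $k$-group schemes
\[
1\lto\Gamma(m;\bm{n})\lto G(m;\bm{n})\xrightarrow{~F~}G(m;\bm{n})^{(p)}\lto 1,
\]
which exhibits $B\Gamma(m;\bm{n})$ as a partial quotient of a standard $G(m;\bm{n})$-torsor. To invoke Proposition \ref{prop:special G''} with $G=G(m;\bm{n})$, $G'=\Gamma(m;\bm{n})$ and $G''=G(m;\bm{n})^{(p)}$, I need two things about $G(m;\bm{n})^{(p)}$: that it is special, and that it is rational as a variety over $k$.

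For \emph{rationality}, this is exactly where Theorem \ref{thm:triangulation} enters: the multiplication map gives an isomorphism of $k$-schemes $G^+\times G^0\times G^-\xrightarrow{\simeq}G(m;\bm{n})$, where $G^-\simeq\prod_{i=1}^mW_{n_i(1)}$ is a finite connected infinitesimal scheme (hence, as a scheme, an affine space by the explicit description in the proof of Theorem \ref{thm:triangulation} — each $W_{n_i(1)}$ has underlying scheme $\Spec k[t_0,\dots,t_{n_i-1}]/(t_0^p,\dots)$, which is \emph{not} an affine space). Let me instead argue on the level of the quotient directly. The cleanest route avoids rationality of $G(m;\bm{n})^{(p)}$ as an abstract variety and instead uses that it is \emph{smooth and connected} over the algebraically closed field $k$, hence rational by \cite[14.14 Remark]{Borel} — but $G(m;\bm{n})^{(p)}$ need not be connected a priori. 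So the honest step is: $G(m;\bm{n})^{(p)}$ is the base change of $G(m;\bm{n})$ along Frobenius; since $k=\overline{k}$ we have $G(m;\bm{n})^{(p)}\simeq G(m;\bm{n})$ abstractly as $k$-group schemes only after twisting, so I would rather write $G(m;\bm{n})^{(p)}=G(m;\bm{n})\otimes_{k,F}k$ and note $k\xrightarrow{F}k$ is an isomorphism as $k$ is perfect, giving an (abstract, not $k$-linear) isomorphism $G(m;\bm{n})^{(p)}\cong G(m;\bm{n})$ of schemes. Hence it suffices to show $G(m;\bm{n})$ is rational as a $k$-variety, and for that I use Theorem \ref{thm:triangulation}: $G(m;\bm{n})\simeq G^+\times\GL_m\times G^-$ as schemes, $G^+\simeq\A^N_k$ is rational (Proposition \ref{prop:G^- uni}), $\GL_m$ is rational, and $G^-$ is a finite infinitesimal scheme — whose reduced subscheme is a point, so $G(m;\bm{n})_{\mathrm{red}}\simeq G^+\times\GL_m$ is rational, and since rationality is a birational property of the reduction this suffices.

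For \emph{specialness} of $G(m;\bm{n})^{(p)}$: since it is a twist of $G(m;\bm{n})$ and specialness can be checked after base change of the base field, it is equivalent to ask whether $G(m;\bm{n})$ is special. Here I again use the triangulation as a sequence of subgroup schemes. By Theorem \ref{thm-int:triangulation}(iv) we have $G(m;\bm{n})=G^+\rtimes G^0$ when we work with the quotient $G(m;\bm{n})/G^-$ — more precisely, the triangulation realizes $G(m;\bm{n})$ as an iterated extension with smooth unipotent successive quotient $G^+$ (special by Example \ref{ex:special}(1), being $k$-split solvable smooth connected), middle piece $G^0\simeq\GL_m$ (special by Example \ref{ex:special}(2)), and $G^-\simeq\prod W_{n_i(1)}$ which is \emph{not} special (it is infinitesimal nontrivial). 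So $G(m;\bm{n})$ itself is not special, and neither is $G(m;\bm{n})^{(p)}$; the hypotheses of Proposition \ref{prop:special G''} are not met with this choice of $G''$. \textbf{The main obstacle}, then, is that Proposition \ref{prop:special G''} cannot be applied to the Frobenius sequence directly. The fix — and the real content of the corollary — is to use instead that the quotient $G(m;\bm{n})\to G(m;\bm{n})/\Gamma(m;\bm{n})=G(m;\bm{n})^{(p)}$ sits inside a standard torsor $U\to U/G(m;\bm{n})$ with $U\to U/\Gamma(m;\bm{n})$ a standard $\Gamma(m;\bm{n})$-torsor and $U/\Gamma(m;\bm{n})\to U/G(m;\bm{n})$ a $G(m;\bm{n})^{(p)}$-torsor; one shows this last torsor is \emph{generically trivial} not because $G(m;\bm{n})^{(p)}$ is special but because, over the function field $k(U/G(m;\bm{n}))$, any $G(m;\bm{n})^{(p)}$-torsor that is the pushforward of a $G(m;\bm{n})$-torsor along Frobenius becomes trivial after the purely inseparable base change $k(U/G(m;\bm{n}))^{1/p}$, and the standard $G(m;\bm{n})$-torsor trivializes such a torsor by construction — indeed $U/\Gamma(m;\bm{n})\to U/G(m;\bm{n})$ has a section over the locus where the map $U\to U/G(m;\bm{n})$ does, up to a Frobenius twist. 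Concretely: the standard torsor $U\to U/G(m;\bm{n})$ becomes trivial over some dense open, so $U/\Gamma(m;\bm{n})\to U/G(m;\bm{n})$, being dominated by it, is generically isomorphic to $(U/G(m;\bm{n}))\times G(m;\bm{n})^{(p)}$; combined with rationality of $G(m;\bm{n})^{(p)}$ established above, $U/\Gamma(m;\bm{n})$ is stably birational to $U/G(m;\bm{n})$, i.e.\ $B\Gamma(m;\bm{n})$ is stably birational to $BG(m;\bm{n})$. I would carry this out by extracting from the proof of Proposition \ref{prop:special G''} the weaker statement that only \emph{generic triviality} of the $G''$-torsor $U/G'\to U/G$ plus rationality of $G''$ is needed, and verifying generic triviality here via the trivializing section of the standard $G(m;\bm{n})$-torsor, which lifts through $U/\Gamma(m;\bm{n})$ automatically.
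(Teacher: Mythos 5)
Your proposal goes off the rails at the very first displayed sequence: $1\to\Gamma(m;\bm{n})\to G(m;\bm{n})\xrightarrow{F}G(m;\bm{n})^{(p)}\to 1$ is \emph{not} exact, because $G(m;\bm{n})$ is not smooth (by Theorem \ref{thm:triangulation} it contains the infinitesimal direct factor $G^-\simeq\prod_i W_{n_i(1)}$ as a scheme), so the relative Frobenius is not an fppf epimorphism. The fppf quotient $G(m;\bm{n})/\Gamma(m;\bm{n})$ is the \emph{image} of $F$, and this is exactly where the triangulation is needed in the paper: since $G^-$ has height one it is killed by $F$, while $G^+\rtimes G^0$ is smooth, so the quotient is $(G^+\rtimes G^0)^{(p)}$, giving the exact sequence $1\to\Gamma(m;\bm{n})\to G(m;\bm{n})\to(G^+\rtimes G^0)^{(p)}\to 1$. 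This quotient \emph{is} special — it is an extension of $(G^0)^{(p)}\simeq\GL_m$ by the smooth (split) unipotent group $(G^+)^{(p)}$, so Remark \ref{rem:special}(2) applies — and it is rational as a variety (an affine space times $\GL_m$). Hence Proposition \ref{prop:special G''} applies directly, with no need for any workaround. You correctly observed that $G(m;\bm{n})^{(p)}$ is not special, but the right conclusion from that observation is that you have mis-identified the quotient, not that the speciality route fails.

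Your attempted patch does not repair the gap. It rests on the claim that the standard torsor $U\to U/G(m;\bm{n})$ ``becomes trivial over some dense open'' of $U/G(m;\bm{n})$; this is false. The generic fiber of a standard torsor is a versal torsor, and if it were trivial then by specialization every $G(m;\bm{n})$-torsor over an infinite field would be trivial, i.e.\ $G(m;\bm{n})$ would be special — impossible, since special groups are smooth (Remark \ref{rem:special}(1)). The fact that $U\to U/G(m;\bm{n})$ factors through $U/\Gamma(m;\bm{n})$ only says the intermediate torsor trivializes after pullback to $U$, which is far from generic triviality over $U/G(m;\bm{n})$; and in any case $U/\Gamma(m;\bm{n})\to U/G(m;\bm{n})$ is a torsor under $G(m;\bm{n})/\Gamma(m;\bm{n})=(G^+\rtimes G^0)^{(p)}$, not under $G(m;\bm{n})^{(p)}$. (Generic — indeed Zariski-local — triviality of this torsor is true, but precisely because its structure group is the special group $(G^+\rtimes G^0)^{(p)}$, which is the point your argument was trying to avoid.) The rationality discussion is also not needed in the form you give it: what enters Proposition \ref{prop:special G''} is rationality of the quotient $(G^+\rtimes G^0)^{(p)}$, which is immediate.
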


\begin{proof}
By Theorem \ref{thm:triangulation}, there exists an exact sequence of affine group schemes of finite type over $k$.
\begin{equation}\label{eq:cor triangulation}
1\lto\Gamma(m;\bm{n})\lto G(m;\bm{n})\xrightarrow{~F~} (G^+\rtimes G^0)^{(p)}\lto 1.
\end{equation}
As $(G^+)^{(p)}$ and $(G^0)^{(p)}$ are special algebraic groups (cf.\,Definition \ref{def:G^0} and Proposition \ref{prop:G^- uni}), so is the semi-direct product $(G^+\rtimes G^0)^{(p)}=(G^+)^{(p)}\rtimes (G^0)^{(p)}$ (cf.\,Remark \ref{rem:special}(2)). Therefore, the assertion follows from Proposition \ref{prop:special G''}.  
\end{proof}

The following consequence implies that purely inseparable extensions of exponent one are enough to trivialize any twisted forms of the generalized Witt algebras. 

\begin{cor}\label{cor2:triangulation}
Let $R$ be a regular local ring over $k$ with fraction field $K$. Let $L$ be a Lie algebra over $R$ which is a twisted form of $W(m;\bm{n})\otimes_kR$. If $R^{1/p}$ is the integral closure of $R$ in the maximal purely inseparable extension $K^{1/p}$ of $K$ of exponent one, then there exists an $R^{1/p}$-isomorphism of Lie algebras $L\otimes_RR^{1/p}\simeq W(m;\bm{n})\otimes_kR^{1/p}$. 
\end{cor}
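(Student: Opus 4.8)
The plan is to pass to the fraction field, trivialize there over a finite purely inseparable extension of exponent one, and then spread out the isomorphism to $R^{1/p}$ using regularity. First I would invoke the fact that over any field of characteristic $p > 3$ every twisted form of $W(m;\bm{n})$ is trivialized by a finite purely inseparable extension: indeed, by Theorem \ref{thm:Aut W} the automorphism functor is $G(m;\bm{n}) = \uAut_k(W(m;\bm{n}))$, and by Corollary \ref{cor:triangulation} (more precisely by the exact sequence (\ref{eq:cor triangulation}) and the fact that $(G^+ \rtimes G^0)^{(p)}$ is smooth and connected, hence $H^1$-trivial over a field), the pointed set $H^1_{\fppf}(K, G(m;\bm{n}))$ injects into $H^1_{\fppf}(K, \Gamma(m;\bm{n}))$, and the class of $\Gamma(m;\bm{n})$ being of height one is killed by the Frobenius, i.e.\ by the extension $K \hookrightarrow K^{1/p}$. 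Concretely, the class of $L \otimes_R K$ in $H^1_{\fppf}(K, G(m;\bm{n}))$ comes from a $\Gamma(m;\bm{n})$-torsor; pulling back along $K \to K^{1/p}$ (which factors the relative Frobenius of $K$) kills the $\Gamma(m;\bm{n})$-class, and since the remaining quotient $(G^+ \rtimes G^0)^{(p)}$ is special, the $G(m;\bm{n})$-class dies too. Hence there is a $K^{1/p}$-isomorphism $L \otimes_R K^{1/p} \simeq W(m;\bm{n}) \otimes_k K^{1/p}$.

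Next I would descend this to an $R^{1/p}$-isomorphism of Lie algebras. Since $R$ is regular local over the perfect field $k$, $R^{1/p} = R \otimes_{k, \mathrm{Frob}} k^{1/p} \cdots$ — more precisely $R^{1/p}$ is a finite $R$-module and is again a regular local ring with fraction field $K^{1/p}$ (regularity of $R^{1/p}$ follows since $k$ is perfect, so $R^{1/p} \cong R \otimes_{\phi, R} R^{1/p}$ is, up to a power of the Frobenius, isomorphic to $R$ itself as a ring). Now both $L \otimes_R R^{1/p}$ and $W(m;\bm{n}) \otimes_k R^{1/p}$ are finite free $R^{1/p}$-modules equipped with Lie brackets, and we have an isomorphism between them after tensoring with $K^{1/p}$. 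The key point is that such an isomorphism, given by a matrix in $\GL_N(K^{1/p})$ intertwining the two brackets, actually has entries in $R^{1/p}$ and an inverse with entries in $R^{1/p}$: this is where I expect the main obstacle to lie. One clean way to handle it: the scheme $\mathrm{Isom}_{R^{1/p}}(L \otimes_R R^{1/p}, W(m;\bm{n}) \otimes_k R^{1/p})$ of Lie-algebra isomorphisms is a torsor under $G(m;\bm{n})_{R^{1/p}}$ (using Theorem \ref{thm:Aut W}, this is the automorphism group scheme), and we must show it has an $R^{1/p}$-point given that it has a $K^{1/p}$-point. Since $R^{1/p}$ is regular local and the torsor is under the affine group scheme $G(m;\bm{n})$, I would argue that the torsor extends to a $\Gamma(m;\bm{n})$-torsor over $R^{1/p}$ composed with a $(G^+ \rtimes G^0)^{(p)}$-torsor, the latter being Zariski-locally trivial (hence trivial over the local ring $R^{1/p}$) because $(G^+ \rtimes G^0)^{(p)}$ is special; it then remains to trivialize the residual $\Gamma(m;\bm{n})$-torsor over $R^{1/p}$, and a $\Gamma(m;\bm{n}) = G(m;\bm{n})_{(1)}$-torsor over $R^{1/p}$ that is trivial over $K^{1/p}$ is in fact pulled back from the relative Frobenius — but over $R^{1/p}$ the relative Frobenius lands in $k$-coefficients, so the torsor is already trivial.

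Cleaning this up, the argument is: (1) the class of $L$ over $K$ lives in $H^1_{\fppf}(K, G(m;\bm{n}))$; by (\ref{eq:cor triangulation}) plus speciality of $(G^+ \rtimes G^0)^{(p)}$, its image in $H^1_{\fppf}(K, \Gamma(m;\bm{n})^{(p)\text{-coker}})$ analysis shows the pullback to $K^{1/p}$ is trivial, since $\Gamma(m;\bm{n})$ has height one and $K \hookrightarrow K^{1/p}$ absorbs one Frobenius twist; (2) regularity of $R^{1/p}$ and properness-type spreading-out arguments (or directly: purity for torsors under finite flat group schemes of height one over regular local rings, as in the references already cited) then upgrade the trivialization from $K^{1/p}$ to $R^{1/p}$; (3) conclude that $L \otimes_R R^{1/p} \simeq W(m;\bm{n}) \otimes_k R^{1/p}$. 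The delicate step is (2), controlling the torsor over the whole of $R^{1/p}$ rather than just its generic point; I would handle it by first splitting off the special quotient $(G^+ \rtimes G^0)^{(p)}$ — which is automatically Zariski-locally, hence locally-ring, trivial — and then noting that the leftover $\Gamma(m;\bm{n})$-torsor, being trivial generically and the group being infinitesimal of height one, is classified by a Frobenius-twisted cohomology group that sees no difference between $R^{1/p}$ and $K^{1/p}$ over the perfect base $k$.
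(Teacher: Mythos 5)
Your first step (trivializing over $K^{1/p}$) is essentially sound, apart from a slip of direction: the exact sequence (\ref{eq:cor triangulation}) together with the speciality of $(G^+\rtimes G^0)^{(p)}$ gives a \emph{surjection} $H^1_{\fppf}(K,\Gamma(m;\bm{n}))\to H^1_{\fppf}(K,G(m;\bm{n}))$, not an injection of $H^1_{\fppf}(K,G(m;\bm{n}))$ into $H^1_{\fppf}(K,\Gamma(m;\bm{n}))$; you then use the correct statement anyway. The genuine gap is your step (2): upgrading the isomorphism from $K^{1/p}$ to $R^{1/p}$. What you need there is precisely that a $G(m;\bm{n})$-torsor over the regular local ring $R^{1/p}$ which is trivial over its fraction field $K^{1/p}$ is trivial, i.e.\ the triviality of the kernel of $H^1_{\fppf}(R^{1/p},G(m;\bm{n}))\to H^1_{\fppf}(K^{1/p},G(m;\bm{n}))$. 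This is a Grothendieck--Serre-type purity statement; in the paper it is a separate result (Corollary \ref{cor3:triangulation}) whose proof is not formal --- it passes through a limit argument and the surjectivity of restriction maps of Nori fundamental group schemes. Your justification (``the leftover $\Gamma(m;\bm{n})$-torsor \dots is classified by a Frobenius-twisted cohomology group that sees no difference between $R^{1/p}$ and $K^{1/p}$'', and ``a $\Gamma(m;\bm{n})$-torsor over $R^{1/p}$ that is trivial over $K^{1/p}$ is pulled back from the relative Frobenius, so already trivial'') is not an argument: generic triviality of a torsor under an infinitesimal group scheme does not by itself give triviality, and nothing you wrote substitutes for the missing injectivity.

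The detour through the generic point is also unnecessary, and avoiding it is exactly how the paper's proof works. Run your first step over $R$ itself: since $R$ is local and $G^+\rtimes G^0$ is special, $H^1_{\fppf}(R,(G^+\rtimes G^0)^{(p)})=1$, so by (\ref{eq:cor triangulation}) the class of $L$ in $H^1_{\fppf}(R,G(m;\bm{n}))$ lifts to a class in $H^1_{\fppf}(R,\Gamma(m;\bm{n}))$; and because $\Gamma(m;\bm{n})$ has height one, any $\Gamma(m;\bm{n})$-torsor over $R$ becomes trivial after the base change $R\to R^{1/p}$, which dominates the Frobenius of $R$. Hence the whole restriction map $H^1_{\fppf}(R,G(m;\bm{n}))\to H^1_{\fppf}(R^{1/p},G(m;\bm{n}))$ is trivial and $L\otimes_RR^{1/p}\simeq W(m;\bm{n})\otimes_kR^{1/p}$, with no purity input needed. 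If you insist on your route, you must either prove the injectivity statement of Corollary \ref{cor3:triangulation} or cite it; as written, your proof is incomplete at its decisive step.
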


\begin{proof}
As the isomorphism classes of the twisted forms of $W(m;\bm{n})\otimes_kR$ are exactly classified by the pointed set of cohomology classes $H^1_{\fppf}(R,G(m;\bm{n}))$. Therefore, it suffices to show that the restriction map $H^1_{\fppf}(R,G(m;\bm{n}))\to H^1_{\fppf}(R^{1/p},G(m;\bm{n}))$ is the trivial map. By the short exact sequence (\ref{eq:cor triangulation}), we have an exact sequence of pointed sets
\[
H^1_{\fppf}(R,\Gamma(m;\bm{n}))\lto H^1_{\fppf}(R,G(m;\bm{n}))\lto 1.
\] 
Notice here that $H^1_{\fppf}(R,G^+\rtimes G^0)=1$ again by the speciality of $G^+\rtimes G^0$. As $\Gamma(m;\bm{n})$ is a height one group scheme, this implies the triviality of the map $H^1_{\fppf}(R,G(m;\bm{n}))\to H^1_{\fppf}(R^{1/p},G(m;\bm{n}))$. This completes the proof. 
\end{proof}

Moreover, we obtain the following injectivity result for $G(m;\bm{n})$-torsors. 

\begin{cor}\label{cor3:triangulation}
Let $R$ be a regular local ring over $k$ with fraction field $K$. The kernel of the restriction map $H^1_{\fppf}(R,G(m;\bm{n}))\to H^1_{\fppf}(K,G(m;\bm{n}))$ is trivial. 
\end{cor}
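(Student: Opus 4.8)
The plan is to combine the decomposition of Theorem \ref{thm:triangulation} with the fact that $\Gamma(m;\bm{n})$ is of height one, reducing the injectivity statement to a purely inseparable descent argument over the regular local ring $R$. First I would use the exact sequence \eqref{eq:cor triangulation}, namely $1\to\Gamma(m;\bm{n})\to G(m;\bm{n})\xrightarrow{F}(G^+\rtimes G^0)^{(p)}\to 1$, together with the speciality of $G^+\rtimes G^0$ (hence of its Frobenius twist): since $H^1_{\fppf}(R,(G^+\rtimes G^0)^{(p)})=1$ and likewise over $K$, the induced maps $H^1_{\fppf}(R,\Gamma(m;\bm{n}))\to H^1_{\fppf}(R,G(m;\bm{n}))$ and $H^1_{\fppf}(K,\Gamma(m;\bm{n}))\to H^1_{\fppf}(K,G(m;\bm{n}))$ are surjective. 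Moreover, a diagram chase with the long exact cohomology sequence (using that $H^0_{\fppf}(-,(G^+\rtimes G^0)^{(p)})$ is a smooth connected group, so its $R$-points surject onto its $K$-points up to the relevant twist — more precisely, using that $(G^+\rtimes G^0)^{(p)}$-torsors are Zariski-locally trivial on the regular local ring $R$) shows that an element of $\Ker\big(H^1_{\fppf}(R,G(m;\bm{n}))\to H^1_{\fppf}(K,G(m;\bm{n}))\big)$ lifts to an element of $H^1_{\fppf}(R,\Gamma(m;\bm{n}))$ whose image in $H^1_{\fppf}(K,\Gamma(m;\bm{n}))$ lies in the image of $H^1_{\fppf}(K,(G^+\rtimes G^0)^{(p)}(K))=1$; hence it suffices to prove that the kernel of $H^1_{\fppf}(R,\Gamma(m;\bm{n}))\to H^1_{\fppf}(K,\Gamma(m;\bm{n}))$ maps to the trivial class in $H^1_{\fppf}(R,G(m;\bm{n}))$.

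Next, since $\Gamma(m;\bm{n})=\Ker(F\colon G(m;\bm{n})\to G(m;\bm{n})^{(p)})$ is a finite group scheme of height one, any $\Gamma(m;\bm{n})$-torsor over $R$ becomes trivial after the base change $R\to R^{1/p}$ to the integral closure of $R$ in the maximal purely inseparable exponent-one extension $K^{1/p}$ of $K$; this is exactly the mechanism already used in the proof of Corollary \ref{cor2:triangulation}. So a class $\xi\in H^1_{\fppf}(R,G(m;\bm{n}))$ lying in the kernel over $K$ is, after the faithfully flat (indeed purely inseparable) cover $\Spec R^{1/p}\to\Spec R$, trivial. To descend triviality from $R^{1/p}$ back to $R$ I would invoke that the twisted form $L$ of $W(m;\bm{n})\otimes_k R$ corresponding to $\xi$ has the property that $L\otimes_R K\simeq W(m;\bm{n})\otimes_k K$ as well as $L\otimes_R R^{1/p}\simeq W(m;\bm{n})\otimes_k R^{1/p}$, and then argue that an isomorphism over $R^{1/p}$ together with one over $K$ can be glued, using that $R$ is a normal (indeed regular) domain and $R=R^{1/p}\cap K$ inside $K^{1/p}$: the cocycle measuring the failure of the $R^{1/p}$-isomorphism to descend is an element of $\Aut$ of $W(m;\bm{n})$ over $R^{1/p}\otimes_R R^{1/p}$ which becomes trivial over $K$, hence (by the separatedness/normality of $R$) is already trivial over $R$.

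The main obstacle I expect is precisely this last descent step: showing that an $R^{1/p}$-isomorphism $L\otimes_R R^{1/p}\simeq W(m;\bm{n})\otimes_k R^{1/p}$ which is compatible with the generic isomorphism over $K$ actually descends to an $R$-isomorphism. The automorphism group scheme $G(m;\bm{n})$ is non-smooth (it has the infinitesimal part $\Gamma(m;\bm{n})$), so one cannot simply invoke smoothness; instead I would use the triangulation $G^+\times G^0\times G^-\xrightarrow{\sim}G(m;\bm{n})$ as schemes to write the transition automorphism in coordinates $(\varphi^+,\varphi^0,\varphi^-)$ and check componentwise that each component, being a section of a smooth ($G^+$, $G^0$) or a height-one unipotent ($G^-\simeq\prod W_{n_i(1)}$) group scheme over the normal domain $R^{1/p}\otimes_R R^{1/p}$ that dies generically, must already be trivial — for the $G^-$-part one uses that its coordinates are $p$-nilpotent regular functions vanishing at the generic point, hence identically zero on the reduced scheme $\Spec R$, while for the smooth parts one uses that $R$ is integrally closed in $K$. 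Assembling these gives that $\xi$ is trivial over $R$, completing the proof.
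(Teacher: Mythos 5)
Your opening reduction via the sequence (\ref{eq:cor triangulation}) and the speciality of $G^+\rtimes G^0$ is in the spirit of the paper, and the fact that every class dies over $R^{1/p}$ is exactly Corollary \ref{cor2:triangulation}; but the heart of the matter --- deducing triviality over $R$ from triviality over $R^{1/p}$ together with triviality over $K$ --- is precisely where your argument breaks down. The descent datum you propose to analyze lives in $G(m;\bm{n})(R^{1/p}\otimes_R R^{1/p})$, and this ring is neither a domain nor even reduced (for $a\in R^{1/p}$ the element $a\otimes 1-1\otimes a$ is nilpotent of exponent $p$), so the steps ``vanishes at the generic point, hence vanishes on the reduced scheme'' and ``use that $R$ is a normal domain'' have no meaning there. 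More seriously, generic triviality of the class does not say that any cocycle representative dies generically: it only says that over $K^{1/p}\otimes_K K^{1/p}$ the cocycle becomes a coboundary of elements of $G(m;\bm{n})(K^{1/p})$, and there is no reason these elements extend to $R^{1/p}$ or are compatible with your chosen trivialization over $R^{1/p}$; controlling this obstruction is the whole content of the statement. Since Theorem \ref{thm:triangulation} gives the decomposition $G^+\times G^0\times G^-\simeq G(m;\bm{n})$ only as schemes, not as groups, checking the three components of a transition automorphism separately does not interact with the cocycle or coboundary conditions, so ``componentwise triviality'' is not a valid substitute. A further slip occurs in your first reduction: the kernel of $H^1_{\fppf}(K,\Gamma(m;\bm{n}))\to H^1_{\fppf}(K,G(m;\bm{n}))$ is the image of the connecting map from $(G^+\rtimes G^0)^{(p)}(K)$, which is not trivial, and since $(G^+\rtimes G^0)^{(p)}(R)\to (G^+\rtimes G^0)^{(p)}(K)$ is not surjective you cannot arrange the lift of $\xi$ to lie in the kernel of $H^1_{\fppf}(R,\Gamma(m;\bm{n}))\to H^1_{\fppf}(K,\Gamma(m;\bm{n}))$.

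The paper's proof is structured precisely to avoid this inseparable-descent step. It passes to the partial quotient $Q=P/\Gamma(m;\bm{n})$, uses speciality of $H=G(m;\bm{n})/\Gamma(m;\bm{n})$ over the local ring $R$ to identify $Q\simeq H\otimes_k R$, and then regards $P\to Q$ as a $\Gamma(m;\bm{n})$-torsor over the scheme $H\otimes_k R$. Writing $R=\varinjlim_i R_i$ as a filtered colimit of smooth $k$-algebras, the torsor spreads out to $\widetilde{P}\to H\otimes_k R_i$, generic triviality gives a trivialization over $H\times_k U$ for a dense open $U\subset\Spec R_i$, and the surjectivity of Nori fundamental group schemes $\pi^{\rm N}(H\times_k U)\twoheadrightarrow\pi^{\rm N}(H\times_k\Spec R_i)$ forces the finite torsor $\widetilde{P}$ to be trivial, hence $P(R)\neq\emptyset$. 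If you wish to salvage your route, you would need an independent injectivity statement for purely inseparable descent of torsors under the non-smooth group $G(m;\bm{n})$, which is not easier than the corollary itself.
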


\begin{proof}
Let $P\to\Spec R$ be an fppf $G(m;\bm{n})$-torsor such that the base change $P\otimes_RK\to\Spec K$ is a trivial $G(m;\bm{n})$-torsor. We have to show that $P(R)\neq\emptyset$. Let $Q\Def P/\Gamma(m;\bm{n})\to\Spec R$ be the induced $H\Def G(m;\bm{n})/\Gamma(m;\bm{n})$-torsor. The short exact sequence (\ref{eq:cor triangulation}) implies that $H$ is special (see also the proof of Corollary \ref{cor:triangulation}). As $R$ is a local ring, the $H$-torsor $Q\to\Spec R$ admits a section $s\colon\Spec R\to Q$ (cf.\ Definition \ref{def:special}), which defines an isomorphism of $R$-schemes $H\otimes R\xrightarrow{\simeq}Q$. Via this isomorphism, we consider $P\to Q$ as a $\Gamma(m;\bm{n})$-torsor over $H\otimes_kR$. If we write $R=\varinjlim_i R_i$ as a colimit of smooth $k$-algebras $R_i$, the $\Gamma(m;\bm{n})$-torsor $P\to H\otimes_k R$ extends to a $\Gamma(m;\bm{n})$-torsor $\widetilde{P}\to H\otimes_k R_i$ for some $i$ with $\widetilde{P}\otimes_{R_i}{\Frac R_i}\simeq G(m;\bm{n})\otimes_k\Frac R_i$. This implies that there exist an affine dense open subset $U\subset \Spec R_i$ and an isomorphism of $\Gamma(m;\bm{n})$-torsors $\widetilde{P}|_U\simeq G(m;\bm{n})\times U$ over $H\times_kU$. As $\Gamma(m;\bm{n})$ is finite $k$-group scheme, the surjectivity of the restriction homomorphism between Nori's fundamental group schemes $\pi^{\rm N}(H\times_kU)\twoheadrightarrow\pi^{\rm N}(H\times_k\Spec R_i)$ \cite[Chapter II, \S2]{Nori}\cite[Theorem II]{RTZ}  implies that there exists an isomorphism of $\Gamma(m;\bm{n})$-torsors $\widetilde{P}\simeq G(m;\bm{n})\times_k\Spec R_i$ over $H\times_k\Spec R_i$. Therefore, we have an isomorphism of schemes $P\simeq G(m;\bm{n})\otimes_kR$ over $H\otimes_kR$ and they are $R$-isomorphic to each other. This implies that $P(R)\neq\emptyset$ as desired. This completes the proof.  
\end{proof}


\subsection{The Witt--Ree algebras over general base rings}\label{sec:Witt-Ree}

In this subsection, we introduce a relative version of the notion of \textit{Witt--Ree algebra} in the sense of \cite{Waterhouse} and discuss their properties.  
Let $k$ be a field of characteristic $p>0$ and $R$ a $k$-algebra. An $R$-algebra $A$ is said to be {\it purely inseparable of height one} if it is isomorphic to an $R$-algebra of the form $R[x_1,\dots,x_n]/(x_1^p-a_1,\dots,x_n^p-a_n)$ for some $n$ and some $a_i\in R$. 
Let $A=R[x_1,\dots,x_n]/(x_1^p-a_1,\dots,x_n^p-a_n)$ be a purely inseparable $R$-algebra of height one. Then the derivation algebra $\Der_R(A)$ on $A$ over $R$ is a free $A$-module with basis $\delta_1,\dots,\delta_n$, where $\delta_i(x_j)=\delta_{ij}$ is the Kronecker delta.

\begin{definition}\label{def:Witt-Ree}
Let $A$ be a purely inseparable algebra of height one over $R$. 
A Lie subalgebra $L\subset\Der_R(A)$ is called a \textit{Witt--Ree $R$-algebra} on $A$ if the following conditions are satisfied.
\begin{itemize}
\item[(WR1)] $L$ is a finite free $A$-submodule of $\Der_R(A)$. 
 
\item[(WR2)] Only the constants $R\cdot 1$ inside $A$ are annihilated by all $D\in L$.

\item[(WR3)] 
The center $Z(L)$ of $L$ is trivial, and for any maximal ideal $\frkm\subset R$ of $R$ with $\kappa(\frkm)=R_{\frkm}/\frkm R_{\frkm}$, the Lie algebra $L\otimes_R\kappa(\frkm)$ is a central simple Lie algebra over $\kappa(\frkm)$ in the sense of \cite[Chapter X,\S1]{Jacobson}. 
\end{itemize}
Moreover, $L$ is said to be of \textit{$D$-dimension $m$} if $L$ has  rank $m$ over $A$. 
\end{definition}

\begin{rem}
In the case where $R=\kappa$ is a field over $k$, then the above definition of Witt--Ree algebra is the same as the definition due to Waterhouse \cite[Section 1]{Waterhouse}.
\end{rem}

\begin{rem}\label{rem:Der A base change}
For a purely inseparable $R$-algebra of height one $A=R[x_1,\dots,x_n]/(x_1^p-a_1,\dots,x_n^p-a_n)$, we have $\Der_R(A)=\sum_{i=1}^nA\delta_i\simeq A^{\oplus n}$, where $\delta_{i}(x_j)=\delta_{ij}$. For any $k$-algebra homomorphism $R\to S$, we have $\Der_R(A)\otimes_RS=\Der_S(A\otimes_RS)$ and the natural restriction map of Lie algebras $\Der_R(A)\to\Der_S(A\otimes_RS)=\Der_R(A)\otimes_RS$ is nothing other than the base extension map $D\mapsto D\otimes {\rm id}_S$. In particular, if $L\subset\Der_R(A)$ is a Lie subalgebra over $R$, then $L\otimes_RS\subset\Der_S(A\otimes_RS)$ is a Lie subalgebra over $S$. 
\end{rem}

\begin{rem}\label{rem:orthogonal system}
Let $(R,\frkm)$ be a local integral domain over $k$ with $\kappa\Def R/\frkm$ residue field. 
Let $L\subset\Der_R(A)$ be a Witt--Ree $R$-algebra on $A$ of $D$-dimension $m$. Then $L$ admits a \textit{orthonormal system $\bfD=
\{D_1,\dots,D_m\}$ of derivations} (cf.\cite[\S3]{Ree}), i.e.\ there exist derivations $D_1,\dots,D_m\in\Der_R(A)$ and elements $g_1,\dots,g_m\in A$ such that  $D_i(g_j)=\delta_{ij}$ for any $i,j$, and moreover we have $L=\sum_{i=1}^mA \,D_i$. Indeed, let $\mathfrak{M}\subset A$ be the inverse image of the maximal ideal of $A=A\otimes_R\kappa=A/\frkm A$. Then $A$ is a local algebra with maximal ideal $\mathfrak{M}$. Let $\overline{L}\Def L\otimes_R\kappa\subset\Der_{\kappa}(A\otimes\kappa)$ be the base change of $L$ along the reduction map $R\twoheadrightarrow \kappa$. Let $\bfD=\{D_1,\dots, D_m\}$ be a free $A$-basis of $L$. Let $\overline{D}_i\Def D_i\otimes 1\in\overline{L}$ and $\overline{\bfD}\Def \{\overline{D}_1,\dots,\overline{D}_m\}$.  As $A\otimes\kappa$ is completely primary, by \cite[Theorem 3.5]{Ree}, there exist $f_1,\dots,f_m\in A\otimes\kappa$ such that $\Det (\overline{D}_i(f_j))\in (A\otimes\kappa)^*$. If $g_j\in A$ is a lift of $f_j$ for each $j$, then we have the image of $D_i(g_j)\in A$ in $A\otimes\kappa$ coincides with $\overline{D}_i(f_j)$ for any $i,j$. Therefore, $\Det (D_i(g_j))\in A^*=A\setminus \mathfrak{M}$. Let $(c_{ij})$ be the inverse matrix of the matrix $(D_i(g_j))$ and we set $D_i'\Def\sum_{j}c_{ij}D_j$. Then the elements $g_1,\dots,g_m$ satisfy $D'_i(g_j)=\delta_{ij}$ for any $i,j$. Therefore, $\bfD=\{D_1,\dots, D_m\}$ is equivalent to the orthonormal system of derivations $\bfD'\Def \{D_1',\dots,D'_m\}$. 
\end{rem}

\begin{lem}\label{lem:WR central simple}
Let $R$ be an integral domain over $k$ with fraction field $K$ and $L\subset\Der_R(A)$ a Witt--Ree $R$-algebra on $A$. 
\begin{enumerate}
\item $\End_{R\text{-lin}}(L)$ is generated as an $R$-algebra by $[D,-]$ and $[-,D]$ for all $D$ in $L$. 

\item $L\otimes_RK$ is central simple over $K$. 
\end{enumerate}
\end{lem}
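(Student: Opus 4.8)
The plan is to deduce both parts from the classical fact (see \cite[Chapter X, \S1]{Jacobson}) that, over a field $F$, a finite-dimensional Lie algebra $\fg$ with $\dim_F\fg\ge 2$ is central simple in Jacobson's sense if and only if the associative subalgebra of $\End_F(\fg)$ generated by the inner derivations $\ad(D)$, $D\in\fg$, is all of $\End_F(\fg)$. Since for a Lie algebra $[D,-]=\ad(D)$ and $[-,D]=-\ad(D)$, this associative subalgebra is exactly the multiplication algebra appearing in (1). Granting this criterion, I would prove (1) by a local-to-global argument combined with Nakayama's lemma, feeding in the central simplicity of the fibres $L\otimes_R\kappa(\frkm)$ from (WR3); part (2) would then follow from (1) by base change to $K$.

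For (1): write $A=R[x_1,\dots,x_n]/(x_1^p-a_1,\dots,x_n^p-a_n)$, so that $A$ is free of rank $p^n$ over $R$; by (WR1), $L$ is then a finite free $R$-module and $\End_R(L)$ is a finite free $R$-module (a matrix algebra over $R$). Note also that, combining (WR3) with the remark that a one-dimensional Lie algebra is abelian, the free module $L$ has rank $\ge 2$, hence every fibre of $L$ over a field has dimension $\ge 2$. Now let $M\subseteq\End_R(L)$ be the $R$-subalgebra generated by the operators $\ad(D)$, $D\in L$; the goal is to show the finitely generated $R$-module $Q\Def\End_R(L)/M$ is zero. Since this may be checked after localizing at each maximal ideal, and since localization is flat and compatible with the formation of $\ad$ (using freeness of $L$, so that $\End_R(L)$ localizes correctly and $L$ generates its localization as a module), I would reduce to the case that $R$ is local with maximal ideal $\frkm$ and residue field $\kappa$. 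Reducing modulo $\frkm$, the surjection $\End_R(L)\twoheadrightarrow\End_R(L)\otimes_R\kappa=\End_\kappa(L\otimes_R\kappa)$ sends $M$ onto the $\kappa$-subalgebra of $\End_\kappa(L\otimes_R\kappa)$ generated by the inner derivations of $L\otimes_R\kappa$; by (WR3) and $\dim_\kappa(L\otimes_R\kappa)\ge 2$, the Jacobson criterion identifies this subalgebra with all of $\End_\kappa(L\otimes_R\kappa)$. Hence $M+\frkm\,\End_R(L)=\End_R(L)$, i.e.\ $\frkm Q=Q$, and Nakayama's lemma yields $Q=0$, proving $\End_R(L)=M$.

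For (2): applying $-\otimes_RK$ to the equality $\End_R(L)=M$ from (1), and using that $L$ is finite free over $R$, one gets $\End_K(L\otimes_RK)=\End_R(L)\otimes_RK$, with the image of $M$ equal to the $K$-subalgebra generated by the operators $\ad(D)\otimes 1$, $D\in L$. Since $L$ spans $L\otimes_RK$ over $K$ and $\ad$ is $K$-linear on $L\otimes_RK$, this $K$-subalgebra coincides with the one generated by $\{\ad(D')\mid D'\in L\otimes_RK\}$; thus the multiplication algebra of the Lie algebra $L\otimes_RK$ equals $\End_K(L\otimes_RK)$. As $\dim_K(L\otimes_RK)=\rank_R L\ge 2$, the Jacobson criterion gives that $L\otimes_RK$ is central simple over $K$.

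The only substantive ingredient is the Jacobson criterion; everything else is routine bookkeeping with flat base change, the freeness of $L$ over $R$, and Nakayama's lemma. The one point I would be careful about is steering away from the degenerate one-dimensional case of that criterion — where "multiplication algebra $=\End$" and "central simple" can come apart — and this is precisely what (WR3) prevents, by forcing $\rank_R L\ge 2$.
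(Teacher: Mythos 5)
Your proof is correct and takes essentially the same route as the paper: for (1) you localize at maximal ideals, feed the central simplicity of the fibres from (WR3) into the Jacobson/Waterhouse criterion, and conclude by Nakayama using that $L$ (hence $\End_R(L)$) is finite free over $R$; for (2) you base-change the resulting equality to $K$. The only cosmetic difference is that the paper checks $Z(L\otimes_RK)=0$ directly from $Z(L)=0$ by clearing denominators, whereas you absorb centrelessness into the rank-at-least-two form of the criterion, which is a legitimate shortcut.
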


\begin{proof}
\begin{enumerate}
\item Let $L^{e}\subseteq\End_{R\text{-lin}}(L)$ be the $R$-subalgebra generated by $[D,-]$ and $[-,D]$ for all $D$ in an $R$-basis of $L$. It suffices to show that  $L^e\otimes R_{\frkm}=\End_{R\text{-lin}}(L)\otimes R_{\frkm}$ for any maximal ideal $\frkm$ of $R$. Let $\frkm$ be an arbitrary maximal ideal of $R$. Note that $L^e\otimes R_{\frkm}=(L\otimes R_{\frkm})^e\subseteq\End_{R_{\frkm}\text{-lin}}(L\otimes R_{\frkm})=\End_{R\text{-lin}}(L)\otimes R_{\frkm}$. On the other hand, by the condition (WR3), we have $L^e\otimes\kappa(\frkm)=(L\otimes\kappa(\frkm))^e=\End_{\kappa(\frkm)\text{-lin}}(L\otimes\kappa(\frkm))$ (cf.\cite[p.293]{Jacobson}\cite[Lemma 1.1]{Waterhouse}). Nakayama's lemma thus implies that $L^e\otimes R_{\frkm}=\End_{R_{\frkm}\text{-lin}}(L\otimes R_{\frkm})$. This completes the proof.

\item Let $Z(L\otimes_RK)$ be the center of $L\otimes_RK$. For any $D\in Z(L\otimes_RK)$, there exists an element $a\in R$ such that $aD\in L\cap Z(L\otimes_RK)\subseteq Z(L)=0$, hence $D=0$. This implies that $Z(L\otimes_RK)=0$. It remains to prove that $(L\otimes_RK)^e=\End_{K\text{-lin}}(L\otimes_RK)$ (cf.\cite[p.293]{Jacobson}\cite[Lemma 1.1]{Waterhouse}). However, by (1), we have $L^e=\End_{R\text{-lin}}(L)$, hence $(L\otimes_RK)^e=L^e\otimes_RK=\End_{R\text{-lin}}(L)\otimes_RK=\End_{K\text{-lin}}(L\otimes_RK)$. This completes the proof. 
\end{enumerate}
\end{proof}

\begin{rem}\label{rem:WR central simple}
\begin{enumerate}
\renewcommand{\labelenumi}{(\arabic{enumi})}
\item Let $L\subset\Der_R(A)$ be an arbitrary Lie $R$-subalgebra with $L$ center free. Let $L^e\subseteq\End_{R\text{-lin}}(L)$ be the $R$-subalgebra generated by $[D,-]$ and $[-,D]$ for all $D\in L$. Then the argument in the proof of the previous lemma implies that the following conditions are equivalent to each other.
\begin{enumerate}
\renewcommand{\labelenumii}{(\roman{enumii})}
\item $L^e=\End_{R\text{-lin}}(L)$. 

\item For any maximal ideal $\frkm\subset R$ of $R$ with $\kappa(\frkm)=R_{\frkm}/\frkm R_{\frkm}$, the Lie algebra $L\otimes_R\kappa(\frkm)$ is a central simple Lie algebra over $\kappa(\frkm)$. 
\end{enumerate}

\item The algebra $L^e$ is called the \textit{enveloping $R$-algebra} for $L$. For a center free Lie $R$-algebra, condition (WR3) is equivalent also to saying that the enveloping algebra $L^e$ is an \textit{ideal $R$-algebra} in the sense of Ranga Rao\cite{Rao}. 
\end{enumerate}
\end{rem}

\begin{prop}\label{prop:WR base change}
Let $f\colon R\to S$ be a faithfully flat $k$-algebra homomorphism of integral domains over $k$. The base change $L\otimes_R S\subset\Der_{R}(A)\otimes_RS=\Der_{S}(A\otimes_RS)$ of a Witt--Ree $R$-algebra $L\subset\Der_R(A)$ defines a Witt--Ree $S$-algebra on $A\otimes_RS$ of the same $D$-dimension. 
\end{prop}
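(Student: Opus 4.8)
The plan is to verify the three conditions (WR1), (WR2), (WR3) for $L\otimes_R S\subset\Der_R(A)\otimes_R S=\Der_S(A\otimes_R S)$ one at a time, using only that $f$ is flat (faithful flatness enters merely to guarantee $S\neq 0$, so that, $S$ being a domain, the statement is non-vacuous). Here the identification $\Der_R(A)\otimes_R S=\Der_S(A\otimes_R S)$ is Remark~\ref{rem:Der A base change}, and flatness keeps the inclusion $L\hookrightarrow\Der_R(A)$ injective after $-\otimes_R S$, so that $L\otimes_R S$ is genuinely a Lie $S$-subalgebra of $\Der_S(A\otimes_R S)$; note also that $A\otimes_R S\simeq S[x_1,\dots,x_n]/(x_1^p-f(a_1),\dots,x_n^p-f(a_n))$ is again purely inseparable of height one over $S$.

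Conditions (WR1) and (WR2) are routine. For (WR1), together with the equality of $D$-dimensions: writing $L=\bigoplus_{i=1}^m A\,D_i$ as a free $A$-module gives $L\otimes_R S=L\otimes_A(A\otimes_R S)=\bigoplus_{i=1}^m(A\otimes_R S)\,D_i$, free of rank $m$ over $A\otimes_R S$. For (WR2), since $L=\sum_{i=1}^m A\,D_i$, condition (WR2) for $L$ amounts to exactness of the sequence of $R$-modules
\[
0\lto R\lto A\xrightarrow{\ a\,\mapsto\,(D_1(a),\dots,D_m(a))\ }A^{\oplus m},
\]
with $R\cdot 1\subset A$ the kernel of the right-hand map. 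Applying the flat functor $-\otimes_R S$ and identifying the resulting arrow with $a'\mapsto\bigl((D_1\otimes\mathrm{id}_S)(a'),\dots,(D_m\otimes\mathrm{id}_S)(a')\bigr)$ shows that only $S\cdot 1$ is annihilated by all of $L\otimes_R S$.

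For (WR3), observe first that $L$ is finite free over $R$, being free over $A$ and $A$ free over $R$. Hence $Z(L)=\ker\bigl(L\to\End_{R\text{-lin}}(L),\ D\mapsto[D,-]\bigr)$, and applying $-\otimes_R S$ together with $\End_{R\text{-lin}}(L)\otimes_R S=\End_{S\text{-lin}}(L\otimes_R S)$ (valid since $L$ is finite free over $R$) gives $Z(L\otimes_R S)=Z(L)\otimes_R S=0$. Next, by Lemma~\ref{lem:WR central simple}(1) the enveloping algebra of $L$ satisfies $L^e=\End_{R\text{-lin}}(L)$; since $L^e$ is the $R$-subalgebra of $\End_{R\text{-lin}}(L)$ generated by $[D_i,-]$ and $[-,D_i]$ for $D_1,\dots,D_N$ a finite $R$-basis of $L$, and these same elements form an $S$-basis of $L\otimes_R S$ with $[D_i\otimes 1,-]=[D_i,-]\otimes\mathrm{id}_S$, flatness yields
\[
(L\otimes_R S)^e=L^e\otimes_R S=\End_{R\text{-lin}}(L)\otimes_R S=\End_{S\text{-lin}}(L\otimes_R S).
\]
Thus $L\otimes_R S$ is a center-free Lie $S$-subalgebra of $\Der_S(A\otimes_R S)$ whose enveloping $S$-algebra is the full endomorphism algebra, and Remark~\ref{rem:WR central simple}(1) applied over the base $S$ then shows that $(L\otimes_R S)\otimes_S\kappa(\mathfrak n)$ is central simple over $\kappa(\mathfrak n)$ for every maximal ideal $\mathfrak n\subset S$. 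This establishes (WR3) and completes the proof.

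The only step requiring genuine care is (WR3): rather than trying to compare the closed points of $\Spec S$ with those of $\Spec R$ directly (which is awkward, since a maximal ideal of $S$ need not contract to a maximal ideal of $R$), the argument routes through the intrinsic criterion of Remark~\ref{rem:WR central simple}(1), which identifies ``central simple at every closed point'' with the purely algebraic equality $L^e=\End_{R\text{-lin}}(L)$; this equality base-changes transparently along any flat map. Everything else reduces to left-exactness of $-\otimes_R S$ and the good behaviour of finite free modules under base change.
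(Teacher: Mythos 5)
Your proof is correct, and it follows the same overall skeleton as the paper's: check (WR1) via a free $A$-basis, and get (WR3) by combining centre-freeness with the enveloping-algebra identity $(L\otimes_RS)^e=L^e\otimes_RS=\End_{R\text{-lin}}(L)\otimes_RS=\End_{S\text{-lin}}(L\otimes_RS)$ and Remark \ref{rem:WR central simple}(1) over the base $S$ — that last chain is verbatim the paper's argument. Where you genuinely diverge is in the two remaining sub-steps. For (WR2) the paper expands a putative constant in an $R$-basis of $A$ and argues via the fraction fields $\Frac S\supseteq\Frac R$ (in the style of Ree's Lemma 7.8) to descend a nontrivial relation to $R$ and contradict (WR2) for $L$; you instead encode (WR2) as exactness of $0\to R\to A\to A^{\oplus m}$, $a\mapsto(D_1(a),\dots,D_m(a))$, and apply $-\otimes_RS$. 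For centre-freeness the paper passes through injectivity of $\ad\otimes\mathrm{id}_{\Frac S}$ (using central simplicity of $L\otimes_RK$ from Lemma \ref{lem:WR central simple}(2)), whereas you compute $Z(L\otimes_RS)=\ker(\ad\otimes\mathrm{id}_S)=Z(L)\otimes_RS=0$ directly, using only that $L$ is finite free over $R$ and $Z(L)=0$ from (WR3). Your versions of these steps are cleaner and strictly more general: they use only flatness, so neither the integral-domain hypotheses nor faithfulness is needed there (as you note, faithfulness only guarantees non-degeneracy of the statement), while the paper's route leans on the domain structure. Both are valid; the paper's fraction-field argument buys nothing extra here, so your streamlining is a genuine (if local) improvement.
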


\begin{proof}
If $\bfD=\{D_1,\dots,D_m\}$ is a free $A$-basis of $L$, then $\bfD\otimes S=\{D_1\otimes{\rm id}_S,\dots,D_m\otimes{\rm id}_S\}$ is a free $A\otimes_RS$-basis of $L\otimes_RS$. Hence, the condition (WR1) is satisfied for $L\otimes_RS$. 
We will check the condition (WR2) for $L\otimes_RS$. Let $\{u_1,u_2,\dots,u_N\}$ be a free $R$-basis of $A$ with $u_1=1$. Suppose that $f=\sum_{i=1}^{N}\alpha_i u_i\in A\otimes_RS$ with $\alpha_i\in S$ satisfies the condition that $(D_i\otimes{\rm id}_S)(f)=0$ for all $1\le i\le m$, i.e.\, 
\[
\alpha_2D_i(u_2)+\alpha_3D_i(u_3)+\cdots+\alpha_ND_i(u_N)=0\quad (1\le i\le m).
\]
If $\alpha_2,\alpha_3,\dots,\alpha_N\in S\subset\Frac S$ were not all zero, by the argument in the proof of \cite[Lemma 7.8]{Ree}, there exist $\beta_2,\beta_3,\dots,\beta_N\in K=\Frac R$, which are not all zero, such that 
\[
\beta_2D_i(u_2)+\beta_3D_i(u_3)+\cdots+\beta_ND_i(u_N)=0\quad (1\le i\le m).
\]
By multiplying some element of $R$, this implies that there exist $\gamma_2,\gamma_3,\dots,\gamma_N\in R$, which are not all zero, so that 
\[
\gamma_2D_i(u_2)+\gamma_3D_i(u_3)+\cdots+\gamma_ND_i(u_N)=0\quad (1\le i\le m).
\]
This implies that $g\Def\sum_{i=2}^N\gamma_iu_i\in A\setminus R$ satisfies $D_i(g)=0$ for $1\le i\le m$. This contradicts the condition (WR2) for $L$. 
Therefore, we must have $\alpha_2=\alpha_3=\cdots=\alpha_N=0$ and $f=\alpha_1\in S$, hence the condition (WR2) is satisfied for $L\otimes_R S$. 

Finally, we will check the condition (WR3) for $L\otimes_RS$. As the base change of the adjoint map ${ad}\otimes {\rm id}_{\Frac S}\colon L\otimes\Frac S\to\End_{R\text{-lin}}(L)\otimes_R\Frac S=\End_{\Frac S\text{-lin}}(L\otimes_R\Frac S)$ is injective, so is the adjoint  map ${ad}\colon L\otimes_RS\to\End_{S\text{-lin}}(L\otimes_RS)$. This implies that $L\otimes_RS$ is center free. Furthermore, we have $(L\otimes_RS)^{e}=L^e\otimes_RS=\End_{R\text{-lin}}(L)\otimes_RS=\End_{S\text{-lin}}(L\otimes_RS)$. Therefore, the condition (WR3) is fulfilled for $L\otimes_RS$ (cf.\,Remark \ref{rem:WR central simple}(1)). 
\end{proof}

\begin{rem}\label{rem:WR base change}
In particular, if $R$ is an integral domain with fraction field $K=\Frac R$ and if $L$ is a Witt--Ree $R$-algebra on $A$, for any prime ideal $\frkp$ of $R$, the localization $L\otimes_RR_{\frkp}$ is a Witt--Ree $R_{\frkp}$-algebra on $A\otimes_RR_{\frkp}$ of the same $D$-dimension. 
\end{rem}

\begin{prop}\label{prop:WR descent}
Let $(R,\frkm)$ be a local integral domain over $k$ and $A$ a purely inseparable $R$-algebra of height one. 
Let $f\colon R\to S$ be a faithfully flat $k$-algebra homomorphism. 
Let $L\subset\Der_R(A)$ be a Lie $R$-subalgebra of $R$-linear derivations on $A$.  
If the base change $L\otimes_R S\subset\Der_{R}(A)\otimes_RS=\Der_{S}(A\otimes_RS)$ is a Witt--Ree $S$-algebra on $A\otimes_RS$ of $D$-dimension $m$, 
then $L$ is a Witt--Ree $R$-algebra on $A$ of $D$-dimension $m$.
\end{prop}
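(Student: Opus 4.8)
The strategy is to check the three conditions (WR1), (WR2), (WR3) of Definition~\ref{def:Witt-Ree} for $L$ over $R$ by faithfully flat descent along $R\to S$, equivalently along $A\to A\otimes_R S$ (which is again faithfully flat, as $A$ is free over $R$). Two preliminary remarks set the stage. First, $A$ is a \emph{local} ring: since $A$ is finite over the local ring $(R,\frkm)$, every maximal ideal of $A$ contracts to $\frkm$ (an integral domain admitting a finite field extension is itself a field), and the special fibre $A/\frkm A=\kappa[x_1,\dots,x_n]/(x_i^p-\bar a_i)$ is local because after the faithfully flat base change $\kappa\to\overline\kappa$ it becomes $\overline\kappa[y_1,\dots,y_n]/(y_1^p,\dots,y_n^p)$ up to translation, which has a single prime; hence $\mathrm{Max}(A)=\mathrm{Max}(A/\frkm A)$ is a single point. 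Second, although $L$ is only assumed to be a Lie $R$-subalgebra, it is automatically an $A$-submodule of $\Der_R(A)$: for $a\in A$ and $D\in L$ one has $aD\otimes 1=(a\otimes 1)(D\otimes 1)\in L\otimes_R S$ since $L\otimes_R S$ is an $(A\otimes_R S)$-module, and faithful flatness makes $\Der_R(A)/L\to(\Der_R(A)/L)\otimes_R S$ injective, so $aD\in L$.

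For (WR1): the hypothesis says $L\otimes_R S=L\otimes_A(A\otimes_R S)$ is finite free of rank $m$ over $A\otimes_R S$; faithfully flat descent of the properties ``finitely generated'', ``finitely presented'' and ``flat'' along $A\to A\otimes_R S$ shows $L$ is a finite projective $A$-module, of rank $m$ since the rank can be checked after faithfully flat base change, and therefore free of rank $m$ because $A$ is local. For (WR2): the $f\in A$ killed by all of $L$ are exactly those killed by an $A$-basis $D_1,\dots,D_m$ of $L$ (a derivation satisfies $(aD+D')(f)=aD(f)+D'(f)$), so such an $f$ has $f\otimes 1\in A\otimes_R S$ killed by all of $L\otimes_R S$, hence $f\otimes 1\in S\cdot 1$ by (WR2) for $L\otimes_R S$; writing $A=R\cdot 1\oplus A'$ as $R$-modules (the monomial basis of $A$ contains $1$) and $f=c+f'$ accordingly, we get $f'\otimes 1=0$ in $A'\otimes_R S$, whence $f'=0$ by faithful flatness and $f=c\in R\cdot 1$.

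For (WR3): since $L$ is finite free over $A$ and $A$ is finite free over $R$, $L$ is finite free over $R$, so $\End_{R\text{-lin}}(L)$ commutes with the flat base change $-\otimes_R S$ and $Z(L)\otimes_R S=Z(L\otimes_R S)=0$, giving $Z(L)=0$ by faithful flatness. Next, with $L^e\subseteq\End_{R\text{-lin}}(L)$ the enveloping $R$-algebra, the fact that $L^e$ and $(L\otimes_R S)^e$ are generated by $[D,-]$ and $[-,D]$ for $D$ running over a module basis, together with flatness of $S$, gives $L^e\otimes_R S=(L\otimes_R S)^e$ inside $\End_{R\text{-lin}}(L)\otimes_R S=\End_{S\text{-lin}}(L\otimes_R S)$. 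Since $L\otimes_R S$ is a Witt--Ree algebra, Remark~\ref{rem:WR central simple}(1) over $S$ gives $(L\otimes_R S)^e=\End_{S\text{-lin}}(L\otimes_R S)$, so the inclusion $L^e\hookrightarrow\End_{R\text{-lin}}(L)$ is an isomorphism after $-\otimes_R S$, hence is an isomorphism; as $L$ is center-free, Remark~\ref{rem:WR central simple}(1) over $R$ now shows $L\otimes_R\kappa(\frkm)$ is central simple, proving (WR3). Finally the $D$-dimension of $L$ equals its $A$-rank $m$.

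I expect the main obstacle to be (WR1): one must descend ``finite free over $A$'' in a possibly non-Noetherian situation, which forces the detour through ``finite projective'' (obtained from the standard faithfully flat descent of finiteness and flatness properties of modules) followed by the observation that $A$ is local. Establishing that $A$ is local --- and that $L$ is automatically an $A$-submodule and not merely a Lie subalgebra --- is the other slightly delicate point, though both reduce to routine faithfully flat descent once correctly set up.
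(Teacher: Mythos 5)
Your proof is correct, and its overall skeleton matches the paper's: establish that $A$ is local, descend freeness along the faithfully flat map $A\to A\otimes_RS$ to get (WR1), use the decomposition of $A$ with respect to a basis containing $1$ for (WR2), and get center-freeness of $L$ from injectivity of the adjoint map after base change. The one step where you genuinely diverge is the central simplicity of $L\otimes_R\kappa(\frkm)$ in (WR3): the paper picks a prime $\frkp\subset S$ lying over $\frkm$, uses (WR3) for the localization $L\otimes_RS_{\frkp}$ to see that $L\otimes_R\kappa(\frkp)=(L\otimes_R\kappa(\frkm))\otimes_{\kappa(\frkm)}\kappa(\frkp)$ is central simple, and then descends central simplicity along the field extension $\kappa(\frkm)\subset\kappa(\frkp)$ via the argument of Waterhouse's Lemma 1.1; you instead descend the equality $L^e=\End_{R\text{-lin}}(L)$ itself by flat base change (using $L^e\otimes_RS=(L\otimes_RS)^e$ and Remark~\ref{rem:WR central simple}(1) over $S$) and then invoke the same remark over the local ring $R$ to conclude. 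Both mechanisms are sound; yours stays entirely within the enveloping-algebra formalism the paper has already set up and avoids the residue-field-extension detour (and the implicit appeal to localizing a Witt--Ree algebra over a not-necessarily-integral $S$), at the cost of needing $L$ finite free over $R$ so that $\End_{R\text{-lin}}(L)$ commutes with base change, which you have from (WR1). You also spell out two points the paper leaves implicit: that $L$, a priori only an $R$-Lie subalgebra, is automatically an $A$-submodule (by faithful flatness applied to $\Der_R(A)/L$), and that $A$ is local (the paper just cites Waterhouse); both verifications are correct.
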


\begin{proof}
By assumption, the Lie subalgebra $L\otimes_RS\subset\Der_S(A\otimes_RS)$ satisfies the conditions (WR1), (WR2) and (WR3). In particular, $L\otimes_RS$ is a free module of rank $m$ over $A\otimes_RS$. As the map $A\to A\otimes_RS$ is faithfully flat, this implies that $L$ is a projective $A$-module. However, as $R$ is local, so is the purely inseparable $R$-algebra of height one $A$ (cf.\ \cite[Proof of Lemma 3.4]{Waterhouse}). Therefore, $L$ is a free $A$-module of rank $m$, hence (WR1) is satisfied for $L$. Let $\bfD=\{D_1,\dots,D_m\}$ be a free $A$-basis of $L$. 
Let us show that $L$ satisfies the conditions (WR2) and (WR3). Let $\{u_1,u_2,\dots,u_N\}$ be a free $R$-basis of $A$ with $u_1=1$. Let $f=\sum_{i=1}^N\alpha_i u_i\in A$ with $\alpha_i\in R$ be an element which is annihilated by all $D\in L$. As $\bfD\otimes S=\{D_1\otimes{\rm id}_S,\dots,D_m\otimes{\rm id}_S\}$ is a free $A\otimes_RS$-basis of $L\otimes_RS$, the condition (WR2) for $L\otimes_RS$ implies that $\alpha_2=\cdots=\alpha_N=0$. Hence, we have $f=\alpha_1u_1=\alpha_1\in R$ and the condition (WR2) for $L$ holds true. 

It remains to verify the condition (WR3). As the adjoint map $ad\colon L\to\End_{R\text{-lin}}(L)$ is injective after the base change to $S$, it is injective over $R$, hence the center $Z(L)$ of $L$ is trivial. Moreover, let $\frkp\subset S$ be a prime ideal of $S$ lying above $\frkm$. Then by the condition (WR3) for $L\otimes_RS_{\frkp}$ (cf.\,Remark \ref{rem:WR base change}), we find that $L\otimes_R\kappa(\frkp)=L\otimes_{\kappa(\frkm)}\kappa(\frkp)$ is a central simple Lie algebra over $\kappa(\frkp)$. By the argument of the proof for \cite[Lemma 1.1]{Waterhouse}, this implies that $L$ is a central simple Lie algebra over $\kappa(\frkm)$, hence the condition (WR3) for $L$. This completes the proof.
\end{proof}

Before closing this subsection, we will extend Waterhouse's Theorem \ref{thm:Aut W} to more general base rings (see Proposition \ref{prop:Aut WR}). To this end, we show several lemmas. 

\begin{lem}\label{lem:WR reduction}
Let $R$ be an integral domain over $k$ with fraction field $K$ and $L\subset\Der_R(A)$ a Witt--Ree $R$-algebra on a purely inseparable $R$-algebra $A$ of height one. For any prime ideal $\frkp$ of $R$, the reduction $L\otimes\kappa(\frkp)\subset \Der_{\kappa(\frkp)}(A\otimes\kappa(\frkp))$ at $\frkp$ is a Witt--Ree $\kappa(\frkp)$-algebra on $A\otimes\kappa(\frkp)$. 
\end{lem}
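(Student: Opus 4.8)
The plan is to verify the three defining conditions (WR1), (WR2), (WR3) of Definition \ref{def:Witt-Ree} for the reduced Lie algebra $L\otimes_R\kappa(\frkp)\subset\Der_{\kappa(\frkp)}(A\otimes_R\kappa(\frkp))$. First I would reduce to the case where $R$ is local by replacing $R$ with the localization $R_{\frkp}$: indeed, by Remark \ref{rem:WR base change}, $L\otimes_RR_{\frkp}$ is a Witt--Ree $R_{\frkp}$-algebra on $A\otimes_RR_{\frkp}$ of the same $D$-dimension, and $\kappa(\frkp)$ is the residue field of $R_{\frkp}$. So it suffices to treat the reduction of a Witt--Ree algebra over a local integral domain $(R,\frkm)$ to its residue field $\kappa\Def R/\frkm$.

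Condition (WR1) is the easiest: if $\bfD=\{D_1,\dots,D_m\}$ is a free $A$-basis of $L$, then by Remark \ref{rem:Der A base change} the reductions $\{D_i\otimes 1\}$ form a free $A\otimes_R\kappa$-basis of $L\otimes_R\kappa$, so $L\otimes_R\kappa$ is a free $A\otimes_R\kappa$-module of the same rank $m$. For condition (WR2), I would use the explicit coordinate description as in the proof of Proposition \ref{prop:WR base change}: picking a free $R$-basis $\{u_1=1,u_2,\dots,u_N\}$ of $A$, the content of (WR2) for $L$ is that the matrix $(D_i(u_j))_{1\le i\le m, 2\le j\le N}$ over $A$ has trivial kernel as a map $A^{N-1}\to A^m$. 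This is not automatically preserved under reduction, but here I can use the orthonormal system from Remark \ref{rem:orthogonal system}: $L$ admits derivations $D'_1,\dots,D'_m$ and elements $g_1,\dots,g_m\in A$ with $D'_i(g_j)=\delta_{ij}$ and $L=\sum_i A\,D'_i$, and these relations are preserved modulo $\frkm$, so $\overline{D'_i}(\overline{g_j})=\delta_{ij}$ in $A\otimes_R\kappa$. Hence an element of $A\otimes_R\kappa$ annihilated by all of $L\otimes_R\kappa$ lies in the subring annihilated by the $\overline{D'_i}$; by the structure of $A\otimes_R\kappa$ as a purely inseparable height-one $\kappa$-algebra together with the orthonormality relations (this is exactly the content of \cite[Theorem 3.5]{Ree}, applied over the field $\kappa$), such an element must be a constant in $\kappa$. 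This gives (WR2).

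Condition (WR3) is where I expect the main obstacle. The center of $L\otimes_R\kappa$ need not be controlled by the center of $L$ — reduction can create new central elements — so I cannot simply invoke center-freeness of $L$. Instead I would work through the enveloping algebra: by Lemma \ref{lem:WR central simple}(1), $L^e=\End_{R\text{-lin}}(L)$, and tensoring with $\kappa$ gives $(L\otimes_R\kappa)^e=L^e\otimes_R\kappa=\End_{R\text{-lin}}(L)\otimes_R\kappa=\End_{\kappa\text{-lin}}(L\otimes_R\kappa)$. By the equivalence recorded in Remark \ref{rem:WR central simple}(1) (equality of the enveloping algebra with the full endomorphism algebra is equivalent, for a center-free Lie algebra over a field, to central simplicity), it remains only to check that $L\otimes_R\kappa$ is center-free. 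For this I would argue that $(L\otimes_R\kappa)^e=\End_{\kappa\text{-lin}}(L\otimes_R\kappa)$ already forces the center to be trivial: a nonzero central element would be killed by every $[D,-]$ and hence would span a proper two-sided-stable subspace incompatible with the enveloping algebra being the full matrix algebra acting irreducibly — more precisely, $L\otimes_R\kappa$ is a module over the simple algebra $\End_{\kappa\text{-lin}}(L\otimes_R\kappa)$, so it is a faithful irreducible module and the centralizer of its image is just $\kappa$, which by \cite[Lemma 1.1]{Waterhouse} (or \cite[Chapter X,\S1]{Jacobson}) is exactly the statement that $L\otimes_R\kappa$ is central simple with trivial center. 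This closes (WR3) and completes the proof.
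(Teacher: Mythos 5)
Your reduction to the local case, your check of (WR1), and your treatment of (WR3) are sound, and the (WR3) part is essentially the paper's own argument: the paper likewise tensors the identity $L^e=\End_{R\text{-lin}}(L)$ from Lemma \ref{lem:WR central simple}(1) up to the fiber (working over $\overline{\kappa(\frkp)}$ rather than $\kappa(\frkp)$) and invokes the Jacobson/Waterhouse criterion to conclude central simplicity.

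The genuine gap is in your argument for (WR2). From the orthonormal system you retain only the relations $\overline{D}'_i(\overline{g}_j)=\delta_{ij}$ and claim that these, together with the structure of $A\otimes_R\kappa$ as a purely inseparable height-one $\kappa$-algebra, force the common constants of the $\overline{D}'_i$ to be $\kappa$. That implication is false, and it is not the content of \cite[Theorem 3.5]{Ree}: as used in Remark \ref{rem:orthogonal system}, that theorem is an existence statement over a completely primary ring, producing elements $f_j$ with $\Det(\overline{D}_i(f_j))$ invertible; it does not say that an orthonormal system has only trivial constants. Concretely, take $\overline{A}=\kappa[x,y]/(x^p,y^p)$, $m=1$, $\overline{D}'_1=\partial/\partial x$, $\overline{g}_1=x$: the system is orthonormal, yet the constants are $\kappa[y]/(y^p)\supsetneq\kappa$. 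This is exactly the shape of situation relevant to the paper (the $D$-dimension can be smaller than the number of generators of $A$, e.g.\ $W(1;n)$ with $n\ge 2$), so (WR2) for the reduction cannot follow from orthonormality alone; you must use a property of $L\otimes\kappa(\frkp)$ beyond it. The paper's route is to establish central simplicity of $L\otimes\overline{\kappa(\frkp)}$ first and then apply \cite[Lemma 3.2]{Ree}, which says that the ring of constants $C$ of such a simple Lie algebra of derivations is an integral domain; since $C$ contains $\overline{\kappa(\frkp)}$ and, having no nilpotents, injects into $(A\otimes\overline{\kappa(\frkp)})_{\rm red}=\overline{\kappa(\frkp)}$, one gets $C=\overline{\kappa(\frkp)}$, and (WR2) over $\kappa(\frkp)$ follows by intersecting with $A\otimes\kappa(\frkp)$. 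Note also that passing to the algebraic closure is not just convenience: over $\kappa(\frkp)$ itself $(A\otimes\kappa(\frkp))_{\rm red}$ may be a nontrivial purely inseparable extension of $\kappa(\frkp)$, so your plan of working entirely over $\kappa$ would need an additional argument even after the main gap is repaired.
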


\begin{proof}
By Lemma \ref{lem:WR central simple}(1), we have $L^e=\End_{R\text{-lin}}(L)$, which implies that 
\[
\left(L\otimes\overline{\kappa({\frkp})}\right)^e=\End_{\overline{\kappa(\frkp)}\text{-lin}}\left(L\otimes\overline{\kappa(\frkp)}\right).
\]  
Therefore, $L\otimes\overline{\kappa(\frkp)}$ is central simple over $\overline{\kappa(\frkp)}$ (cf.\cite[Chapter X,\S1]{Jacobson}). Moreover, if $C\subset A\otimes\overline{\kappa}(\frkp)$ denotes the subalgebra of constants for $L\otimes\overline{\kappa(\frkp)}$, then by \cite[Lemma 3.2]{Ree}, $C$ is an integral domain. Hence, $\overline{\kappa(\frkp)}\subseteq C\subseteq\left(A\otimes\overline{\kappa(\frkp)}\right)_{\rm red}=\overline{\kappa(\frkp)}$. This proves that only the constants $\overline{\kappa(\frkp)}\cdot 1$ insides $A\otimes\overline{\kappa(\frkp)}$ are annihilated by all $D\in L\otimes\overline{\kappa(\frkp)}$. This completes the proof. 
\end{proof}

\begin{lem}\label{lem:Aut WR fp}
Let $R$ be a $k$-algebra and $A$ a purely inseparable $R$-algebra of height one. Let $L\subset\Der_R(A)$ be a Witt--Ree $R$-algebra on $A$. Then the automorphism $R$-group scheme $\uAut_{R}(L)$ of the Lie algebra $L$ over $R$ admits a faithful representation $\rho\colon\uAut_R(L)\hookrightarrow\GL_{N,R}$ with some $N>0$, and $\uAut_R(L)$ is an affine $R$-group scheme locally of finite presentation. 
\end{lem}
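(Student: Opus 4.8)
The plan is to produce the faithful representation by exhibiting $L$ together with a suitable auxiliary datum as a finite free $R$-module with a distinguished generating subset that any Lie automorphism must respect. First I would recall from Lemma \ref{lem:WR central simple}(1) (equivalently, from Remark \ref{rem:WR central simple}(1)) that the enveloping $R$-algebra $L^e\subseteq\End_{R\text{-lin}}(L)$ equals all of $\End_{R\text{-lin}}(L)$. Since $L$ is a finite free $A$-module by (WR1) and $A$ is a finite free $R$-module (being purely inseparable of height one over $R$), $L$ is a finite free $R$-module, say of rank $N$; fix an $R$-basis and use it to identify $\End_{R\text{-lin}}(L)$ with the matrix algebra $M_N(R)$. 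Now any $\sigma\in\uAut_R(L)(S)$, for a commutative $R$-algebra $S$, is an $S$-linear automorphism of $L\otimes_RS$ preserving the bracket, so it acts on $\End_{S\text{-lin}}(L\otimes_RS)=M_N(S)$ by conjugation, and this conjugation action carries $\ad(D)=[D,-]$ to $\ad(\sigma(D))$. The assignment $\sigma\mapsto\sigma$, viewed as an element of $\GL(L)\cong\GL_{N,R}$, is visibly a faithful closed immersion of functors once we show it is represented by a closed subscheme of $\GL_{N,R}$.

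For that, I would write down equations cutting out $\uAut_R(L)$ inside $\GL_{N,R}$. Picking an $R$-basis $e_1,\dots,e_N$ of $L$ with structure constants $c_{ij}^{\ell}\in R$ determined by $[e_i,e_j]=\sum_\ell c_{ij}^\ell e_\ell$, a matrix $g=(g_{ij})\in\GL_N(S)$ lies in $\uAut_R(L)(S)$ precisely when $g[e_i,e_j]=[ge_i,ge_j]$ for all $i,j$, which is a finite system of polynomial equations in the entries $g_{ij}$ (and in $\det(g)^{-1}$, already accounted for by working inside $\GL_{N,R}$). This exhibits $\uAut_R(L)$ as a closed subgroup scheme of $\GL_{N,R}$, hence as an affine $R$-scheme; and $\GL_{N,R}$ being of finite presentation over $R$ together with the equations being finitely many polynomials with coefficients in $R$ shows $\uAut_R(L)$ is locally of finite presentation over $R$. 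The representation $\rho$ is faithful because $g$ is literally recovered as the matrix of $\sigma$ on $L$.

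The one subtlety — and the step I expect to require the most care — is to make sure that the representation is on a genuinely finite free module, i.e.\ that $L$ is finite free over $R$ rather than merely projective or finite free over $A$ only. This is where (WR1) is used in conjunction with the structure of $A$: a purely inseparable $R$-algebra of height one is by definition $R[x_1,\dots,x_n]/(x_1^p-a_1,\dots,x_n^p-a_n)$, which is finite free over $R$ with the evident monomial basis, so a finite free $A$-module is indeed finite free over $R$. Once this is in hand, everything else is the standard fact that the automorphism functor of a finite free module equipped with a tensor (here the bracket, a map $L\otimes_R L\to L$) is representable by a closed subscheme of $\GL$; note that for this representability we do not even need condition (WR2) or (WR3), only the finiteness in (WR1) — those conditions (via Lemma \ref{lem:WR central simple}) will matter for later structural results, not for this lemma. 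I would state the lemma's proof in this order: reduce to $L$ finite free over $R$, identify $\uAut_R(L)$ with the closed subfunctor of $\GL_{N,R}$ cut out by the bracket-compatibility equations, and conclude affineness and local finite presentation.
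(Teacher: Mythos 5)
Your proposal is correct and matches the paper's approach: the paper simply observes that $L$ is finite free over $R$ and cites \cite[Lemma 2.3]{GP}, which is exactly the standard structure-constant argument you write out explicitly (and you rightly note that only (WR1) plus the freeness of $A$ over $R$ is needed, not (WR2)--(WR3); the opening appeal to Lemma \ref{lem:WR central simple} and the conjugation action on $\End$ is superfluous but harmless).
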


\begin{proof}
As $L$ is a finite free module over $R$, this is immediate from \cite[Lemma 2.3]{GP}. 
\end{proof}

\begin{lem}\label{lem:Aut WR fppf}
Let $(R,\frkm)$ be a Noetherian local integral domain over $k$ with fraction field $K$ and residue field $\kappa$. Let $A$ be a purely inseparable $R$-algebra of height one. Let $L\subset\Der_R(A)$ be a Witt--Ree $R$-algebra on $A$ of $D$-dimension $m$. Suppose that there exists an $m$-tuple of positive integers $\bm{n}\in\Z_{>0}^m$ such that $L\otimes_R\overline{K}\simeq W(m;\bm{n})\otimes_k\overline{K}$ and $L\otimes_{R}\overline{\kappa}\simeq W(m;\bm{n})\otimes_k\overline{\kappa}$ as Lie algebras over algebraic closures. Then the automorphism $R$-group scheme $\uAut_{R}(L)$ is an affine flat $R$-group scheme of finite presentation. Similarly for the $R$-subgroup scheme $\uAut_R(A,L)$ of the automorphism group scheme $\uAut_R(A)$ consisting of automorphisms of $A$ which preserve the Lie subalgebra $L\subset\Der_R(L)$\end{lem}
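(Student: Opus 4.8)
## Proof Proposal for Lemma \ref{lem:Aut WR fppf}

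The plan is to establish flatness and finite presentation by a fibre-wise criterion, reducing everything to the two geometric fibres whose structure is given by hypothesis. First I would invoke Lemma \ref{lem:Aut WR fp} to know that $\uAut_R(L)$ (and likewise $\uAut_R(A,L)$, which is a closed subgroup scheme of $\uAut_R(A)$, itself affine of finite presentation since $A$ is finite free over $R$) is an affine $R$-group scheme locally of finite presentation, realized inside some $\GL_{N,R}$. The real content is flatness over the Noetherian local base $(R,\frkm)$. Since $R$ is a Noetherian local integral domain with fraction field $K$, and $\uAut_R(L)$ is $R$-flat precisely when it is torsion-free as an $\sO_R$-module on each affine chart, the key point is to compare the generic fibre $\uAut_R(L)\otimes_R K$ with the special fibre $\uAut_R(L)\otimes_R\kappa$.

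The heart of the argument is a dimension count. By Theorem \ref{thm:triangulation} together with Theorem \ref{thm:Aut W}, over any algebraically closed field $\ell$ of characteristic $p>3$ one has $\dim_\ell\uAut_\ell(W(m;\bm{n})\otimes_k\ell) = \dim G^+ + \dim G^0 + \dim G^- = \dim G^+ + m^2 + 0$, a number $d(m;\bm{n})$ depending only on $m$ and $\bm{n}$, not on $\ell$. The hypotheses $L\otimes_R\overline{K}\simeq W(m;\bm{n})\otimes_k\overline{K}$ and $L\otimes_R\overline{\kappa}\simeq W(m;\bm{n})\otimes_k\overline{\kappa}$ therefore force
\[
\dim\bigl(\uAut_R(L)\otimes_R\overline{K}\bigr) \;=\; d(m;\bm{n}) \;=\; \dim\bigl(\uAut_R(L)\otimes_R\overline{\kappa}\bigr),
\]
so the generic and special fibres of $\uAut_R(L)\to\Spec R$ have equal dimension. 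Now I would apply the standard criterion for flatness of a finitely presented scheme over a Noetherian \emph{integral} local base: if $X\to\Spec R$ is of finite type with $R$ a Noetherian local domain, $X$ is flat over $R$ as soon as $X$ is $R$-torsion-free, and for a group scheme one reduces this to checking that no irreducible component of $X$ is contained in the special fibre. The equidimensionality of the two fibres, combined with the fact that $\uAut_R(L)\otimes_R K$ is smooth (characteristic-zero-style argument is unavailable, but over the field $K$ the automorphism group scheme of the central simple Lie algebra $L\otimes_R K$ — central simple by Lemma \ref{lem:WR central simple}(2) — is in any case a finite-type $K$-group scheme of the expected dimension $d(m;\bm{n})$), rules out an embedded or vertical component over $\frkm$; hence $\uAut_R(L)$ is $R$-flat, and being locally of finite presentation with affine (hence quasi-compact) source, it is of finite presentation. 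The same argument verbatim applies to $\uAut_R(A,L)$, using that over $\overline{K}$ and $\overline{\kappa}$ it is identified via Theorem \ref{thm:Aut W} with the full automorphism group of $W(m;\bm{n})$ and hence again has fibre dimension independent of the point.

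The main obstacle I anticipate is making the fibre-dimension comparison rigorous without smoothness: a priori $\uAut_R(L)$ need not be $R$-smooth (it is a positive-characteristic automorphism scheme and may be non-reduced), so one cannot simply quote generic flatness plus ``constant fibre dimension $\Rightarrow$ flat'' in the smooth setting. The way around this is to use the algebraic fact that $\uAut_R(L)$, as a closed subscheme of $\GL_{N,R}$ cut out by the equations expressing that a matrix preserves the Lie bracket, has coordinate ring a quotient $\sO(\GL_{N,R})/J$, and $R$-flatness is equivalent to $J$ being generated by polynomials whose coefficient ideal behaviour is controlled on fibres; concretely one shows $\operatorname{Tor}_1^R(\sO(\uAut_R(L)),\kappa)=0$ by checking that the Hilbert function (equivalently, the fibre dimension) of $\sO(\uAut_R(L))\otimes_R\kappa$ matches that of $\sO(\uAut_R(L))\otimes_R K$, which is exactly the equidimensionality established above. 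An alternative, perhaps cleaner, route is to spread out: write $R=\varinjlim R_i$ as in the proof of Corollary \ref{cor3:triangulation}, descend $L$ to a Witt--Ree $R_i$-algebra $L_i$ over some finitely generated smooth $k$-algebra $R_i$, apply generic flatness over the integral $k$-scheme $\Spec R_i$ to $\uAut_{R_i}(L_i)$, shrink $\Spec R_i$ to a flatness locus, and use that $\Spec R\to\Spec R_i$ factors through this locus because the special point of $\Spec R$ maps into it — here the hypothesis $L\otimes_R\overline{\kappa}\simeq W(m;\bm{n})\otimes_k\overline{\kappa}$ is what guarantees the image of $\frkm$ lies in the good locus. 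Either way, once flatness is in hand, finite presentation is automatic from Lemma \ref{lem:Aut WR fp}.
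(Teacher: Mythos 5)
There is a genuine gap: the flatness criteria you invoke do not exist in the generality you need. Torsion-freeness of the coordinate ring over a Noetherian local domain implies flatness only when the base has dimension one (valuation/Dedekind case), and equality of the dimensions of the generic and special fibres does not imply flatness either (nor does it give $\operatorname{Tor}_1^R(\sO(\uAut_R(L)),\kappa)=0$; Hilbert functions of the fibres and fibre dimensions are not equivalent data, and you compute neither). The fibrewise criterion that does exist (SGA3, Expos\'e VI${}_B$) requires the fibres to be \emph{smooth} of locally constant dimension, and this is exactly what fails here: the fibres of $\uAut_R(L)$ are twisted forms of $G(m;\bm{n})$, which is non-smooth because of the infinitesimal factor $G^-\simeq\prod_iW_{n_i(1)}$ in the triangulation, so $\Dim\Lie$ strictly exceeds the fibre dimension and no such criterion applies directly to $\uAut_R(L)$. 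Your spreading-out alternative has the same problem in disguised form: generic flatness produces \emph{some} dense open flat locus in $\Spec R_i$, and the hypothesis that $L\otimes_R\overline{\kappa}$ is abstractly isomorphic to $W(m;\bm{n})\otimes_k\overline{\kappa}$ gives no reason why the image of the closed point of $\Spec R$ should land in that locus — that is essentially the statement to be proved, not an input.

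The missing idea, which is how the paper proceeds, is a d\'evissage along the relative Frobenius $F\colon G\to G^{(p)}$ for $G=\uAut_R(L)$. The kernel $G_{(1)}$ is a \emph{finite} $R$-group scheme, and for a finite module over a Noetherian local domain the naive rank count does work: the hypotheses identify $G_{(1)}$ over $\overline{K}$ and over $\overline{\kappa}$ with $\Gamma(m;\bm{n})$, so $\Dim_K R[G_{(1)}]\otimes_RK=\Dim_\kappa R[G_{(1)}]\otimes_R\kappa$ and $R[G_{(1)}]$ is free; hence $G_{(1)}$ is finite flat and the projection $\pi\colon G\to H\Def G/G_{(1)}$ (representable by SGA3, Expos\'e V) is flat. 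The fibres of $H$, unlike those of $G$, are twisted forms of the smooth group $G^+\rtimes G^0=G(m;\bm{n})/\Gamma(m;\bm{n})$; combining the semicontinuity of fibre dimension with the semicontinuity of $\Dim\Lie$ (Gille--Pianzola) shows every fibre of $H$ is smooth of the same dimension $d=\Dim(G^+\rtimes G^0)$, so the fibrewise smoothness criterion applies to $H$ and gives $H$ smooth, hence flat, over $R$; flatness of $G$ follows by composing $G\to H\to\Spec R$. Your dimension comparison via Theorem \ref{thm:triangulation} is the right raw material, but without splitting off the Frobenius kernel it cannot be converted into flatness.
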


\begin{proof}
We will prove the claim only for $\uAut_R(L)$, but the same argument proves the claim for $\uAut_R(A,L)$.   
To ease the notation, we put $G\Def \uAut_R(L)$. 
By the previous lemma, $G$ is an affine $R$-group scheme of finite presentation. Therefore, it suffices to show that $G$ is flat over $R$. As $G$ is of finite presentation, the relative Frobenius morphism $F\colon G\to G^{(p)}$ is a finite morphism and its kernel $G_{(1)}\Def\Ker (F)$ is a finite $R$-group scheme. 

We claim that $G_{(1)}$ is flat over $R$. Indeed, by the assumption that $L\otimes_R\overline{K}\simeq W(m;\bm{n})\otimes_k\overline{K}$, we have an isomorphism of $\overline{K}$-group schemes
\[
G_{(1)}\otimes_R\overline{K}= (G\otimes_R\overline{K})_{(1)}\simeq (G(m;\bm{n})\otimes_k\overline{K})_{(1)}\simeq\Gamma(m;\bm{n})\otimes_k\overline{K}.
\]
Here, recall that $\Gamma(m;\bm{n})=G(m;\bm{n})_{(1)}$ (cf.\ Section \ref{sec:autom}). Similarly, we have $G_{(1)}\otimes_R\overline{\kappa}\simeq\Gamma(m;\bm{n})\otimes_k\overline{\kappa}$. This implies that 
\[
\Dim_{K}R[G_{(1)}]\otimes_RK=\Dim_kk[\Gamma(m;\bm{n})]=\Dim_{\kappa}R[G_{(1)}]\otimes_R\kappa,
\]
where $R[G_{(1)}]=\Gamma(G_{(1)},\scrO_{G_{(1)}})$ is the coordinate ring of the finite $R$-group scheme $G_{(1)}$. This implies that the finite $R$-module $R[G_{(1)}]$ is projective. Hence, by \cite[Lemma 02KB]{stack}, $G_{(1)}$ is a finite flat $R$-group scheme. 

By \cite[Expos\'e V, Corollaire 10.1.3]{SGA3}, the fppf sheafification of the functor $A\mapsto G(A)/G_{(1)}(A)$ is representable by an $R$-group scheme $H$ of finite presentation and the map $F\colon G\to G^{(p)}$ factors as
\[
\xymatrix{
G\ar[rd]_{\pi}\ar[rr]^{F}&&G^{(p)}\\
&H\ar[ru]_{\iota}&
}
\]
where $\pi$ is the natural projection and $\iota$ is a monomorphism. Moreover, this factorization is stable under base change. By the flatness of $G_{(1)}$, the morphism $\pi\colon G\to H$ is flat. Therefore, to prove the flatness of $G$, we have only to show that $H$ is flat over $R$. Indeed, we can prove that $H$ is a smooth over $R$. By the exact sequence (\ref{eq:cor triangulation}) in the proof of Corollary \ref{cor:triangulation}, we have that both of the generic fiber $H\otimes_RK$ and the special fiber $H\otimes_R\kappa$ are twisted forms of the smooth $k$-group scheme $G(m;\bm{n})/\Gamma(m;\bm{n})=G^+\rtimes G^0$. Let $d\Def\Dim G^+\rtimes G^0$. Let $\mathfrak{p}\in\Spec R$ be an arbitrary point. By \cite[Expos\'e VI${}_B$, Proposition 4.1]{SGA3}, the subset
\[
S_{\mathfrak{p}}\Def\{\mathfrak{q}\in\Spec R\,|\,\Dim H\otimes\kappa(\mathfrak{q})\ge \Dim{H\otimes{\kappa(\mathfrak{p})}}\}
\]
is a closed subset of $\Spec R$. As $\mathfrak{p}\in S_{\mathfrak{p}}$, we have $\overline{\{\mathfrak{p}\}}\subseteq S_{\mathfrak{p}}$.We have $\mathfrak{m}\in\overline{\{\mathfrak{p}\}}\subseteq S_{\mathfrak{p}}$, hence $d=\Dim H\otimes\kappa\ge\Dim H\otimes\kappa(\mathfrak{p})$. Similarly, we have $\Dim H\otimes\kappa(\mathfrak{p})\ge \Dim H\otimes K=d$. Therefore, we have $\Dim H\otimes\kappa(\mathfrak{p})=d$. On the other hand, by the semicontinuity property for Lie algebras \cite[Lemma 2.9]{GP}, we can also find  that $\Dim_{\kappa(\mathfrak{p})} \Lie (H\otimes\kappa(\mathfrak{p}))=d\,(=\Dim H\otimes\kappa(\mathfrak{p}))$. This proves that $H\otimes\kappa(\mathfrak{p})$ is a smooth $\kappa(\mathfrak{p})$-group scheme. As $\mathfrak{p}\in\Spec R$ is arbitrary, by \cite[Expos\'e VI${}_B$, Corollaire 4.4]{SGA3}, we can conclude that $H$ is a smooth $R$-group scheme. This completes the proof. 
\end{proof}

Now we prove the next result.  

\begin{prop}\label{prop:Aut WR}
Suppose that $k=\overline{k}$ is an algebraically closed field of characteristic $p> 3$. 
Let $R$ be an integral domain over $k$ with fraction field $K$ and $L\subset\Der_R(A)$ a Witt--Ree $R$-algebra of $D$-dimension $m$ on a purely inseparable $R$-algebra $A$ of height one. We denote by $\uAut_R(A,L)$ the $R$-subgroup scheme of the automorphism group scheme $\uAut_R(A)$ consisting of automorphisms of $A$ which preserve the Lie subalgebra $L\subset\Der_R(L)$. Suppose that there exists an $m$-tuple of positive integers $\bm{n}\in\Z_{>0}^m$ such that $L\otimes_R\overline{K}\simeq W(m;\bm{n})\otimes_k\overline{K}$ and $L\otimes_{R}\overline{\kappa}\simeq W(m;\bm{n})\otimes_k\overline{\kappa}$ as Lie algebras over algebraic closures. 
The natural map 
\[
\uAut_R(A,L)\lto\uAut_R(L)~;~\varphi\mapsto \varphi_*=\varphi\circ(-)\circ\varphi^{-1}
\]
to the automorphism group scheme $\uAut_R(L)$ of the $R$-Lie algebra $L$ is an isomorphism of $R$-group schemes.  
\end{prop}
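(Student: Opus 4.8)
The plan is to reduce the statement to the case of a Noetherian local base and then to apply the fibrewise criterion for isomorphisms, using Waterhouse's Theorem \ref{thm:Aut W} on the fibres. First, whether the morphism $f\colon\uAut_R(A,L)\to\uAut_R(L)$, $\varphi\mapsto\varphi_*$, of affine $R$-group schemes is an isomorphism may be checked Zariski-locally on $\Spec R$; by a standard limit argument (the pair $(A,L)$ together with its Lie bracket and the Witt--Ree axioms being defined over a finitely generated, hence Noetherian, $k$-subalgebra of $R$) one reduces to the case where $R$ is Noetherian, and then, by localizing at a maximal ideal, to the case where $(R,\frkm)$ is a Noetherian local integral domain with residue field $\kappa$.

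The next step is to see that both $\uAut_R(A,L)$ and $\uAut_R(L)$ are affine, flat and of finite presentation over $R$. By Lemma \ref{lem:WR reduction} and Proposition \ref{prop:WR base change}, for every prime $\frkp\subset R$ the reduction $L\otimes_R\overline{\kappa(\frkp)}$ is a Witt--Ree algebra of $D$-dimension $m$ over the algebraically closed field $\overline{\kappa(\frkp)}$; since by hypothesis this is the generalized Witt algebra $W(m;\bm{n})\otimes_k\overline{K}$ at the generic point, the classification of Witt--Ree algebras due to Ree and Wilson shows that at every $\frkp$ it is isomorphic, as a Lie algebra of derivations on a purely inseparable height-one algebra, to $\bigl(A(m;\bm{n}_{\frkp}),W(m;\bm{n}_{\frkp})\bigr)\otimes_k\overline{\kappa(\frkp)}$ for some $\bm{n}_{\frkp}\in\Z_{>0}^m$. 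Since $L$ is free of rank $m$ over $A$ and $A$ is free of some fixed rank $p^{N}$ over $R$, the integer $\sum_i(\bm{n}_{\frkp})_i=N$ does not depend on $\frkp$; consequently the dimension counts occurring in the proof of Lemma \ref{lem:Aut WR fppf} — which depend only on $m$ and $N$ — remain valid with $\bm{n}$ replaced by $\bm{n}_{\frkp}$, and that proof carries over to give the required flatness and finite presentation of both group schemes.

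Finally, by the fibrewise criterion for isomorphisms — a morphism between $R$-schemes that are flat and locally of finite presentation over $R$ is an isomorphism as soon as it induces an isomorphism on every fibre (cf.\ \cite{stack}) — it suffices to prove that $f\otimes_R\kappa(\frkp)$ is an isomorphism for each $\frkp\in\Spec R$. By faithfully flat descent along $\kappa(\frkp)\hookrightarrow\overline{\kappa(\frkp)}$ one may pass to the algebraic closure, where, by the choice of $\bm{n}_{\frkp}$, the pair $\bigl(A\otimes\overline{\kappa(\frkp)},\,L\otimes\overline{\kappa(\frkp)}\bigr)$ is isomorphic to $\bigl(A(m;\bm{n}_{\frkp}),W(m;\bm{n}_{\frkp})\bigr)\otimes_k\overline{\kappa(\frkp)}$, so that $f\otimes\overline{\kappa(\frkp)}$ is identified with the natural map $\uAut(A(m;\bm{n}_{\frkp}),W(m;\bm{n}_{\frkp}))\to\uAut(W(m;\bm{n}_{\frkp}))$; since $p>3$, this is an isomorphism by Waterhouse's Theorem \ref{thm:Aut W}. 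I expect the real obstacle to lie in the flatness step of the second paragraph: one must ensure that the conclusion of Lemma \ref{lem:Aut WR fppf} propagates to every residue field of $R$, which is precisely where the Ree--Wilson classification and the constancy of $\sum_i(\bm{n}_{\frkp})_i$ are needed; once flatness is secured, the fibrewise reduction to Waterhouse's field-theoretic result is formal.
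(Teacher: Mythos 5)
Your proposal is correct and follows essentially the same route as the paper: establish flatness and finite presentation of both group schemes via (an adaptation of) Lemma \ref{lem:Aut WR fppf}, apply the fibrewise isomorphism criterion, and conclude on geometric fibres using Lemma \ref{lem:WR reduction}, Wilson's classification of Witt--Ree algebras over algebraically closed fields, and Waterhouse's Theorem \ref{thm:Aut W}. Your additional reduction to a Noetherian local base and the observation that the dimension counts depend only on $m$ and $\sum_i n_i$ merely make explicit how the flatness lemma applies in the stated generality; the core argument is the same.
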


\begin{proof}
By the previous lemma, the group schemes in both sides are flat and of finite presentation over $R$. Therefore, by \cite[Expos\'e I, Proposition 5.7; Expos\'e VIII, Corollaire 5.4]{SGA1}, it suffices to check that for any prime ideal $\frkp$ of $R$, the induced homomorphism of group schemes
\[
\uAut_{\overline{\kappa(\frkp)}}\left(A\otimes\overline{\kappa(\frkp)},L\otimes\overline{\kappa(\frkp)}\right)\lto\uAut_{\overline{\kappa(\frkp)}}\left(L\otimes\overline{\kappa(\frkp)}\right)
\]
is an isomorphism over the algebraic closure $\overline{\kappa(\frkp)}$. By Lemma \ref{lem:WR reduction}, $L\otimes\overline{\kappa(\frkp)}$ is a Witt--Ree $\overline{\kappa(\frkp)}$-algebra on $A\otimes\overline{\kappa(\frkp)}$. Hence, the claim follows from Wilson's classification theorem for Witt--Ree algebras over  algebraically closed fields \cite[Theorem 1]{Wilson} and Waterhouse's Theorem \ref{thm:Aut W}.
\end{proof}

\begin{rem}
For the application to the rationality problem, we need to figure out which Lie algebra over the base ring $R$ is a twisted form of the generalized Witt algebra $W(m;\bm{n})\otimes_kR$ (cf.\ Proposition \ref{prop:ret rat BG(m,n)}). In the case where the base ring $R=K$ is a field, a complete classification of twisted forms of the generalized Witt algebra $W(m;\bm{n})\otimes_kK$ is achieved by the series of works due to Ree\cite{Ree}, Wilson\cite{Wilson} and Waterhouse \cite{Waterhouse}. They proved that a Lie algebra $L$ over $K$ is a twisted form of $W(m;\bm{n})\otimes_kK$ if and only if it is a Witt--Ree algebra over $K$ of the same type $(m;\bm{n})$. However, the author is not sure if this classification theory can be extended to an arbitrary base ring $R$. Proposition \ref{prop:Aut WR} shows that our Witt--Ree algebras over $R$ satisfy one of necessary conditions to be a twisted form of $W(m;\bm{n})$ under the mild assumption.  
\end{rem}


\subsection{Retract rationality for the generalized Witt algebras}\label{sec:RP Witt}

The goal of this subsection is to prove the next theorem.

\begin{thm}\label{thm:ret rat BGamma(m,n)}
Suppose that $k=\overline{k}$ is an algebraically closed field of characteristic $p>3$. If $\bm{n}=\bm{1}$ or $m=1$, 
the classifying space $B\Gamma(m;\bm{n})$ of the finite simple group scheme $\Gamma(m;\bm{n})$ is retract rational over $k$. 
\end{thm}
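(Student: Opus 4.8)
The plan is to reduce the retract rationality of $B\Gamma(m;\bm{n})$ to the torsor-lifting statement of Proposition~\ref{prop:ret rat BG(m,n)}. Since $p>3$, Corollary~\ref{cor:triangulation} tells us that $BG(m;\bm{n})$ and $B\Gamma(m;\bm{n})$ are stably birationally equivalent, and retract rationality is an invariant of the stable birational class (Remark~\ref{rem:def ret rat}); hence it suffices to prove that $BG(m;\bm{n})$ is retract rational.

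For this I would use the characterization of retract rationality for classifying spaces in Proposition~\ref{prop:ret rat}. Write $X\Def BG(m;\bm{n})$, a smooth connected $k$-variety, and $K\Def k(X)$ for its function field. By the equivalence of conditions (a) and (g) in Proposition~\ref{prop:ret rat}, it is enough to show that for every $k$-algebra homomorphism $\alpha\colon k[x_1,\dots,x_N]\to K$ with $\Frac(\im(\alpha))=K$ and $\frkP\Def\Ker(\alpha)$, the natural map
\[
H^1_{\fppf}\bigl(k[x_1,\dots,x_N]_{\frkP},\,G(m;\bm{n})\bigr)\lto H^1_{\fppf}\bigl(K,\,G(m;\bm{n})\bigr)
\]
is surjective. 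The local ring $R\Def k[x_1,\dots,x_N]_{\frkP}$ has residue field $R/\frkP R\cong\Frac\bigl(k[x_1,\dots,x_N]/\frkP\bigr)\cong K$, so this is exactly the reduction map of Proposition~\ref{prop:ret rat BG(m,n)}, once one arranges that the dimension hypothesis on $\Omega^1_{K/k}$ is satisfied.

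To arrange that hypothesis, I would exploit the freedom in the construction of $BG(m;\bm{n})$: it can be built from any generically free representation of $G(m;\bm{n})$, and its retract rationality does not depend on the choice (Remark~\ref{rem:def ret rat}); enlarging the representation (e.g.\ replacing it by a direct sum of copies of itself, which preserves generic freeness), we may assume $\Dim X\ge n_1+\cdots+n_m$. Since $X$ is smooth over the algebraically closed, hence perfect, field $k$, the finitely generated extension $K/k$ is separably generated, so $\Dim_K\Omega^1_{K/k}$ equals the transcendence degree of $K$ over $k$, which is $\Dim X\ge n_1+\cdots+n_m$. When $\bm{n}=\bm{1}$ or $m=1$, Proposition~\ref{prop:ret rat BG(m,n)} now yields the surjectivity above, and Proposition~\ref{prop:ret rat} then gives that $BG(m;\bm{n})$, and therefore $B\Gamma(m;\bm{n})$, is retract rational.

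All of the real work --- and the reason for the restriction $\bm{n}=\bm{1}$ or $m=1$ --- is concentrated in Proposition~\ref{prop:ret rat BG(m,n)}, whose proof I expect to be the main obstacle. There one must lift a $G(m;\bm{n})$-torsor over $K$, equivalently (by the Ree--Wilson--Waterhouse classification recalled after Proposition~\ref{prop:Aut WR}) a Witt--Ree algebra of type $(m;\bm{n})$ over $K$, to a torsor over the local ring $R$; the triangulation of Theorem~\ref{thm:triangulation} (via the exact sequence in the proof of Corollary~\ref{cor:triangulation}) should reduce this to lifting a $\Gamma(m;\bm{n})$-torsor, while the base-change and descent results for Witt--Ree algebras in Section~\ref{sec:Witt-Ree} (Propositions~\ref{prop:WR base change}, \ref{prop:WR descent} and \ref{prop:Aut WR}) let one transport the classification from $K$ down to $R$. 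The inequality $\Dim_K\Omega^1_{K/k}\ge n_1+\cdots+n_m$ is what supplies enough $p$-independent elements of $R$ to construct the lift, and the two special cases $\bm{n}=\bm{1}$ and $m=1$ are presumably where the structure of the relevant forms is explicit enough to make the construction work.
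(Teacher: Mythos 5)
Your proposal is correct and follows essentially the same route as the paper: reduce to $BG(m;\bm{n})$ via Corollary \ref{cor:triangulation} and stable birational invariance, enlarge the generically free representation so that $\Dim_{\kappa}\Omega^1_{\kappa/k}\ge n_1+\cdots+n_m$ for $\kappa=k(BG(m;\bm{n}))$, and conclude from Proposition \ref{prop:ret rat BG(m,n)} via the torsor-lifting characterization of Proposition \ref{prop:ret rat} (the paper invokes condition (f) where you use (g), an immaterial difference).
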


 As the retract rationality is stably birational invariant (cf.\ Remark \ref{rem:def ret rat}), thanks to Corollary \ref{cor:triangulation}, it suffices to prove the retract rationality for $BG(m;\bm{n})$. Moreover, by replacing the generically free representation $\rho\colon G(m;\bm{n})\to\GL_V$ with a higher dimensional generically free representation $\rho'\colon G(m;\bm{n})\to\GL_{V'}$, we may assume that the transcendental degree of $k(BG(m;\bm{n}))$ over $k$ is so large that the condition that 
\[
\mathrm{dim}_{\kappa}\,\Omega_{\kappa/k}^1\ge n_1+\cdots+n_m
\] 
in the next proposition is satisfied for $\kappa=k(BG(m;\bm{n}))$. 
By Proposition \ref{prop:ret rat}(f), the theorem is a consequence of the next result. 

\begin{prop}\label{prop:ret rat BG(m,n)}
Suppose that $k=\overline{k}$ is an algebraically closed field of characteristic $p>3$. Let $(R,\frkm)$ be the localization of a polynomial ring $k[X_1,\dots,X_N]$ at a prime ideal $\frkP\subset k[X_1,\dots,X_N]$ with residue field $\kappa\Def R/\frkm$ with $\Dim_{\kappa}\Omega^1_{\kappa/k}\ge n_1+\cdots+n_m$. If $\bm{n}=\bm{1}$ or $m=1$, the natural map 
\[
H^1_{\fppf}(R,G(m;\bm{n}))\lto H^1_{\fppf}(\kappa,G(m;\bm{n}))
\] 
is surjective. 
\end{prop}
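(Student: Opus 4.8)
The plan is to produce, for each class in $H^1_{\fppf}(\kappa,G(m;\bm{n}))$, an explicit lift to $H^1_{\fppf}(R,G(m;\bm{n}))$ built from the Witt--Ree description of twisted forms. First I would invoke the classification of Ree, Wilson and Waterhouse: since $p>3$, a class in $H^1_{\fppf}(\kappa,G(m;\bm{n}))$ is the isomorphism class of a twisted form $L_0$ of $W(m;\bm{n})\otimes_k\kappa$, and such an $L_0$ is a Witt--Ree $\kappa$-algebra of type $(m;\bm{n})$, i.e.\ a Lie subalgebra $L_0\subset\Der_\kappa(A_0)$ with $A_0$ a purely inseparable $\kappa$-algebra of height one, $\Dim_k A_0=p^{N}$, $N\Def n_1+\cdots+n_m$. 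In the two cases under consideration this has a very concrete shape: for $\bm{n}=\bm{1}$ one has $A_0=\kappa[y_1,\dots,y_m]/(y_i^p-a_i)$ with $a_i\in\kappa$ and $L_0=\Der_\kappa(A_0)$ is the full derivation algebra; for $m=1$ one has $A_0=\kappa[y_0,\dots,y_{n-1}]/(y_s^p-b_s)$ equipped with a divided-power structure of depth $n$, and $L_0=A_0 D$ for the associated special derivation $D$, which in normal form satisfies $D(y_s)=(\text{unit})\prod_{t<s}y_t^{p-1}$ up to lower-order, unipotent and torus adjustments (the $G^+$, $G^0$ parts of $\Aut$).

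Next I would lift this data to $R$. Using the hypothesis $\Dim_\kappa\Omega^1_{\kappa/k}\ge N$ together with the freeness of $\Omega^1_{R/k}$, choose lifts $\widetilde a_j$ (resp.\ $\widetilde b_s$) in $R$ of the inseparable parameters; the remaining coefficients (the units and the unipotent/torus part of $D$, all living in the \emph{special} factors $G^+\rtimes G^0$) lift freely since $R\twoheadrightarrow\kappa$. This produces a purely inseparable $R$-algebra $\widetilde A$ of height one and a Lie $R$-subalgebra $\widetilde L\subset\Der_R(\widetilde A)$ reducing to $(A_0,L_0)$ modulo $\frkm$. The key point is that the finite free --- hence fppf --- cover $R'\Def R[(\widetilde a_j)^{1/p}:j]$ splits $\widetilde A$: after the substitution $w_j=y_j-(\widetilde a_j)^{1/p}$ one gets $\widetilde A\otimes_R R'\simeq A(m;\bm{n})\otimes_k R'$ as $R'$-algebras with divided powers, and under this isomorphism $\widetilde L\otimes_R R'$ is carried onto $W(m;\bm{n})\otimes_k R'$; hence $\widetilde L$ is an fppf twisted form of $W(m;\bm{n})\otimes_k R$. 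To turn this into a class in $H^1_{\fppf}(R,G(m;\bm{n}))$ I would use Proposition \ref{prop:WR descent} to see that $\widetilde L$ is a Witt--Ree $R$-algebra of type $(m;\bm{n})$, and Proposition \ref{prop:Aut WR} (its hypotheses being met: flatness and finite presentation by Lemma \ref{lem:Aut WR fppf}, and $\widetilde L\otimes_R\overline{K}\simeq W(m;\bm{n})\otimes_k\overline{K}$, $\widetilde L\otimes_R\overline{\kappa}\simeq W(m;\bm{n})\otimes_k\overline{\kappa}$ by the previous sentence and the hypothesis on $L_0$) to identify $\uAut_R(\widetilde L)\simeq\uAut_R(\widetilde A,\widetilde L)\simeq G(m;\bm{n})\otimes_k R$.

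With these identifications in place $\widetilde L$ defines a class $[\widetilde L]\in H^1_{\fppf}(R,G(m;\bm{n}))$, and since $\widetilde L\otimes_R\kappa\simeq L_0$ by construction, $[\widetilde L]$ maps to the originally given class under $H^1_{\fppf}(R,G(m;\bm{n}))\to H^1_{\fppf}(\kappa,G(m;\bm{n}))$; this gives the asserted surjectivity. (If one prefers, one may first reduce, via the exact sequence $1\to\Gamma(m;\bm{n})\to G(m;\bm{n})\to(G^+\rtimes G^0)^{(p)}\to1$ and the speciality of $G^+\rtimes G^0$, to the surjectivity of $H^1_{\fppf}(R,\Gamma(m;\bm{n}))\to H^1_{\fppf}(\kappa,\Gamma(m;\bm{n}))$, and run the same construction, noting that $\widetilde L$ produces a height-one form.)

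The step I expect to be the main obstacle is the relative normal-form assertion in the second paragraph: that over the splitting cover $R'$ the lifted Lie algebra $\widetilde L\otimes_R R'$ is \emph{exactly} the standard copy of $W(m;\bm{n})\otimes_k R'$ inside $\Der_{R'}$, and not merely some Witt--Ree $R'$-subalgebra of the right rank of a different type. For $\bm{n}=\bm{1}$ this is automatic because both sides are the full derivation algebra of a split height-one $R'$-algebra. For $m=1$ with $n\ge 2$, however, one must check that the lifted special derivation $\widetilde D$ retains depth exactly $n$ after passing to $R'$; this is an explicit computation with the divided-power / Artin--Hasse formulas of Remark \ref{rem:G^-} together with Wilson's congruence $(p-1)!\equiv-1\pmod p$, and it is precisely here that the choice of lifts $\widetilde a_j$ afforded by $\Dim_\kappa\Omega^1_{\kappa/k}\ge n_1+\cdots+n_m$ is used to keep the Witt--Ree axioms over $R$ under control, so that the field-level arguments of Ree, Wilson and Waterhouse --- which require enough independent inseparable parameters to be present --- can be propagated over $R$.
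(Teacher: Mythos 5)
Your case $\bm{n}=\bm{1}$ is essentially the paper's argument (lift the inseparable parameters $a_i$ of the height-one algebra; no more is needed), but the case $m=1$, $n\ge 2$ has a genuine gap exactly at the step you yourself flag. You claim that after passing to $R'=R[\widetilde a_j^{1/p}]$ the isomorphism $\widetilde A\otimes_RR'\simeq A(1;n)\otimes_kR'$ carries $\widetilde L\otimes_RR'$ onto the standard copy $W(1;n)\otimes_kR'$. Trivializing the underlying purely inseparable algebra does not trivialize the Lie algebra: a form of $W(1;n)$ is the pair $(A, A\,D)$, the cocycle lives in $G(1;n)=\uAut(A(1;n),W(1;n))$ and not in $\uAut(A(1;n))$, and for $n\ge 2$ there are non-split forms whose underlying algebra is split --- e.g.\ Ree's ``multiplicative'' models $L(B;\partial)$ with $B=\kappa[y_0,\dots,y_{n-1}]/(y_s^p-1)$ (a split truncated polynomial algebra) and $\partial(y_s)=\gamma_s y_s$ with $p$-independent $\gamma_s$; these only become isomorphic to $W(1;n)$ after a further extension. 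So the ``derivation part'' of the cocycle survives your base change, and the proposed fix (an Artin--Hasse computation showing $\widetilde D$ ``retains depth $n$'') does not address it; likewise your asserted normal form $D(y_s)=(\text{unit})\prod_{t<s}y_t^{p-1}$ up to unipotent and torus adjustments over $\kappa$ presupposes the form is already nearly split, which is what has to be proved. A related symptom: you use the hypothesis $\Dim_\kappa\Omega^1_{\kappa/k}\ge n_1+\cdots+n_m$ ``to choose lifts $\widetilde a_j$,'' but lifting elements along the surjection $R\twoheadrightarrow\kappa$ needs no hypothesis at all, so in your argument the hypothesis is never genuinely used.

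For comparison, the paper's proof of the $m=1$ case lifts the pair: it takes arbitrary lifts of $A$ and of $D=\sum_ib_i\delta_i$ to $(\widetilde A,\widetilde D)$, checks the Witt--Ree axioms (WR1)--(WR3) for $\widetilde L=\widetilde A\,\widetilde D$ by flatness/Nakayama arguments, and then proves that $\widetilde L$ is a form of $W(1;n)\otimes_kR$ by an fpqc argument over the integral closure $\overline R$ of the completion $\widehat R$ in an algebraic closure of its fraction field: using Waterhouse's Theorems A and B over $\overline\kappa$ it produces a separable (toral) element $E\in L\otimes\overline\kappa$ whose eigenvalues are the $\F_p$-span of elements $\gamma_{p^s}\in\kappa$ with linearly independent differentials --- this is precisely where the hypothesis on $\Omega^1_{\kappa/k}$ enters --- lifts $E$ to $\widetilde E\in\widetilde L\otimes\overline R$, decomposes $\widetilde A\otimes\overline R$ into eigenspaces, normalizes eigenvectors to satisfy $u_i^p=1$, and invokes Ree's Theorem 6.10 over the algebraically closed field $\overline K_0$ to identify $\widetilde L\otimes\overline R$ with $W(1;n)\otimes\overline R$. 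Your proposal is missing an argument playing the role of this last chain, and without it the surjectivity does not follow.
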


\begin{proof}[Proof of Proposition \ref{prop:ret rat BG(m,n)} in the case where $\bm{n}=\bm{1}$] We first prove the proposition in the case when $\bm{n}=\bm{1}$. In this case, Theorem \ref{thm:Aut W} implies that there exists a natural isomorphism 
\[
\uAut(A(m;\bm{1}))=\uAut(A(m;\bm{1}),W(m;\bm{1}))\xrightarrow{~\simeq~}G(m;\bm{1})
\]
of $k$-group schemes (cf.\cite{Waterhouse0}). Therefore, it suffices to show that the natural map 
\[
H^1_{\fppf}(R,\uAut(A(m;\bm{1})))\to H^1_{\fppf}(\kappa,\uAut(A(m;\bm{1})))
\] 
is surjective.  
Each element $\xi\in H^1_{\fppf}(\kappa,\uAut(A(m;\bm{1})))$ is represented by a $\kappa$-form $A$ of $A(m;\bm{1})$, which is isomorphic to a purely inseparable $\kappa$-algebra of height one (cf.\cite{Waterhouse0}). There exist elements $a_1,\dots,a_m\in \kappa$ such that
\begin{equation*}
A\simeq \kappa[x_1,\dots,x_m]/(x_1^p-a_1,\dots,x_m^p-a_m). 
\end{equation*}
By taking lifts $\widetilde{a}_i\in R$ of $a_i\in \kappa$, we get an $R$-algebra
\[
\widetilde{A}\Def R[x_1,\dots,x_m]/(x_1^p-\widetilde{a}_1,\dots,x_m^p-\widetilde{a}_m),
\] 
which defines a class $\xi\Def [\widetilde{A}]\in H^1_{\fppf}(R,\uAut(A(m;\bm{1})))$ whose image in $H^1_{\fppf}(\kappa,\uAut(A(m;\bm{1})))$ coincides with $\xi$. This completes the proof. 
\end{proof}

\begin{proof}[Proof of Proposition \ref{prop:ret rat BG(m,n)} in the case where $m=1$] 
We prove the proposition in the case when $m=1$. Let $L$ be a Lie algebra over $\kappa$ which is a twisted form of $W(1;n)\otimes{\kappa}$. We have to show that there exists a Lie algebra $\widetilde{L}$ over $R$ such that $\widetilde{L}$ is a twisted form of $W(1;n)\otimes R$ satisfying $\widetilde{L}\otimes_R\kappa\simeq L$. By \cite[Theorem A]{Waterhouse}, $L$ is isomorphic to a Witt--Ree algebra of $D$-dimension $1$ with $\Dim_{\kappa}L=p^n$ on a purely inseparable $\kappa$-algebra $A=\kappa[x_1,\dots,x_n]/(x_1^p-a_1,\dots,x_n^p-a_n)$ of height one (cf.\,Definition \ref{def:Witt-Ree}). More precisely, there exists a derivation $D\in  \Der_{\kappa}(A)$ which satisfies the following conditions.
\begin{enumerate}
\renewcommand{\labelenumi}{(\roman{enumi})}
\item $fD=0$ with $f\in A$ implies that $f=0$.
\item $D(f)=0$ with $f\in A$ implies that $f\in\kappa$. 
\item the Lie subalgebra $L(A;D)\Def A\,D\subset\Der_{\kappa}(A)$ is a central simple Lie algebra over $\kappa$. 
\end{enumerate}
In addition, there exists an isomorphism $L\simeq L(A;D)$ of Lie algebras over $\kappa$. 
Therefore, we may assume that $L=L(A;D)$. First we fix any lifting $\widetilde{A}=R[x_1,\dots,x_n]/(x_1^p-\widetilde{a}_1,\dots,x_n^p-\widetilde{a}_n)$ of the purely inseparable algebra $A$ as in the case when $\bm{n}=\bm{1}$. Let $\pi\colon \widetilde{A}\twoheadrightarrow A=\widetilde{A}\otimes_R\kappa$ be the canonical surjective map. It suffices to show that there exists a lifting  $\widetilde{D}\in\Der_R(\widetilde{A})$ of the derivation $D\in\Der_{\kappa}(A)$ such that the Lie subalgebra $\widetilde{L}\Def L(\widetilde{A};\widetilde{D})\Def\widetilde{A}\, \widetilde{D}\subset\Der_R(\widetilde{A})$ is a twisted form of $W(1;n)\otimes R$.

Let $\frkM\subset A$ be the maximal ideal of $A$. Let $\widetilde{\frkM}\Def\pi^{-1}(\frkM)\subset\widetilde{A}$. Then $\widetilde{A}$ is a local ring with maximal ideal $\widetilde{\frkM}$. For each $1\le i\le n$, let $\delta_i\in\Der_R(\widetilde{A})$ be the derivation satisfying $\delta_i(x_j)=\delta_{ij}$. We have $\Der_{R}(\widetilde{A})=\sum_{i=1}^n\widetilde{A}\cdot\delta_i$ with free $\widetilde{A}$-basis $\{\delta_1,\dots,\delta_n\}$. Similarly, we have $\Der_{\kappa}(A)=\sum_{i=1}^nA\cdot\delta_i$ with free $A$-basis $\{\delta_1,\dots,\delta_n\}$. Let $D=\sum_{i=1}^mb_i\delta_i$ with $b_i\in A$. For each $i$, let $\widetilde{b}_i\in\widetilde{A}$ be any lift of $b_i$. By the condition (i) together with \cite[Lemma 3.4]{Ree}, we can find that $b_i\in A^*$ for some $1\le i\le n$. This implies that $\widetilde{b}_i\in \widetilde{A}\setminus\widetilde{\frkM}=\widetilde{A}^*$. Therefore, the derivation $\widetilde{D}\Def\sum_{i=1}^n\widetilde{b}_i\delta_i\in\Der_R(\widetilde{A})$ satisfies the condition (WR1) (cf.\ Section \ref{sec:Witt-Ree}). 

We can also see that the condition (WR2) in Definition \ref{def:Witt-Ree} is satisfied for $\widetilde{L}$. 
Indeed, by the condition (ii) for $D$, we have an exact sequence of $\kappa$-vector spaces
\[
0\lto\kappa\lto A\xrightarrow{~D~}D(A)\lto 0.
\]
This implies that $D(A)\subset A$ is a $\kappa$-subspace of $A$ with $\Dim_{\kappa}D(A)=p^n-1$. Hence, $A$ admits a basis $\{v_j\}_{j=1}^{p^n}$ such that 
\[
D(A)=\sum_{j=1}^{p^n-1}\kappa\cdot v_j.
\]
On the other hand, for any $f\in A$ and any lift $\widetilde{f}\in\widetilde{A}$ of $f$, we have $\pi(\widetilde{D}(\widetilde{f}))=D(f)$. This implies that $\pi(\widetilde{D}(\widetilde{A}))=D(A)$, i.e.\,$\widetilde{D}(\widetilde{A})\otimes\kappa=D(A)$. Thus we can take a minimal basis $\{\widetilde{v}_j\}_{j=1}^{p^n}$ of $\widetilde{A}$ with $\pi(\widetilde{v}_j)=v_j$ so that
\[
\widetilde{D}(\widetilde{A})=\sum_{j=1}^{p^n-1}R\cdot \widetilde{v}_j.
\]
As $\widetilde{A}$ is a free $R$-module of rank $p^n$, the minimal basis $\{\widetilde{v}_j\}_{j=1}^{p^n}$ is linearly independent over $R$. Therefore, $\widetilde{D}(\widetilde{A})$ is a free $R$-module of rank $p^n-1$. The exact sequence
\[
0\lto\Ker(\widetilde{D})\lto \widetilde{A}\xrightarrow{~\widetilde{D}~}\widetilde{D}(\widetilde{A})\lto 0
\]
then tells us that $\Ker(\widetilde{D})$ is a flat $R$-module, which implies that $\Ker(\widetilde{D})\otimes\kappa=\Ker(D)=\kappa$. By Nakayama's lemma, we can conclude that $\Ker(\widetilde{D})=R$ as desired.

Next we will discuss the condition (WR3) for $\widetilde{L}$. By condition (iii), $\widetilde{L}\otimes\kappa=L$ is central simple. Therefore, it suffices to show that $\widetilde{L}$ is center free. Indeed, for any $f,g\in A$ and any lifts $\widetilde{f},\widetilde{g}\in\widetilde{A}$ of $f,g$ respectively, we have
\[
ad(\widetilde{f}\widetilde{D})(\widetilde{g}\widetilde{D})=(\widetilde{f}\widetilde{D}(\widetilde{g})-\widetilde{g}\widetilde{D}(\widetilde{f}))\widetilde{D},
\]
hence $\pi(ad(\widetilde{f}\widetilde{D})(\widetilde{g}\widetilde{D}))=(fD(g)-gD(f))D=ad(fD)(gD)$. This implies that the reduction map $\End_{R\text{-lin}}(\widetilde{L})\twoheadrightarrow\End_{R\text{-lin}}(\widetilde{L})\otimes_R\kappa=\End_{\kappa\text{-lin}}(L)$ maps $ad(\widetilde{L})$ surjecitvely onto $ad(L)$. Therefore, by the same argument for the condition (WR2), we can show that $Z(\widetilde{L})=\Ker(ad\colon\widetilde{L}\to\End_{R\text{-lin}}(\widetilde{L}))$ is flat over $R$ with $Z(\widetilde{L})\otimes\kappa=0$. Again by Nakayama's lemma this implies that $Z(\widetilde{L})=0$ as desired.

We have seen that $\widetilde{L}$ is a Witt--Ree $R$-algebra on $\widetilde{A}$. It remains to prove that $\widetilde{L}$ is a twisted form of the generalized Witt algebra $W(1;n)\otimes_kR$. As $\uAut(W(1;n)\otimes_kR)=G(1;n)\otimes_kR$ is flat and of finite presentation over $R$, it suffices to show that $\widetilde{L}$ is a twisted form of $W(1;n)\otimes_kR$ in the fpqc topology (cf.\ \cite[Lemmas 02L0 and 02L2]{stack}). We identify the completion of the regular local ring $R$ with a ring of formal power series, i,e.\ $\widehat{R}=\kappa[[T_1,\dots,T_l]]$ and define $\overline{R}$ to be the integral closure of $\widehat{R}$ in an algebraic closure $\overline{\Frac\widehat{R}}$ of the fraction field $\Frac\widehat{R}$. As $\Spec\overline{R}\to\Spec R$ is a fpqc covering of $\Spec R$ (cf.\cite[03NV]{stack}), it suffices to show that $\widetilde{L}\otimes_R\overline{R}$ is isomorphic to $W(1;n)\otimes_k\overline{R}$. Let $\gamma_1,\gamma_p,\dots,\gamma_{p^{n-1}}\in\kappa$ be elements so that $d\gamma_1,d\gamma_p,\dots,d\gamma_{p^{n-1}}\in\Omega^1_{\kappa/k}$ are linearly independent over $\kappa$. We can take such elements thanks to the condition that $\Dim_{\kappa}\Omega^1_{\kappa/k}\ge n$. Let 
\[
B\Def{\overline{\kappa}[y_1,y_p,\dots,y_{p^{n-1}}]}/{(y_1^p-1,y_p^p-1,\dots,y_{p^{n-1}}^p-1)}
\]
and we define the derivation $\partial\in\Der_{\overline{\kappa}}(B)\subset\End_{\overline{\kappa}\text{-lin}}(B)$ by setting 
\[
\partial(y_{p^s})=\gamma_{p^s}y_{p^s}\quad (0\le s\le n-1). 
\]
Then, $L(B;\partial)\Def B\,\partial\subset\Der_{\overline{\kappa}}(B)$ is a Witt--Ree $\overline{\kappa}$-algebra on the purely inseparable algebra of height one $B$. Hence, by \cite[Theorems A and B]{Waterhouse}, there exists an isomorphism of $\overline{\kappa}$-algebras $\varphi\colon B\xrightarrow{\simeq}A\otimes\overline{\kappa}$ such that the conjugation $\varphi_*=\varphi\circ (-)\circ\varphi^{-1}$ by $\varphi$ gives an isomorphism of Lie algebras 
\[
\varphi_*\colon L(B;\partial)\xrightarrow{~\simeq~}L\otimes\overline{\kappa}.
\]
By construction, the characteristic polynomial $\varphi_E(t)=\varphi_{\partial}(t)\in\overline{\kappa}[t]$ of the derivation $E\Def\varphi_*(\partial)=\varphi\circ\partial\circ\varphi^{-1}\in L\otimes\overline{\kappa}\subset\Der_{\overline{\kappa}}(A\otimes\overline{\kappa})\subset\End_{\overline{\kappa}\text{-lin}}(A\otimes\overline{\kappa})$ is a separable polynomial of degree $p^n$. 
The roots of $\varphi_{E}(t)$ form an $n$-dimensional $\F_p$-vector subspace $\sum_{s=0}^{n-1}\F_p\gamma_{p^s}$ in $\overline{\kappa}$.

Let $\widetilde{E}\in\widetilde{L}\otimes_R\overline{R}$ be any lift of $E$. Then the characteristic polynomial $\varphi_{\widetilde{E}}(t)\in \overline{R}[t]$ is a separable polynomial of degree $p^n$. 
For each $0\le s\le n-1$, we denote by $\lambda_{p^s}\in\overline{R}$ the root of $\varphi_{\widetilde{E}}(t)$ whose image in $\overline{\kappa}$ coincides with $\gamma_{p^s}$. Moreover, for each $0\le i<p^n$, if $i=i_0+i_1p+\cdots+i_{p^{n-1}}p^{n-1}$ is the $p$-adic expansion of $i$, we set 
\[
\lambda_i\Def i_0\lambda_1+i_1\lambda_p+\cdots+i_{p^{n-1}}\lambda_{p^{n-1}}
\]
and let $W_i\subseteq \widetilde{A}\otimes_R\overline{R}$ be the eigenspace with respect to the characteristic root $\lambda_i$. Then each $W_i$ is a free module of rank $1$ and we have the decomposition
\[
\widetilde{A}\otimes_R\overline{R}=\sum_{i=0}^{p^n-1}W_i.
\]
As the decomposition is compatible with the decomposition of $A\otimes\overline{\kappa}$ into the eigenspaces with respect to the operator $E$, by \cite[Lemma 6.1]{Ree}, for each $i$, there exists an element $u_i\in W_i\cap(\widetilde{A}\otimes_R\overline{R})^*$ with $u_0=1$.  By condition (WR2) for $\widetilde{L}$, we have $u_i^p\in \overline{R}^*$ for any $i$. Therefore, by multiplying invertible elements of $\overline{R}$, we may assume that $u_i^p=1$ for any $i$. Note also that $\lambda_i\in \overline{R}^*$ for $i\neq 0$. As $\gamma_1,\gamma_p,\dots,\gamma_{p^{n-1}}\in\kappa$ are algebraically independent, $\overline{R}$ contains the fraction field $K_0\Def k(\lambda_1,\lambda_p,\dots,\lambda_{p^{n-1}})$ of the subring $k[\lambda_1,\lambda_p,\dots,\lambda_{p^{n-1}}]$ (cf.\ Proof of \cite[Lemma 28.3(iii)]{Matsumura}), hence contains its algebraic closure $\overline{K}_0$ as well. 
If we define the $K_0$-subalgebra $\widetilde{B}\subset\widetilde{A}\otimes_R\overline{R}$ to be 
\[
\widetilde{B}\Def \overline{K}_0[u_1,u_p,\dots,u_{p^{n-1}}],
\]
then $\widetilde{E}\in\Der_{\overline{K}_0}(\widetilde{B})$ and we have $\widetilde{L}\otimes_R\overline{R}=L(\widetilde{B};\widetilde{E})\otimes_{\overline{K}_0}\overline{R}$. However, by \cite[Theorem 6.10]{Ree}, the Witt--Ree $\overline{K}_0$-algebra $L(\widetilde{B};\widetilde{E})=\widetilde{B}\,\widetilde{E}$ is isomorphic to  $W(1;n)\otimes_k \overline{K}_0$. This shows that $\widetilde{L}\otimes_R\overline{R}$ is isomorphic to $W(1;n)\otimes_k\overline{R}$, which completes the proof of the proposition. 
\end{proof}

\begin{DA}
Not Applicable. 
\end{DA}

\begin{COI}
The author has no conflict of interest to declare that are relevant to this article.
\end{COI}


\end{document}